\numberwithin{equation}{section}
\newtheorem{thm}{Theorem}[section]
\newtheorem*{thm*}{Theorem}
\newtheorem{lem}[thm]{Lemma}
\newtheorem{prop}[thm]{Proposition}
\newtheorem{cor}[thm]{Corollary}
\theoremstyle{definition}
\newtheorem{defn}[thm]{Definition}
\theoremstyle{remark}
\newtheorem{rem}[thm]{Remark}
\newcommand{\fr}{\penalty-20\null\hfill$\blacksquare$}
\newcommand{\defeq}{\vcentcolon=}
\def\Xint#1{\mathchoice
	{\XXint\displaystyle\textstyle{#1}}%
	{\XXint\textstyle\scriptstyle{#1}}%
	{\XXint\scriptstyle\scriptscriptstyle{#1}}%
	{\XXint\scriptscriptstyle\scriptscriptstyle{#1}}%
	\!\int}
\def\XXint#1#2#3{{\setbox0=\hbox{$#1{#2#3}{\int}$}
		\vcenter{\hbox{$#2#3$}}\kern-.5\wd0}}
\def\dashint{\Xint-}
\newcommand{\mres}{\mathbin{\vrule height 1.6ex depth 0pt width 0.13ex\vrule height 0.13ex depth 0pt width 1.3ex}}
\newcommand{\RCD}{{\mathrm {RCD}}}
\newcommand{\XX}{{\mathsf{X}}}
\newcommand{\YY}{{\mathsf{Y}}}
\newcommand{\dist}{{\mathsf{d}}}
\newcommand{\mass}{{\mathsf{m}}}
\newcommand{\LIP}{{\mathrm {LIP}}}
\newcommand{\BV}{{\mathrm {BV}}}
\newcommand{\lip}{{\mathrm {lip}}}
\newcommand{\DIFF}{{\mathrm{D}}}
\newcommand{\QQ}{{\mathbb{Q}}}
\newcommand{\RR}{{\mathbb{R}}}
\newcommand{\ZZ}{{\mathbb{Z}}}
\newcommand{\NN}{{\mathbb{N}}}
\newcommand{\HH}{{\mathcal{H}}}
\renewcommand{\SS}{{\mathcal{S}}}
\newcommand{\per}{{\mathrm {Per}}}
\newcommand{\nchi}{{\raise.3ex\hbox{$\chi$}}}
\let\phi\varphi
\let\epsilon\varepsilon
\DeclareMathOperator*{\essinf}{ess\,inf}
\title{Maps of bounded variation \\from PI spaces to metric spaces}
\begin{document}
\author{Camillo Brena\footnote{
{camillo.brena@sns.it}, Scuola Normale Superiore, Piazza dei Cavalieri 7, 56126 Pisa, Italy.},
\,Francesco Nobili\footnote{
{francesco.f.nobili@jyu.fi}, Department of Mathematics and Statistics, P.O.\ Box 35 (MaD), FI-40014 University of Jyv\"askyl\"a, Finland.},
\,and Enrico Pasqualetto\footnote{
{enrico.e.pasqualetto@jyu.fi}, Department of Mathematics and Statistics, P.O.\ Box 35 (MaD), FI-40014 University of Jyv\"askyl\"a, Finland.}}
\date{
}
\maketitle

\begin{abstract}
      We study maps of bounded variation defined on a metric measure space and valued into a metric space. Assuming the source space to satisfy a doubling and Poincar\'e property, we produce a well-behaved relaxation theory via approximation by simple maps. Moreover, several equivalent characterizations are given, including a notion in weak duality with test plans.
\end{abstract}

\noindent\textbf{MSC(2020).} 53C23 (primary); 26A45, 26B30, 49J52, 30L99\\
\textbf{Keywords.} Map of bounded variation, PI space, simple map, test plan
\tableofcontents

\section{Introduction}
In the classical Euclidean setting, there are many equivalent characterizations of a real-valued function of bounded variation in addition to the standard distributional notion. To mention two relevant ones, the existence of equi-bounded  (in the energy sense) smooth approximations and that one-dimensional restrictions are of bounded variation and satisfy integral energy bounds. We refer to \cite{AmbrosioFuscoPallarabook} for this classical theory.

It was later understood that the two just mentioned approaches, suitably adapted, make perfect sense on an arbitrary metric measure space $(\XX,\dist,\mass)$, i.e.\ a metric space $(\XX,\dist)$ equipped with a reference measure $\mass$. The challenge being to re-interpret these characterizations in metric terms and cook up new strategies to prove their equivalence, ranging from measure theory to gradient flows and optimal transportation. The picture is rather complete by now: \cite{MirandaJr03} introduced a definition of real-valued functions of bounded variation
on a metric measure space via relaxation; later on, \cite{AmbrosioDiMarino14} proposed another definition in the spirit of
`checking the behaviour of functions along curves' (formulated in terms of the so-called test plans) and proved its equivalence
with Miranda's approach; in \cite{DiMarino14,DiMarinoPhD} a new definition via integration-by-parts formulas (using derivations), as well
as its equivalence with the previous two approaches was studied; finally, in \cite{Martio16,Martio16-2} the concept of
AM-modulus and the relative curvewise notion of BV-function were introduced; the equivalence with the previous three approaches
was first partially proved in \cite{DCEBKS19}, then the full equivalence result was obtained in \cite{NobiliPasqualettoSchultz21}, thus closing the circle.

If one assumes some sort of regularity of the underlying space, such as a doubling \& Poincar\'e property (we refer to those as PI spaces, see Section \ref{sec:prelim}), then it is possible to further study the space of BV-functions (motivated by the analogous Sobolev theory referring, e.g., to \cite{HajlaszKoskela00,HeinonenKoskelaShanmugalingam15, Bjorn-Bjorn11} and references therein).  In this situation, the picture looks much closer to the Euclidean one and finer properties can be deduced, see \cite{Ambrosio2001,Ambrosio2002,AmbrosioMirandaPallara04,KinnunenKorteShanmugaligam14} and  \cite{LTpointwise}  that are particularly relevant for us. 
\medskip

The aim of this note is to study \emph{maps of bounded variation}. These naturally arise in a variety of situations, such as $\Gamma$-convergence problems in the calculus of variations (see, e.g., \cite{ModicaMortola77,Baldo90,ambmetric}, in the study of cartesian currents (see, e.g., \cite{GiaquintaModicaSoulcek98I,GiaquintaModicaSoulcek98II}), liftings of manifold-valued maps (see, e.g., \cite{GiaquintaModicaSoulcek98I,DavilaIgnat03,Ignat05,Canevari17,IgnatLamy19,CanevariOrlandi20}) and also bi-Lipschitz nonembeddability \cite{CheegerKleiner10}.

We focus on maps valued into an arbitrary separable metric space $(\YY,\rho)$:
\[
u\colon \XX\to\YY.
\]
It is important to stress here that the work \cite{MirandaJr03} actually deals with Banach-valued maps of bounded variation. However, if one wants -- as in this note -- to replace a Banach target with a \emph{non-linear} structure, such as a Riemannian manifold or more generally a metric space, then the situation becomes more delicate.

A first attempt would be to embed the target space in a linear space (as done in \cite{GiaquintaMucci06,GiaquintaMucci06I,GiaquintaMucci07,GiaquintaMucci_Errata}) for smooth manifold target spaces via Nash embedding. This can be in principle done also in the generality of this manuscript, considering isometric embeddings of separable metric targets into a suitable Banach space. However, we are interested in studying a theory that is \emph{intrisic} rather than developing an \emph{extrinsic} theory depending on a chosen embedding. Moreover, we found no real advantage in exploiting the extrinsic procedure.

\medskip

Starting from \cite{Reshetnyak97,Reshetnyak04}, 
 after \cite{ambmetric}, it has been understood that an intrinsic way to discuss metric-valued calculus is by looking at post-compositions with Lipschitz functions of the target. In particular, given a (indeed, any) notion of $\BV(\XX)$ and total variation measure $|\DIFF f|$ for real-valued functions $f \in L^1(\XX)$, we can declare for $u \in L^1(\XX,\YY)$ the following:
\begin{equation}
u \in \BV(\XX,\YY)\quad \text{provided} \quad \varphi \circ u \in \BV(\XX)\text{ and } |\DIFF (\varphi\circ u)|\le \mu,
\label{eq:Reshetnyak intro}
\end{equation}
for some finite Borel measure $\mu$ and for all $1$-Lipschitz functions $\varphi \colon \YY\to \RR$ (see Definition \ref{def:Du Resh}). Then, set $|\DIFF u|$ as the minimal measure $\mu$ for the above to hold. This approach has the advantage of being the most general possible, as it requires no regularity of $\XX$ nor of $\YY$. 
\medskip

More sophisticated approaches that are particularly relevant for this work have been investigated, at different levels of generality, in \cite{ambmetric,KorevaarSchoen93}. In \cite{ambmetric}, a notion of maps of bounded variation defined on the Euclidean space and valued into a metric space has been given via relaxation with simple maps. Then, equivalent characterizations and further properties are investigated. Mimicking \cite{ambmetric}, we can define
\begin{equation}
    \label{eq:BV* intro}
u \in \BV^*(\XX,\YY)\quad\text{provided} \quad \exists \, u_n \colon \XX\to \YY \quad\text{locally simple s.t.}\quad \begin{array}{l}
      u_n \to u \text{ in }L^1_{\rm loc},\\
     \sup_n|\DIFF u_n|(\XX)<\infty,
\end{array}
\end{equation}
where a map is called `simple' provided it is of finite range in $\YY$ and where $|\DIFF u_n|$ is defined via \eqref{eq:Reshetnyak intro}. We point out that, if the target space $\YY$ is totally disconnected, then asking for the existence of Lipschitz approximations of a map $u$ (as for the real-valued case) is not an effective requirement. On the other hand, maps attaining locally a finite number of values are suitable for approximations even when dealing with arbitrary metric targets. As in the previous case, the definition above comes with a notion of total variation on open sets, that we call $V_u$, see \eqref{cdsacdas}, which turns out to be induced by a Borel measure (still denoted by $V_u$) whenever the source space has a doubling and Poincar\'e property, see Theorem \ref{thm:main result 1} extending \cite{ambmetric}.
\medskip 

In \cite{KorevaarSchoen93}, directional and total energies of Sobolev and bounded variation maps have been studied for maps defined on manifolds with Ricci curvature lower bounds and valued in an arbitrary metric space (see also \cite{LogaritschSpadaro12,IgnatLamy19}). The work \cite{KorevaarSchoen93} reveals that it is possible to develop, even when both source and target spaces are non-linear, a theory of maps of bounded variation that ultimately looks at one-dimensional restrictions along flow-curves driven by Lipschitz vector fields. For what concerns Sobolev calculus, recently this theory has been generalized to non-smooth source spaces with synthetic Ricci lower bounds in \cite{GigliTyulenev21Dir,GT21}. These works motivate our investigation around BV-maps but, with the aim of covering more general source spaces and treat less regular functions, our results will be closer in spirit to the weak-BV approach for functions developed in \cite{AmbrosioDiMarino14}.  To this aim, we shall enforce the requirement of `1-dimensional restrictions are BV', with the concept of test plans \cite{AmbrosioGigliSavare11,AmbrosioGigliSavaredensity}. Recall that an $\infty$-test plan is a probability measure on the space of continuous paths, i.e.\ $\pi \in \mathcal{P}(C([0,1],\XX))$, that has bounded compression and is concentrated on equi-Lipschitz curves; denote ${\rm Comp}(\pi)$ and ${\rm Lip}(\pi)$ its compression and Lipschitz constant, respectively (see Section \ref{sec:prelim} for the precise definition of test plans). We can then define
\begin{equation}
    u \in \BV_w(\XX,\YY)\quad \text{provided}   \quad \begin{array}{l}
        \text{for any \(\infty\)-test plan \(\pi\) on \(\XX\), }\,u\circ \gamma \text{ is BV for }\pi\text{-a.e.\ }\gamma, \\
         \int |\DIFF (u\circ \gamma)|(\gamma^{-1}(B))\,\dd\pi(\gamma)\le {\mathrm {Comp}}(\pi) {\mathrm {Lip}}(\pi)\mu(B),
    \end{array}\label{eq:BV weak intro}
\end{equation}
for some finite measure $\mu$ (independent of $\pi$), for any $B\subseteq \XX$ Borel  (see 
 Definition \ref{def:Du weak} for the above). Set $|\DIFF u|_w$ the smallest measure $\mu$.
\medskip

Before discussing the main results of this note, we point out that when the target space is $\YY=\RR$ (but the source space $\XX$ is an arbitrary metric measure space), all the above notions do coincide with the one introduced by \cite{MirandaJr03}. More precisely, $|\DIFF u|$, $V_u$ and $|\DIFF u|_w$ coincide exactly with the notion of total variation defined in \cite{MirandaJr03}. This follows easily from standard arguments, the equivalence result of \cite{AmbrosioDiMarino14} (to deal with $|\DIFF u|_w$) and the approximation scheme in \cite[proof of Proposition 4.2]{MirandaJr03} (to deal with $V_u$).

\subsection*{Statement of results}
To be more precise, here a PI space is a uniformly locally doubling metric measure space supporting a local $(1,1)$-Poincar\'e inequality (see Section \ref{sec:prelim}). The main goal of this work is:
\begin{center}
    to study \eqref{eq:Reshetnyak intro}-\eqref{eq:BV* intro}-\eqref{eq:BV weak intro} and show their equivalence when $\XX$ is a PI space.
\end{center}
Even though all these approaches make perfect sense on an arbitrary metric measure source space, {we require $\XX$ to be PI to produce} a well-behaved relaxation theory in \eqref{eq:BV* intro} as well as to address the proof of the harder implications among these three approaches. { It is unclear whether, in full generality, all these approaches coincide.}

Indeed, while \eqref{eq:Reshetnyak intro}-\eqref{eq:BV weak intro} come automatically with a notion of total variation measure, an important step in our investigation is to show that there is an underlying measure which can be obtained localizing on open subsets the relaxation \eqref{eq:BV* intro}. For every open set $U\subseteq \XX$ define
\begin{equation}\label{cdsacdas}
V_u(U) \coloneqq \inf\Big\{ \liminf_{n\to\infty}|\DIFF u_n|(U)\;\Big|\; u_n \colon U\to\YY\text{ locally simple}, u_n \to u \text{ in }L^1_{\rm loc} \Big\}.
\end{equation}

Our first main result is the following:
\begin{thm}\label{thm:main result 1}
    Let $(\XX,\dist,\mass)$ be a PI space, let $(\YY,\rho)$ be a separable metric space and let $u \in L^1_{\rm loc}(\XX,\YY)$. If $V_u(\XX)<\infty$, then $V_u$ is the restriction to open sets of a finite Borel measure.
\end{thm}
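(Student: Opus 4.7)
The plan is to verify that the set function $V_u$ on open subsets of $\XX$ satisfies the hypotheses of the De Giorgi--Letta criterion: monotonicity, finite subadditivity, additivity on disjoint open pairs, and inner regularity (equivalently, continuity from below along increasing exhaustions). Since $V_u(\XX)<\infty$ by assumption, the De Giorgi--Letta theorem will then produce a finite Borel measure on $\XX$ whose restriction to open sets coincides with $V_u$.

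The easy properties follow from the fact that each $|\DIFF u_n|$ is a Borel measure, hence monotone and additive on disjoint Borel sets, combined with the observation that restricting any locally simple approximating sequence on a larger open set to a smaller one still yields an admissible competitor. This gives both monotonicity of $V_u$ and superadditivity on disjoint open pairs.

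The core step is subadditivity. Given $U_1,U_2$ open and $W\Subset U_1\cup U_2$, choose inner open sets $V_i\Subset U_i$ with $W\subseteq V_1\cup V_2$ and a Lipschitz cutoff $\eta\colon\XX\to[0,1]$ equal to $1$ outside $U_2$ and to $0$ outside $U_1$. Pick locally simple approximations $u_n^i$ of $u$ on $U_i$ with $|\DIFF u_n^i|(U_i)\to V_u(U_i)$, and for each $t\in(0,1)$ define the locally simple map
\[
\tilde u_n^t \defeq \begin{cases} u_n^1 & \text{on }\{\eta>t\},\\ u_n^2 & \text{on }\{\eta\le t\}. \end{cases}
\]
Its total variation on $W$ is dominated by
\[
|\DIFF u_n^1|(\{\eta>t\}\cap W)+|\DIFF u_n^2|(\{\eta\le t\}\cap W)+\int_W\rho(u_n^1,u_n^2)\,\diff|\DIFF\nchi_{\{\eta>t\}}|,
\]
where the last term quantifies the jump across the patching interface. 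Integrating in $t$ and invoking the coarea inequality valid on PI spaces,
\[
\int_0^1\!\int_W\rho(u_n^1,u_n^2)\,\diff|\DIFF\nchi_{\{\eta>t\}}|\,\diff t \;\lesssim\; \LIP(\eta)\int_{U_1\cap U_2}\rho(u_n^1,u_n^2)\,\diff\mass,
\]
which vanishes as $n\to\infty$ because $u_n^1,u_n^2\to u$ in $L^1_{\rm loc}$ on the overlap. A pigeonhole choice of levels $t_n\in(0,1)$ makes the interface cost infinitesimal, so $\tilde u_n^{t_n}\to u$ in $L^1_{\rm loc}$ and $\limsup_n|\DIFF\tilde u_n^{t_n}|(W)\le V_u(U_1)+V_u(U_2)$. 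Taking the sup over $W\Subset U_1\cup U_2$ yields subadditivity.

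Inner regularity is obtained by the same mechanism applied to an exhaustion $V^k\Subset V^{k+1}\Subset U$ with $\bigcup_k V^k=U$: diagonally patched almost-optimal locally simple approximations on successive layers of a Lipschitz cutoff produce a sequence on $U$ whose total variation is controlled by $\sup_k V_u(V^k)$. The main obstacle throughout is precisely this patching step --- the non-linearity of the target forbids any convex interpolation, so two approximating sequences can only be spliced along level sets of a cutoff, introducing jumps of size $\rho(u_n^1,u_n^2)$ concentrated on perimeter-supported interfaces. It is exactly the PI structure (via the availability of good Lipschitz cutoffs and the coarea inequality) that lets us average these jumps away in the limit.
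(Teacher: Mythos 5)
Your high-level strategy coincides with the paper's: both verify the De Giorgi--Letta hypotheses, both obtain subadditivity and inner regularity by patching almost-optimal locally simple approximations along level sets of a Lipschitz function and averaging the interface cost away via coarea. So this is not a different route; the question is whether the crucial step is actually proved.

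It is not, and this is a genuine gap. The inequality you assert --- that the total variation of $\tilde u_n^t$ on $W$ is dominated by
\[
|\DIFF u_n^1|(\{\eta>t\}\cap W)+|\DIFF u_n^2|(\{\eta\le t\}\cap W)+\int_W\rho(u_n^1,u_n^2)\,\diff|\DIFF\nchi_{\{\eta>t\}}|
\]
--- is precisely the content of the paper's joint property (Lemma~\ref{lem:joint}), and it is far from automatic for metric-valued simple maps. One cannot simply post-compose with $1$-Lipschitz $\varphi$ and invoke a BV Leibniz rule pointwise, because $|\DIFF w|$ is defined as the \emph{minimal} measure dominating all $|\DIFF(\varphi\circ w)|$, and the interface contribution $\int\rho(u_n^1,u_n^2)\,\diff\per$ involves the precise representatives of the simple maps on the essential boundary. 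The paper proves this via Lemma~\ref{cdscascs}, which characterizes (up to structural constants depending on the PI parameters) the total variation of a simple map as $\sum_{i\neq j}\rho(y_i,y_j)\,\HH^h\mres(\partial^*E_i\cap\partial^*E_j)$, together with the representation formula for the perimeter (Theorem~\ref{thm:repr_per}) to pass between $\HH^h\mres\partial^*E_s$ and $\per(E_s,\cdot)$. Moreover, the level $t$ must be chosen with extra care so that $\partial E_t$ misses (in $\HH^h$) the essential boundaries of the level sets of $u_n^1$ and $u_n^2$; your ``pigeonhole choice'' only addresses smallness of the interface integral, not this boundary-avoidance, which is what makes the jump estimate decouple in the first place. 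Without the quantitative control from Lemma~\ref{cdscascs}, the splice is merely formal.

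The continuity-from-below step is also substantially more delicate than you indicate: the paper needs an auxiliary variation $\tilde V_u$ (relaxation with $L^1$ rather than $L^1_{\rm loc}$ approximation) to compare across an increasing exhaustion, a telescoping decomposition into well-separated annular layers $S_k$, and an inductive construction with summable error budgets, each step invoking the joint property again. Your one-sentence description does not confront the bookkeeping that makes the diagonal argument close. In short: your skeleton is correct and matches the paper's, but the essential technical input --- the structure of the total variation of simple maps on a PI space and the resulting quantitative joint property --- is assumed rather than proved, and it is exactly there that the PI hypothesis is truly used.
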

This will be proved in Theorem \ref{thm:V is a measure}, thus generalizing \cite{ambmetric} to the current framework. To prove it, we perform a fine investigation around simple maps to characterize their total variations (up to structural constants, see Lemma \ref{cdscascs}). Then, a delicate joint property of locally simple maps can be proved (see Lemma \ref{lem:joint}) to make a general strategy work and prove that the Carath\'eodory extension of $V_u$ is indeed a measure.

We can then present our second main result (we will consider pointed metric targets).
\begin{thm}\label{thm:main result 2}
     Let $(\XX,\dist,\mass)$ be a PI space and let $(\YY,\rho)$ be a separable metric space. Then:
     \[
     \BV(\XX,\YY) = \BV^*(\XX,\YY) = \BV_w(\XX,\YY).
     \]
     Moreover, if $u \in \BV(\XX,\YY)$, we have for some $C>0$ depending only on the PI-parameters:   
     \[
        |\DIFF u |\le V_u \le C |\DIFF u|, \qquad |\DIFF u|\le |\DIFF u|_w \le C|\DIFF u|.
     \]
\end{thm}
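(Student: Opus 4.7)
The plan is to close the chain $\BV^*(\XX,\YY) \subseteq \BV(\XX,\YY) \subseteq \BV_w(\XX,\YY) \subseteq \BV^*(\XX,\YY)$ with matching measure inequalities. The two ``easy'' inclusions $\BV^* \subseteq \BV$ and $\BV_w \subseteq \BV$ use only soft tools: lower semicontinuity of the scalar BV total variation and the real-valued test-plan equivalence of \cite{AmbrosioDiMarino14}. The remaining inclusion $\BV \subseteq \BV^*$, with the bound $V_u \le C|\DIFF u|$, carries the bulk of the work and will be obtained constructively, via an explicit simple-map approximation together with the measure property from Theorem \ref{thm:main result 1}.

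For the easy direction, suppose locally simple $u_n \to u$ in $L^1_{\mathrm{loc}}$ with $\sup_n |\DIFF u_n|(\XX) < \infty$: for any $1$-Lipschitz $\varphi \colon \YY \to \RR$ the scalar composition $\varphi \circ u_n \to \varphi \circ u$ in $L^1_{\mathrm{loc}}$ with $|\DIFF(\varphi \circ u_n)| \le |\DIFF u_n|$, and lower semicontinuity of scalar BV total variation, combined with a partition-supremum over a countable dense family of $\varphi$'s (using separability of $\YY$), yields $|\DIFF u| \le V_u$. For $\BV_w \subseteq \BV$, the pointwise inequality $|\DIFF(\varphi \circ u \circ \gamma)| \le |\DIFF(u \circ \gamma)|$, valid on $[0,1]$ for every $1$-Lipschitz $\varphi$, transfers \eqref{eq:BV weak intro} to the scalar composition $\varphi \circ u$; invoking \cite{AmbrosioDiMarino14} gives $|\DIFF(\varphi \circ u)| \le |\DIFF u|_w$ and hence $|\DIFF u| \le |\DIFF u|_w$.

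For the constructive step, I would fix $\epsilon_n \downarrow 0$, a Borel partition $\{A_j^n\}_{j \in \NN}$ of $\YY$ of mesh $< \epsilon_n$ with representatives $y_j^n \in A_j^n$, and define $u_n(x) := y_j^n$ for $x \in u^{-1}(A_j^n)$; the map $u_n$ is locally simple and converges uniformly to $u$. Lemma \ref{cdscascs} identifies $|\DIFF u_n|$ with a weighted pair-perimeter sum $\sum_{j < k} \rho(y_j^n, y_k^n)\,\mathcal{P}(E_j^n, E_k^n; \cdot)$ over the level sets $E_j^n := u^{-1}(A_j^n)$, and this sum must be bounded by $C|\DIFF u|$ with $C$ depending only on the PI parameters. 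The idea is to use the $1$-Lipschitz slicings $\rho(\cdot, y_j^n) \circ u$, for which $|\DIFF(\rho(\cdot, y_j^n) \circ u)| \le |\DIFF u|$, together with the scalar coarea formula on PI spaces, so that the perimeter of $u^{-1}(B(y_j^n, r))$ gets integrated in $r$ against $|\DIFF u|$; the weight $\rho(y_j^n, y_k^n) \lesssim \epsilon_n$ for effective pairs precisely compensates the $1/\epsilon_n$ blow-up produced by scalar coarea, while doubling controls the overlap combinatorics of balls around the $y_j^n$'s. Passing to the liminf gives $V_u(U) \le C|\DIFF u|(U)$ on opens, which upgrades to a Borel measure inequality by Theorem \ref{thm:main result 1}. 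Finally, $|\DIFF u|_w \le V_u \le C|\DIFF u|$ follows since each $u_n$ satisfies $|\DIFF u_n|_w \le |\DIFF u_n|$ (apply \cite{AmbrosioDiMarino14} to each $\varphi_k \circ u_n$ in a countable dense family and reassemble by a monotone-class argument) together with lower semicontinuity of $|\cdot|_w$ along $L^1_{\mathrm{loc}}$ convergence.

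The main obstacle is the perimeter bookkeeping in the constructive step: the indicators $\mathbf{1}_{A_j^n}$ are not Lipschitz on $\YY$, so a direct chain rule is unavailable. One has to trade slicing by $1$-Lipschitz postcompositions on $\YY$ for slicing by the partition $\{A_j^n\}$, and then exploit PI-specific tools (coarea for scalar BV, doubling to count overlapping balls around the $y_j^n$'s, self-improving Poincar\'e) to absorb everything into a single structural constant $C$; the delicate point is to do this uniformly in $n$ so that the constant is independent of the mesh $\epsilon_n$.
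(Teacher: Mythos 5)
Your ``easy'' inclusions $\BV^* \subseteq \BV$ (lower semicontinuity) and $\BV_w \subseteq \BV$ (via \cite{AmbrosioDiMarino14} for each $1$-Lipschitz post-composition) agree with the paper and are fine. The two hard steps, however, each contain a genuine gap, and in both cases the paper's actual route is substantially different.

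For $\BV \subseteq \BV^*$, you discretize the \emph{target}: take a Borel partition $\{A_j^n\}$ of $\YY$ of mesh $<\epsilon_n$ and set $u_n := y_j^n$ on $u^{-1}(A_j^n)$. Two problems. First, $u_n$ need not be locally simple: for $u\in L^1(\XX,\YY_{\bar y})\cap\BV(\XX,\YY_{\bar y})$ there is no local compactness of the essential range, so infinitely many $E_j^n=u^{-1}(A_j^n)$ can carry positive measure on a fixed ball of $\XX$. Second, and more seriously, the perimeter bookkeeping fails uniformly in $n$: if $u$ oscillates rapidly between two target points $y,z$ with $\rho(y,z)=\delta\ll\epsilon_n$ which happen to straddle a cell boundary, each oscillation contributes $\delta$ to $|\DIFF u|$ but $\approx\epsilon_n$ to $\rho(y_j^n,y_k^n)\per(E_j^n,E_k^n;\cdot)$, so $|\DIFF u_n|\gtrsim(\epsilon_n/\delta)|\DIFF u|$, which is unbounded. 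Choosing $r_j$ by scalar coarea so that $\per(\{\rho(u,y_j^n)<r_j\})\lesssim\epsilon_n^{-1}|\DIFF u|$ controls the level set of a single $1$-Lipschitz slice, but says nothing about $\per(E_j^n,E_k^n)$ once the $A_j^n$ are arbitrary Borel cells; one would need to choose the whole partition adaptively to $u$, which you don't do. The paper sidesteps all of this by discretizing the \emph{source} instead: a Whitney-type covering $\{B_i\}$ of $\Omega$ from Lemma \ref{lem:covering} (automatically locally finite, with good perimeter bounds $r_i\per(B_i)\lesssim\mass(B_i)$ built in), representative values $y_i\in\YY$ chosen by the metric Poincar\'e inequality of Proposition \ref{thm:Equivalent BVmaps PI}, and then $u_h := \sum_i y_i\nchi_{B_i\setminus\bigcup_{j<i}B_j}$. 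Lemma \ref{cdscascs} plus Lemma \ref{threedottwo} then give $|\DIFF u_h|(\Omega)\le C|\DIFF u|(\Omega)$ uniformly in $h$.

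For $|\DIFF u|_w \le V_u$, you assert that each simple $u_n$ satisfies $|\DIFF u_n|_w \le |\DIFF u_n|$ by applying \cite{AmbrosioDiMarino14} to $\varphi_k\circ u_n$ and ``reassembling by a monotone-class argument.'' This is exactly the point where the argument breaks: from $\int\gamma_*|\DIFF(\varphi_k\circ u_n\circ\gamma)|\,\dd\pi \le \mathrm{Comp}(\pi)\mathrm{Lip}(\pi)|\DIFF(\varphi_k\circ u_n)|$ for each $k$, you cannot interchange the $\int\dd\pi$ with the supremum $\bigvee_k$ over post-compositions, because in general $\int\gamma_*\big(\bigvee_k\mu_k^\gamma\big)\dd\pi \not\le \bigvee_k\int\gamma_*\mu_k^\gamma\,\dd\pi$. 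Indeed the paper only obtains $|\DIFF u|_w\le C V_u$ with a structural constant $C>1$, and that loss is intrinsic to the PI framework: the proof for simple maps partitions $[0,1]$ into intervals containing at most one jump of $u\circ\gamma$, restricts $\pi$ accordingly, identifies the two jump values $\alpha_j,\alpha_k$, reduces to a two-valued map $u_{j,k}$, applies \cite{AmbrosioDiMarino14}, and then reassembles using the two-sided bounds of Lemma \ref{cdscascs} which carry the structural constant. Your proposal does not engage with this interchange problem, and the sharper constant $C=1$ you claim is unsubstantiated.
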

This follows as a combination of several inclusions proved in Section \ref{sec:equivalence}. To prove the above, we achieve in Proposition \ref{thm:Equivalent BVmaps PI} two other characterizations of independent interest:
\begin{itemize}
    \item[i)] A map is in $\BV(\XX,\YY)$ if and only if it satisfies a maximal-type estimate in the spirit of \cite{Haj03,HajNew} and, for the BV-case, \cite{LTpointwise}.
    \item[ii)] A map is in $\BV(\XX,\YY)$ if and only if it satisfies a suitable metric-type Poincar\'e inequality.
\end{itemize}

\medskip

We conclude this Introduction by discussing the case of more regular source/target metric spaces. Not surprisingly, we expect that assuming synthetic Ricci curvature lower bounds at the source level makes it possible to investigate finer properties in our theory thanks to the deep understanding of the co-dimension one structure of the so-called $\RCD$-spaces (see \cite{AmbrosioICM} for a thorough discussion)  achieved in \cite{AMbrosioBrueSemola19,BruPasSem19,BruPasSem21-constantcodimension}. Nowadays, there is a solid fine theory of BV-functions defined on $\RCD$-spaces, resembling the classical Euclidean framework, \cite{BrenaGigli22_Calculus,BrenaGigli22_Local,AntonelliBrenaPasqualetto22_Rank,AntonelliBrenaPasqualetto22_Graph,BrenaPasqualettoPinamonti22}. We postpone to a future investigation the study of fine properties of BV-maps in such setting.

Finally, we mention a possible research direction assuming also non-positive sectional curvature bounds in the sense of Alexandrov of the target space (see, e.g., \cite{BH99,BBI01} and references therein). A natural investigation would be to revisit the Korevaar-Schoen theory \cite{KorevaarSchoen93} for maps of bounded variation with $\RCD$-source space and ${\sf CAT}(0)$-target as done recently in \cite{GigliTyulenev21Dir,GT21} for the case of Sobolev maps. Thanks to the (universal) infinitesimal Hilbertianity of the target \cite{DiMarinoGigliPasqualettoSoultanis21}, one can then study `parallelogram identities'  for the pairing of $|\DIFF u|$ with a regular vector field inducing a weak type of flow \cite{AmbrosioTrevisan14}. 

\medskip

\noindent\textbf{Acknowledgements}.  Part of this work was carried out at the Fields Institute during the Thematic Program on Nonsmooth Riemannian and Lorentzian Geometry, Toronto 2022. The authors gratefully acknowledge the warm hospitality and the stimulating atmosphere. Moreover, we thank the referee for useful suggestions and a careful reading.

C.B.\ is supported by the PRIN MIUR project “Gradient flows, Optimal Transport and Metric Measure Structures”.

F.N.\ is supported by the Academy of Finland projects \emph{Geometric Aspects of Sobolev
Space Theory}, Grant No.\ 314789 and \emph{Incidences on Fractals}, Grant No.\ 321896. 

E.P.\ thanks Davide Carazzato for the useful discussions on the results of Section \ref{ss:var_simple}.

F.N.\ and E.P.\ also acknowledge the support of the Academy of Finland  Grant No.\ 323960.

\section{Preliminaries}\label{sec:prelim}

\subsection{Notation}
For our purposes, a metric measure space is a triplet \((\XX,\dist,\mass)\), where
\[\begin{split}
(\XX,\dist)&\quad\text{ is a complete, separable metric space},\\
\mass\geq 0&\quad\text{ is a Borel measure on }(\XX,\dist)\text{ that is finite on bounded subsets}.
\end{split}\]
Denote by \(\mathscr{P}(\XX)\) the family of all Borel probability measures on \((\XX,\dist)\), equipped with the weak topology in duality with continuous and bounded functions \(C_b(\XX)\). Given $\varphi \colon \XX \to \YY$ Borel, for a metric target $(\YY,\rho)$, and $\mu \in \mathscr{P}(\XX)$, the \emph{pushforward} measure of $\mu$ via $\varphi$ is defined as $\varphi_* \mu (B) \coloneqq \mu(\varphi^{-1}(B))$, for all $B\subset\YY$ Borel.

Given $\varnothing \neq \Omega\subset \XX$ open, we denote $L^1_{\rm loc}(\Omega)$ the space of Borel locally integrable functions $f \colon \Omega \to \RR$ quotiented up to $\mass$-a.e.\ equality in $\Omega$. By locally integrable, we mean that every point has an open neighbourhood $U$ (depending on $x$) so that $f \in L^1(U)$.

For a metric space $(\YY,\rho)$, we denote $L^1_{\rm loc}(\Omega,\YY)$ the space of maps $u \colon \Omega \to \YY$ with separable range defined up to $\mass$-a.e.\ equality in $\Omega$ so that $\rho(u,\bar y) \in L^1_{\rm loc}(\Omega)$ for one point (hence all points)  $\bar y \in \YY$. If also $\rho(u,\bar y) \in L^1(\Omega)$, then we say that $u \in L^1(\Omega,\YY_{\bar y})$. In this situation, it will be convenient to consider a \emph{pointed} metric space $(\YY,\rho,\bar y)$. When either $\mass(\Omega)<\infty$ or we have a standard target $\YY=\RR$ (or a Banach space), the dependence on $\bar y$ will be dropped. We shall tacitly drop the point $\bar y$ in all the forthcoming notations in these two situations.
Notice that in order to impose integrability assumptions on a map, we ask , as customary, that its image lies on a separable subset of the target. For the sake of simplicity, throughout this manuscript we will assume that the whole target is separable. This causes no loss of generality in our results.

We denote by $\LIP(\Omega)$ the set of Lipschitz function $f \colon \Omega \to \RR$ and by $\LIP_{\rm loc}(\Omega)$ the set of functions so that for every point there exists a neighbourhood where they are Lipschitz. By $\lip(f)$, we denote the local lipshitz constant of $f \colon \Omega \to \RR$ defined as $\limsup_{y\to x}\frac{|f(y)-f(x)|}{\dist(x,y)}$ if $x \in \Omega$ is not isolated and  $0$ otherwise.

In this manuscript, we will mainly work in proper metric measure spaces, i.e.\ spaces in which bounded and closed sets are compact. We say that $U$ open is well-contained inside $\Omega$, and write $U \Subset \Omega $ provided $U$ is bounded and so that $\dist(U,\Omega^c)>0$. When $\Omega =\XX$, we only ask for $U$ to be bounded. When the underlying space is proper, then any such $U$ is compact in $\Omega$ and we single out the following equivalences for later use:
\[\begin{split}
f \in L^1_{\rm loc}(\Omega) \qquad &\Leftrightarrow\qquad f \in L^1(U), \qquad \forall U \Subset \Omega,\\
f \in \LIP_{\rm loc}(\Omega) \qquad &\Leftrightarrow \qquad         f \in \LIP(U), \qquad \forall U \Subset \Omega. \nonumber
\end{split}\]
Finally, throughout this manuscript, we shall indicate with $C>0$ a general constant that during the estimates might change from line to line, without notice. Even though {its value is explicit}, we shall only keep track of its dependence on suitable parameters of the space, rather than writing its value.

\subsection{Metric valued curves and test plans}
Denote by $C([0,1],\XX)$ the set of continuous and $\XX$-valued curves defined on the unit interval and equipped with the sup distance. We define the evaluation map ${\sf e} \colon [0,1]\times C([0,1],\XX) \to {\sf e}(\gamma,t) \coloneqq \gamma_t$ and observe that it is continuous. For $t \in [0,1]$, denote $\gamma \mapsto {\sf e}_t(\gamma) \coloneqq {\sf e}(\gamma,t) $ the evaluation map at time $t$ and, for $0\le t<s\le 1$, also the restriction operation ${\rm restr}_t^s \colon C([0,1],\XX) \to C([0,1],\XX)$ defined by $ {\rm restr}_t^s (\gamma) \coloneqq \gamma_{(1-\cdot)t +\cdot s}$.  

We denote $\LIP([0,1],\XX)$ the subset of Lipschitz curves with values in $\XX$. It is well known that, given \( \gamma\in\LIP([0,1],\XX)\) there exists
\(|\dot\gamma_t|\coloneqq\lim_{h\to 0}\dist(\gamma_{t+h},\gamma_t)/|h|\) for a.e.\ \(t\in[0,1]\) with  $|\dot\gamma| \in L^\infty(0,1)$ (see \cite[Theorem 1.1.2]{AmbrosioGigliSavare08}).

Let us define for convenience the metric speed functional
\({\sf ms}\colon C([0,1],\XX)\times[0,1]\to[0,+\infty]\):
\[
{\sf ms}(\gamma,t)\coloneqq|\dot\gamma_t|,
\quad\text{ whenever }\gamma\in \LIP([0,1],\XX)\text{ and }
\exists\lim_{h\to 0}\frac{\dist(\gamma_{t+h},\gamma_t)}{|h|},
\]
and \({\sf ms}(\gamma,t)\coloneqq+\infty\) otherwise. We are now ready to give the definition of $\infty$-test plans.
\begin{defn}
Let \((\XX,\dist,\mass)\) be a metric measure space. A measure \(\pi\in\mathscr P\big(C([0,1],\XX)\big)\) is an $\infty$-test plan, provided
\[\begin{split}
&\exists\,{\rm C}>0:\quad\forall t\in[0,1],\quad({\sf e}_t)_*\pi\leq{\rm C}\mass,\\
&\|{\sf ms}\|_{L^\infty(\pi\otimes\mathcal L_1)}<+\infty.
\end{split}\]
The minimal constant $C>0$ for which the above holds is called compression constant and denoted by \({\rm Comp}(\pi)\). The energy of $\pi$ is then denoted ${\rm Lip}(\pi) \coloneqq \|{\sf ms}\|_{L^\infty(\pi\otimes\mathcal L_1)}$.
\end{defn}
We point out that the notation of energy is motivated by the following observation. If $\pi$ is an $\infty$-test plan, then it is concentrated on ${\rm Lip}(\pi)$-Lipschitz curves (see, e.g.,  \cite[Remark 2.2]{NobiliPasqualettoSchultz21}). 
In this note, we shall also deal with BV-curves with values in a metric space. 
\begin{defn}[BV-curve]\label{def:BV curve}
Let $(\YY,\rho)$ be a separable metric space and let $\beta \colon [0,1]\to\YY$ be Borel. Given $I\subseteq [0,1]$ define the \emph{variation} 
\[ {\rm Var}_\beta(I):=  \sup \sum_i \rho(\beta_{t_{i+1}},\beta_{t_i}) <\infty,\]
where the sup is taken over all finite partitions $(t_i)$ of $I$. 

We say that $\gamma \in L^1([0,1],\YY)$ is a BV-curve, and we write $\gamma \in \BV([0,1],\YY)$, provided
\[
|\DIFF \gamma|([0,1]) \coloneqq \inf \{ {\rm Var}_\beta([0,1]) \colon \beta \text{ Borel representative of }\gamma \}<\infty.
\]  
\end{defn}

It can be deduced that there is a well-defined Borel measure which, by abuse of notation, we still denote by $|\DIFF \gamma|$, that localizes to Borel sets the above construction. Moreover, it holds
\begin{equation}
    \gamma_n \to \gamma \text{ in }L^1([0,1],\YY) \qquad \Rightarrow \qquad |\DIFF \gamma |(I) \le \liminf_{n\to\infty}|\DIFF \gamma_n| (I)\label{eq:BV curve lsc}
\end{equation}
for all $I\subseteq [0,1]$ open. We also have the following characterization:
\begin{equation}
\gamma \in \BV([0,1],\YY)\quad \text{if and only if} \quad \varphi \circ \gamma \in \BV(0,1)\text{ and } |\DIFF (\varphi\circ \gamma)|\le \mu,
\label{eq:BV curve postcomp}
\end{equation}
for some finite Borel measure $\mu$ and for all $1$-Lipschitz functions $\varphi \colon \YY\to\RR$.

Finally, $|\DIFF \gamma|$ coincides with the smallest measure $\mu$ for the above to hold. See, e.g., \cite[Theorem 2.17]{AbediLiSchultz22}, for a list of equivalent characterizations where all these claims can be deduced.
\subsection{BV-functions and sets of finite perimeter in PI spaces}

We recall from \cite{MirandaJr03,AmbrosioDiMarino14} the notion of a function of bounded variation.
\begin{defn}
Let $(\XX,\dist,\mass)$ be a metric measure space, let $\varnothing \neq \Omega\subseteq\XX$ be open and let $f\in L^1_{\rm loc}(\Omega)$. Given any open set $U\subseteq\Omega$,
we define the \emph{total variation} of $f$ on $U$ as
\[
|\DIFF f|(U)\coloneqq {\inf}\Big\{\liminf_{n\to\infty} \int_U \lip(f_n)\,\dd \mass \colon (f_n)_n\subset \LIP_{\rm loc}(U), f_n\to f \text{ in }L^1_{\rm loc}(U)\Big\}.
\]
Then $f \in L^1_{\rm loc}(\Omega)$ is of \emph{locally bounded variation}, writing $f \in \BV_{\rm loc}(\Omega)$, provided for every point there is a neighborhood where $|\DIFF f|$ is finite. If $f \in L^1(\Omega)$ and $|\DIFF f|(\Omega)<\infty$, then $f$ is of bounded variation and we simply write $f \in \BV(\Omega)$.
\end{defn}
By a diagonal argument, it can be readily checked that the energy $|\DIFF f|(\Omega)$ is lower semicontinuous with respect to $L^1_{\rm loc}$-convergence, i.e.\ for every $U\subseteq \Omega$ open it holds
\[
    f_n \to f \text{ in }L^1_{\rm loc}(\Omega) \qquad \Rightarrow\qquad  |\DIFF f|(U) \le \liminf_{n\to\infty} |\DIFF f_n|(U).
\]
By a standard Carath\'eodory construction, given any $f \in L^1_{\rm loc}(\Omega)$ the {set-function} $|\DIFF f|$ defined on open sets can be extended to a well-defined Borel measure. In the case of $f = \nchi_E$, with $E\subseteq\Omega$ Borel, we say that $E$ is a \emph{set of finite perimeter} in \(\Omega\)
if $|\DIFF \nchi_E|(\Omega)<\infty$. The \emph{perimeter measure} is then given by $\per(E,B)\coloneqq |\DIFF \nchi_E|(B)$ for every $B\subseteq\Omega$ Borel.

We recall the following coarea formula that holds on general metric measure spaces, following verbatim the proof of \cite{MirandaJr03}, see \cite{AmbrosioDiMarino14}.
\begin{thm}
Let $(\XX,\dist,\mass)$ be a metric measure space and let $\varnothing \neq \Omega\subseteq\XX$ be open. Fix $f \in \BV_{\rm loc}(\Omega)$ and $E\subseteq\Omega$ Borel.
Then, the function $\RR\ni t\mapsto \per(\{f>t\},E) \in [0,\infty]$ is Borel measurable and it holds that
\[
|\DIFF f|(E) = \int_\RR \per(\{f>t\},E)\,\dd t.
\]
\end{thm}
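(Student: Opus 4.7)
The plan is to follow Miranda's classical argument \cite{MirandaJr03}, splitting the equality into two inequalities and first verifying Borel measurability of $t\mapsto \per(\{f>t\}, E)$. Measurability follows from the monotonicity of $t \mapsto \nchi_{\{f>t\}}$ in $L^1_{\rm loc}$, combined with the lower semi-continuity of the perimeter with respect to $L^1_{\rm loc}$-convergence recorded just above the statement; a standard monotone class argument then passes from $E$ open to $E$ Borel.

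For the easy inequality $\int_\RR \per(\{f>t\}, E)\,\dd t \leq |\DIFF f|(E)$, I first reduce to $E = U$ open via outer regularity of both set functions. In the Lipschitz case $f \in \LIP_{\rm loc}(U)$, truncating $f$ at each level and bounding the perimeter by the local Lipschitz constant of an explicit truncation gives
\[
\int_\RR \per(\{f>t\}, U)\,\dd t \leq \int_U \lip(f)\,\dd\mass,
\]
mirroring the Euclidean coarea for Lipschitz maps. For general $f \in L^1_{\rm loc}(\Omega)$, pick $f_n \in \LIP_{\rm loc}(U)$ with $f_n \to f$ in $L^1_{\rm loc}(U)$ and $\int_U \lip(f_n)\,\dd\mass \to |\DIFF f|(U)$. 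By Fubini, $\int_\RR \|\nchi_{\{f_n>t\}} - \nchi_{\{f>t\}}\|_{L^1(U')}\,\dd t = \|f_n-f\|_{L^1(U')} \to 0$ for every $U'\Subset U$, so along a subsequence $\nchi_{\{f_n>t\}} \to \nchi_{\{f>t\}}$ in $L^1_{\rm loc}(U)$ for a.e.\ $t\in \RR$. Applying Fatou together with lower semi-continuity of the perimeter concludes the estimate on open sets, and outer regularity handles Borel $E$.

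For the harder reverse inequality, the strategy is to quantize $f$ at scale $1/k$: set
\[
f_k \coloneqq \sum_{j\in\ZZ}\frac{j}{k}\,\nchi_{\{j/k \,\leq\, f \,<\, (j+1)/k\}},
\]
so that $\|f_k - f\|_{L^\infty} \leq 1/k$ and in particular $f_k \to f$ in $L^1_{\rm loc}(U)$ for every open $U \subseteq \Omega$. Writing $f_k$ as a telescoping sum of the indicators $\nchi_{\{f \geq j/k\}}$ each weighted by $1/k$, subadditivity of the total variation delivers
\[
|\DIFF f_k|(U) \leq \frac{1}{k}\sum_{j\in\ZZ} \per(\{f > j/k\}, U),
\]
after noting that $\{f \geq j/k\}$ and $\{f > j/k\}$ coincide up to a Lebesgue-null set of levels. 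The right-hand side is a Riemann sum for $\int_\RR \per(\{f>t\}, U)\,\dd t$; sending $k\to\infty$ and invoking lower semi-continuity of the total variation along $f_k \to f$ gives $|\DIFF f|(U) \leq \int_\RR \per(\{f>t\}, U)\,\dd t$, and extension to Borel $E$ is again by outer regularity.

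The main obstacle will be legitimising the sub-additivity estimate when the sum over $j$ is infinite: one must ensure local integrability of the partial sums, control the tails as $|j|\to\infty$, and justify Riemann-sum convergence despite $t \mapsto \per(\{f>t\}, U)$ being merely Borel and only a posteriori integrable (via the easy inequality already proved). The standard remedy is to truncate first, replacing $f$ with $f^M \coloneqq (f\wedge M)\vee(-M)$ so that the sum over $j$ involves only $\approx 2Mk$ nonzero terms, and then to let $M\to\infty$ using monotone convergence on the right and lower semi-continuity of the total variation on the left.
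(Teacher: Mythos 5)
The paper does not actually supply a proof here: it only cites \cite{MirandaJr03} (see also \cite{AmbrosioDiMarino14}) and states that the argument carries over verbatim. So you are effectively reconstructing Miranda's proof from scratch. Your measurability argument and your ``easy'' inequality $\int_\RR\per(\{f>t\},E)\,\dd t\le|\DIFF f|(E)$ are sound: the map $t\mapsto\nchi_{\{f>t\}}$ is right-continuous with left limits in $L^1_{\rm loc}$, hence Borel, and composing with the lower semicontinuous $\per(\,\cdot\,,U)$ gives Borel measurability on open sets; the Lipschitz coarea inequality combined with Cavalieri and Fatou gives the inequality on open sets, and outer regularity of both set functions transfers everything to Borel $E$.

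There is, however, a genuine gap in your reverse inequality, and you have in fact put your finger on it without resolving it. After the telescoping bound
\[
|\DIFF f_k|(U)\le\frac{1}{k}\sum_{j\in\ZZ}\per\big(\{f\ge j/k\},U\big),
\]
you need the right-hand side to converge to $\int_\RR\per(\{f>t\},U)\,\dd t$ as $k\to\infty$. But $t\mapsto\per(\{f>t\},U)$ is only Borel and, a posteriori, $L^1$; Riemann sums on the fixed grid $(j/k)_j$ need not converge to the Lebesgue integral of such a function (take $\nchi_{\QQ\cap[0,1]}$ as a toy counterexample). Truncating at height $M$ makes the sum finite but does nothing to cure this, nor does it address the fact that $j/k$ may be precisely one of the exceptional levels where $\{f\ge t\}$ and $\{f>t\}$ differ in measure. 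The standard remedy -- and the one Miranda uses -- is to quantize with a \emph{floating offset} and then average over it. For $s\in(0,1/k)$ set
\[
f_{k,s}\coloneqq\sum_{j\in\ZZ}\frac{1}{k}\Big(\nchi_{\{f>s+j/k\}}-\nchi_{\{0>s+j/k\}}\Big),
\]
which is again within $1/k$ of $f$ uniformly (it is the level-set Riemann sum of the Cavalieri representation of $f$), and for which sub-additivity gives $|\DIFF f_{k,s}|(U)\le\frac1k\sum_j\per(\{f>s+j/k\},U)$. The point is that the offset average is computed exactly:
\[
k\int_0^{1/k}\frac{1}{k}\sum_{j\in\ZZ}\per\big(\{f>s+j/k\},U\big)\,\dd s=\int_\RR\per(\{f>t\},U)\,\dd t,
\]
by the change of variable $t=s+j/k$. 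Assuming the right-hand side is finite (otherwise there is nothing to prove), for every $k$ there is an $s_k\in(0,1/k)$ with $\frac1k\sum_j\per(\{f>s_k+j/k\},U)\le\int_\RR\per(\{f>t\},U)\,\dd t$, and since $f_{k,s_k}\to f$ in $L^1_{\rm loc}(U)$, lower semicontinuity of the total variation gives $|\DIFF f|(U)\le\int_\RR\per(\{f>t\},U)\,\dd t$. This averaging is not cosmetic: it replaces the Riemann-integrability you do not have, and it also makes the distinction between $\{f\ge\,\cdot\,\}$ and $\{f>\,\cdot\,\}$ irrelevant without any further fuss. With this repair in place, your outline closes correctly.
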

The following lemma is useful to deduce the $\BV$ membership starting from the $\BV$ membership on sets of a covering. Its proof is essentially contained in the one of \cite[Lemma 5.2]{AmbrosioDiMarino14}, we give the details.
\begin{lem}\label{footnote9}
Let $(\XX,\dist,\mass)$ be a metric measure space, let $\varnothing\neq \Omega\subseteq\XX$ be open and let $f\in L^1(\Omega)$. Assume that there exists a sequence of open sets $(A_i)_i$ such that $\bigcup_i A_i=\Omega$ and, for every $i$, $f\in \BV(A_i)$ and moreover $\bigvee_i |\DIFF f|\mres A_i$ is a finite measure. Then
$f\in \BV(\Omega)$.
\end{lem}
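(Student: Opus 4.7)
The plan is to build Lipschitz approximations of $f$ on the whole of $\Omega$ by patching together the given local recovery sequences via a Lipschitz partition of unity, so as to show $|\DIFF f|(\Omega)\le \mu(\Omega)$, where $\mu\coloneqq\bigvee_i|\DIFF f|\mres A_i$ is the finite measure provided by the hypothesis. Since the space is paracompact (and proper, as per the manuscript's working assumption), after passing to a locally finite precompact refinement of $(A_i)$ one can produce a Lipschitz partition of unity $(\phi_i)$ with $\mathrm{supp}(\phi_i)\Subset A_i$ and $\sum_i\phi_i\equiv 1$ on $\Omega$. For each $i$, pick a recovery sequence $(f_n^{(i)})_n\subseteq\LIP_{\rm loc}(A_i)$ with $f_n^{(i)}\to f$ in $L^1_{\rm loc}(A_i)$ and $\int_{A_i}\lip(f_n^{(i)})\,\dd\mass\to|\DIFF f|(A_i)$, and define $F_n\coloneqq\sum_i\phi_i f_n^{(i)}$ on $\Omega$, where each product is set to $0$ outside $A_i$. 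Local finiteness gives $F_n\in\LIP_{\rm loc}(\Omega)$, and $\sum_i\phi_i\equiv 1$ combined with the $L^1_{\rm loc}$-convergence on each $A_i$ yields $F_n\to f$ in $L^1_{\rm loc}(\Omega)$.

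The key computation exploits the cancellation identity $\sum_i(\phi_i(y)-\phi_i(x))=0$: writing
\[
F_n(y)-F_n(x)=\sum_i\phi_i(x)\bigl(f_n^{(i)}(y)-f_n^{(i)}(x)\bigr)+\sum_i\bigl(\phi_i(y)-\phi_i(x)\bigr)\bigl(f_n^{(i)}(y)-f(x)\bigr)
\]
and taking the limsup as $y\to x$ yields the pointwise bound $\lip(F_n)(x)\le\sum_i\phi_i(x)\lip(f_n^{(i)})(x)+\sum_i\lip(\phi_i)(x)\,|f_n^{(i)}(x)-f(x)|$. Integration then splits into a main term and an error term: the error term $\sum_i\int\lip(\phi_i)|f_n^{(i)}-f|\,\dd\mass$ vanishes as $n\to\infty$ because $f_n^{(i)}\to f$ in $L^1$ on the compact support of $\phi_i$, while for the main term a standard argument (lower semicontinuity on every open subset of $A_i$, paired with the equality of total masses from the recovery property) delivers the weak convergence of Radon measures $\lip(f_n^{(i)})\mass\rightharpoonup|\DIFF f|\mres A_i$, so that $\int\phi_i\lip(f_n^{(i)})\,\dd\mass\to\int\phi_i\,\dd|\DIFF f|\mres A_i$.

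The conclusion relies on a lattice computation: writing $|\DIFF f|\mres A_i=g_i\,\dd\mu$ with $g_i\in[0,1]$ (Radon--Nikodym against $\mu$, using $|\DIFF f|\mres A_i\le \mu$), the identity $\mu=\bigvee_i g_i\,\dd\mu=(\sup_i g_i)\,\dd\mu$ forces $\sup_i g_i=1$ $\mu$-a.e., whence $\sum_i\phi_i g_i\le\sup_j g_j\cdot\sum_i\phi_i=1$. Integrating, $\sum_i\int\phi_i\,\dd|\DIFF f|\mres A_i\le\mu(\Omega)$, so $\liminf_n\int_\Omega\lip(F_n)\,\dd\mass\le\mu(\Omega)<\infty$, giving $|\DIFF f|(\Omega)\le\mu(\Omega)$ and hence $f\in\BV(\Omega)$. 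I expect the main obstacle to be the weak convergence of the recovery sequences $\lip(f_n^{(i)})\mass\rightharpoonup|\DIFF f|\mres A_i$, which must be extracted from the lower-semicontinuity bound paired with the recovery condition via a Portmanteau-type argument, together with the technical care needed for the partition of unity to have compactly supported pieces inside each $A_i$.
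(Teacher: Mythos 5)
Your approach — patching the local recovery sequences via a Lipschitz partition of unity $(\phi_i)$ and controlling the total energy by a Radon--Nikodym density argument against $\mu=\bigvee_i|\DIFF f|\mres A_i$ — is genuinely different from the paper's. The paper reduces to the two-set case and invokes \cite[Lemma 5.2]{AmbrosioDiMarino14}: first it shows that $f\in\BV(A)\cap\BV(B)$ implies $f\in\BV(A\cup B)$ with $|\DIFF f|\mres(A\cup B)\le|\DIFF f|\mres A\vee|\DIFF f|\mres B$, by gluing along an exhaustion $A_h\Subset A$, $B_h\Subset B$; then it iterates over finite unions; finally it appeals to continuity from below to pass to the countable union $\Omega=\bigcup_iA_i$. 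Both arguments rest on the same basic tool (Lipschitz cut-offs for gluing); yours glues all charts at once, which yields the clean estimate $|\DIFF f|(\Omega)\le\mu(\Omega)$ directly but forces you to control an infinite sum, whereas the paper's binary gluing followed by induction plus exhaustion sidesteps that issue entirely.

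That infinite sum is exactly where your write-up has a gap. With $F_n=\sum_i\phi_i f_n^{(i)}$ built with a single common index $n$, the integrated bound reads
\[
\int_\Omega\lip(F_n)\,\dd\mass\le\sum_i\int\phi_i\lip(f_n^{(i)})\,\dd\mass+\sum_i\int\lip(\phi_i)\,|f_n^{(i)}-f|\,\dd\mass,
\]
and when $(A_i)$ is infinite you cannot pass $\liminf_n$ inside the series: the error series may be $+\infty$ for every fixed $n$, and the term-by-term limits of the main series do not by themselves bound the $\liminf$ of the sum. The standard fix is a diagonal selection: for $\epsilon>0$, choose $n_i=n_i(\epsilon)$ so that the $i$-th main term is within $\epsilon 2^{-i}$ of $\int\phi_i\,\dd|\DIFF f|\mres A_i$ and the $i$-th error term is at most $\epsilon 2^{-i}$, and set $F_\epsilon\coloneqq\sum_i\phi_if_{n_i(\epsilon)}^{(i)}$; local finiteness still gives $F_\epsilon\to f$ in $L^1_{\rm loc}(\Omega)$ as $\epsilon\searrow 0$, and $\int_\Omega\lip(F_\epsilon)\,\dd\mass\le\mu(\Omega)+2\epsilon$. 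Two smaller remarks. You do not need the lattice fact $\sup_ig_i=1$ $\mu$-a.e.; the pointwise bound $g_i\le 1$ (from $|\DIFF f|\mres A_i\le\mu$) together with $\sum_i\phi_i=1$ already gives $\sum_i\phi_ig_i\le 1$. And the lemma is stated for a general metric measure space, without properness; fortunately, the properness you invoke is not needed: a locally finite refinement $(B_j)$ with each $B_j$ bounded and at positive distance from the complement of some $A_{i(j)}$, together with a subordinate locally Lipschitz partition of unity, exists in any metric space — precompactness of the supports plays no role in your argument.
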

\begin{proof}
    We first show that if $A,B\subseteq\XX$ are open, $f\in \BV(A)$ and $f\in \BV(B)$, then $f\in \BV(A\cup B)$. Let $(A_h)_h$ be a sequence of open sets such that $A_h\nearrow A$ and $A_h\Subset A$ for every $h$, and let similarly $(B_h)_h$. Now we can follow the proof of item $ii)$ of the proof of \cite[Lemma 5.2]{AmbrosioDiMarino14} to see that $f\in \BV(A_h\cup B_h)$ with $|\DIFF f|\mres (A_h\cup B_h)\le |\DIFF f|\mres A\vee |\DIFF f|\mres B$. Hence, by (the proof of) item $i)$ of the proof of \cite[Lemma 5.2]{AmbrosioDiMarino14}, we see that $f\in \BV(A\cup B)$. 

    From the reasoning above, it follows that $f\in \BV(A_1\cup \dots A_n)$ for every $n$, with
    \[
    |\DIFF f|\mres(A_1\cup \cdots \cup A_n)\le |\DIFF f|\mres A_1\vee\cdots\vee |\DIFF f|\mres A_n.
    \]
Now by (the proof of) item $i)$ of the proof of \cite[Lemma 5.2]{AmbrosioDiMarino14} again, we conclude.
\end{proof}
When the underlying space is a PI space, the theory of BV-functions and sets of finite perimeter enjoy fine properties. We recall here the notion of a PI space and list properties that we are going to use following, see e.g.\ \cite{Ambrosio2001,Ambrosio2002,AmbrosioMirandaPallara04,KinnunenKorteShanmugaligam14} and references therein.

We say that a metric measure space $(\XX,\dist,\mass)$ is \emph{uniformly locally doubling}, provided for any $R>0$ there is ${\sf Doub}(R)>0$ so that
\begin{equation}
   \mass(B_{2r}(x))\le {\sf Doub}(R)\mass(B_r(x)),\qquad \text{for every }x\in\XX,r \in (0,R). \label{eq:doubling}
\end{equation}
In this situation, given a non-negative finite Borel measure $\mu$, the maximal operator
\[
\mathcal{M}_r(\mu)(x) \coloneqq \sup_{s \in (0,r)} \frac{\mu(B_s(x))}{\mass(B_s(x))},\qquad x \in \XX,r \in (0,\infty], 
\]
is well behaved in the following sense:{
for every $R>0,\sigma\ge 1$ there exists a constant $C(R,\sigma)>0$ depending only on the doubling constant, $R$ and $\sigma$ so that the following weak-$L^1$ estimate holds
\begin{equation}
\mass\big( \{x \in B \colon \mathcal M_{2\sigma r} (\mu)(x) >\lambda \} \big) \le C(R,\sigma)\frac{\mu( 3\sigma B)}\lambda,\qquad \forall\lambda>0,
\label{eq:maximal operator BV}
\end{equation}
for every ball $B\subseteq \XX$ with radius $r \le R$}. This is classical and follows by a standard argument using a $5r$-Vitali covering and the doubling assumption on the measure. {If, $R=\infty$, we shall simply denote $\mathcal{M}(\mu)\coloneqq \mathcal{M}_\infty(\mu)$}.

We say that $\XX$ supports a \emph{weak local $(1,1)$-Poincar\'e inequality}, provided there is $\tau_p\ge 1$ and, for every $R>0$, a constant $C(R)>0$ so that, for any $f \in \LIP_{\rm loc}(\XX)$, it holds
\[
\dashint_{B_r(x)}|f-f_{x,r}|\,\dd \mass \le C(R) r\dashint_{B_{\tau_p r}(x)} \lip(f)\,\dd \mass,\qquad \text{for very }x \in \XX,r\in(0,R),
\]
where $f_{x,r}\coloneqq \dashint_{B_r(x)} f\,\dd\mass$. We remark that usually this definition is given with the concept of \emph{upper gradient}, rather than with the notion of local Lipschitz constant on the right-hand side. However, in light of the density result of \cite{AmbrosioGigliSavaredensity}, the two approaches are perfectly equivalent on arbitrary metric measure spaces (this was already known on doubling spaces as a previous result of \cite{Keith03}). The above Poincar\'e inequality upgrades via approximation to functions of bounded variation (see, e.g. \cite[Lemma 1.13]{BonicattoPasqualettoRajala20}): for every $f \in \BV(\XX)$, it holds
\begin{equation}
\int_{B_r(x)}|f-f_{x,r}|\,\dd \mass \le C(R) r|\DIFF f|(B_{\tau_p r}(x)),\qquad  \text{for very }x \in \XX,r\in(0,R). \label{eq:Poincare BV}
\end{equation}
Here we stated the inequality in non-averaged form, for convenience in the rest of the work.

\begin{defn}[PI spaces]
    A metric measure space $(\XX,\dist,\mass)$ is a PI space provided it is locally doubling and supports a weak local $(1,1)$-Poincar\'e inequality.
\end{defn}
We start stating some fine properties of sets of finite perimeter following \cite{Ambrosio2001,Ambrosio2002} on PI spaces. 
Given a Borel subset $E\subseteq \XX$ of a doubling metric measure space and a point $x \in \XX$, let us define the Borel set
\[
\partial^*E \coloneqq \Big\{ x \in \XX \colon \overline{D}(E,x)  >0, \overline{D}(E^c,x) >0\Big\}\subseteq \partial E,
\]
where the upper density and lower density of $E$ at $x$ are respectively defined
as
\[
\overline{D}(E,x)\coloneqq  \limsup_{r\searrow 0} \frac{\mass(E\cap B_r(x))}{\mass(B_r(x))},\qquad  \underline{D}(E,x)\coloneqq \liminf_{r\searrow 0} \frac{\mass({E}\cap B_r(x))}{\mass(B_r(x))} .
\]
When the two above are equal, we denote $D(E,x)$ their common value, called simply the density of $E$ at $x$. The set of $t$ density points of a set $E$, for $t\in[0,1]$ is then defined as
\[
E^{(t)}\coloneqq \{x \in \XX \colon D(E,x)=t\}.
\]
We call $E^{(0)},E^{(1)}$ the \emph{measure theoretic exterior} and \emph{interior}, respectively.
Given a measurable function $f \colon \XX \to \RR$, we define its \emph{approximate lower} and \emph{upper limits} 
\begin{align*}
    & f^\wedge(x) \coloneqq {\rm ap }\liminf_{y\to x}f(y) \coloneqq \sup\big\{ t \in \bar \RR \colon D(\{f<t\},x)=0 \big\},\\
    & f^\vee(x) \coloneqq {\rm ap }\limsup_{y\to x}f(y) \coloneqq \inf\big\{ t \in \bar \RR \colon D(\{f>t\},x)=0 \big\},
\end{align*}
here $\bar \RR = \RR \cup \{\pm \infty\}$ and we adopt the convention $\sup \varnothing = -\infty , \inf \varnothing = +\infty$. We then say that $f$ is approximately continuous at $x$ if $f^\wedge (x) = f^\vee(x) \eqqcolon {\rm ap }\lim_{y\to x}f(y)$. In the case of Borel maps $u \colon \XX \to \YY$ valued on a metric space $(\YY,\rho)$, we say that $u$ is approximately continuous at $x$ if there exists $z \in \YY$ so that $D(\{\rho(u(\cdot),z)>\epsilon\},x)=0$ for all $\epsilon>0$. Denote $\tilde u(x) \coloneqq z$, being $z$ unique if it exists. It can be readily checked that this notion is equivalent to the previous when $\RR=\YY$. Now, set
\[
J_f \coloneqq \{ f^\wedge < f^\vee \},
\]
and define the \emph{precise representative}
\[ \bar f(x) = \frac{f^\wedge(x)+f^\vee(x)}{2},\qquad \forall x \in \XX, \]
adopting the convention that $+\infty-\infty =0$ occuring when $x \notin J_f$. 

We recall the notion of \emph{codimension-one Hausdorff measure}  $\mathcal H^h$. For every $A\subseteq \XX$ and $\delta>0$, set
\[
    \mathcal H_\delta ^h  (A)\coloneqq \inf\Big\{\sum_{i\in\NN} \frac{\mass(B_{r_i}(x_i))}{r_i} \colon A\subseteq \bigcup_{i\in\NN}B_{r_i}(x_i), r_i\le \delta \Big\}.
\]
Then, define
\[
   \mathcal H^h(A)\coloneqq \lim_{\delta \searrow 0}\mathcal H^h_\delta(A),\qquad \forall A\subseteq \XX,
\]
which, on PI spaces, is a Borel regular outer measure. From \cite{Ambrosio2001,Ambrosio2002} and \cite{AmbrosioMirandaPallara04}, we know the validity of the following result. This has been established under a global doubling assumption but actually holds under the following hypothesis.

\begin{thm}[Representation of the perimeter]\label{thm:repr_per}
Let \((\XX,\dist,\mass)\) be a PI space and let \(\varnothing\neq\Omega\subseteq\XX\) be an open set. Let \(E\subseteq\Omega\) be a Borel set having finite perimeter in \(\Omega\).
Then \({\rm Per}(E,\cdot)\) is concentrated on \(\partial^*E\cap\Omega\) and it holds that
\(\HH^h(\partial^*E\cap\Omega)<+\infty\). Moreover, there are constants
\(\gamma,C>0\) and a Borel function \(\theta_{E}\colon\Omega\to[\gamma,C]\) such that
\[
{\rm Per}(E,\cdot)=\theta_{E}\HH^h\mres{\partial^*E\cap\Omega}.
\]
Here, $C$ and $\gamma$ depend on the PI-parameters.
\end{thm}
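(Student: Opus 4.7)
Following the classical approach for sets of finite perimeter in PI spaces \cite{Ambrosio2001,Ambrosio2002,AmbrosioMirandaPallara04}, my plan is to establish two-sided density estimates for $\per(E,\cdot)$ with respect to the codimension-one Hausdorff measure $\HH^h$. The starting point is a relative isoperimetric inequality: applying \eqref{eq:Poincare BV} with $f=\nchi_E$, and using that $(\nchi_E)_{x,r}=\mass(E\cap B_r(x))/\mass(B_r(x))$ together with the identity
\[
\int_{B_r(x)}|\nchi_E-(\nchi_E)_{x,r}|\,\dd\mass = 2\,\frac{\mass(E\cap B_r(x))\,\mass(B_r(x)\setminus E)}{\mass(B_r(x))},
\]
I would deduce, via doubling,
\[
\min\{\mass(E\cap B_r(x)),\,\mass(B_r(x)\setminus E)\}\le C\,r\,\per(E,B_{\tau_p r}(x))
\]
uniformly on bounded subsets of $\Omega$ below a fixed scale.

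The next step is to derive two-sided density estimates. For the lower bound, at any $x\in\partial^*E$ one has $\overline D(E,x),\overline D(E^c,x)>0$, and doubling then forces both $\mass(E\cap B_r(x))/\mass(B_r(x))$ and $\mass(B_r(x)\setminus E)/\mass(B_r(x))$ to be bounded below by a positive constant along some sequence $r_n\downarrow 0$; the isoperimetric inequality then yields
\[
\liminf_{r\downarrow 0}\,\frac{r\,\per(E,B_{\tau_p r}(x))}{\mass(B_r(x))}\ge \gamma>0,\qquad \forall x\in\partial^*E.
\]
The corresponding upper bound
\[
\limsup_{r\downarrow 0}\,\frac{r\,\per(E,B_r(x))}{\mass(B_r(x))}\le C, \qquad \per(E,\cdot)\text{-a.e.\ }x,
\]
is the main technical obstacle. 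My plan is to apply the coarea formula to the $1$-Lipschitz function $y\mapsto (r-\dist(x,y))^+$, which controls $\int_0^r\per(B_t(x),\cdot)\,\dd t$ in terms of $\mass(B_r(x))$; combining this with the BV-intersection estimate $\per(E\cap B_t(x),\cdot)\le \per(E,\cdot)+\per(B_t(x),\cdot)$ valid for a.e.\ $t\in(0,r)$, and feeding the result into the weak-$L^1$ maximal inequality \eqref{eq:maximal operator BV} applied to $\per(E,\cdot)$, should extract the upper density bound at $\per(E,\cdot)$-a.e.\ point.

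With the density estimates in hand, the isoperimetric inequality forces $\per(E,\cdot)$-a.e.\ $x$ to satisfy both $\overline D(E,x)>0$ and $\overline D(E^c,x)>0$, which gives the concentration claim $\per(E,\Omega\setminus\partial^*E)=0$. A $5r$-Vitali covering argument on $\partial^*E\cap\Omega$ exploiting the lower density bound converts it into $\HH^h(\partial^*E\cap\Omega)\le C\gamma^{-1}\per(E,\Omega)<\infty$. Finally, the differentiation theorem for doubling Radon measures produces a Borel density $\theta_E$ with $\per(E,\cdot)=\theta_E\HH^h\mres(\partial^*E\cap\Omega)$, and the $\liminf/\limsup$ estimates above translate, via the same differentiation theorem, into the pointwise bounds $\gamma\le\theta_E\le C$ claimed in the statement.
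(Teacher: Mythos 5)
The paper does not actually prove this theorem: it cites \cite{Ambrosio2001,Ambrosio2002,AmbrosioMirandaPallara04} and only remarks that the result, established there under a global doubling hypothesis, extends to the local setting. So your sketch must be measured against the classical argument in those references. Its global architecture is correct and follows the same route: relative isoperimetric inequality from the $(1,1)$-Poincar\'e inequality (the algebraic identity you wrote is exact), density estimates, Vitali covering for $\HH^h(\partial^*E\cap\Omega)<+\infty$, and differentiation of measures to extract $\theta_E$.

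The genuine gap is in the lower density estimate. From $\overline D(E,x)>0$ and $\overline D(E^c,x)>0$ alone you cannot conclude what you wrote. At best, an intermediate-value type argument (using that $r\mapsto\mass(E\cap B_r)/\mass(B_r)$ has at most countably many jumps) produces \emph{some} sequence $r_n\downarrow 0$ on which both volume ratios exceed an $x$-\emph{dependent} constant; fed into the isoperimetric inequality this only yields
\[
\limsup_{r\downarrow 0}\frac{r\,\per(E,B_{\tau_p r}(x))}{\mass(B_r(x))}\ge\gamma(x)>0,
\]
a $\limsup$ with a non-uniform constant, not the $\liminf$ with universal $\gamma$ that you claim. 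That suffices for the Vitali estimate $\HH^h(\partial^*E)<+\infty$ (decomposing $\partial^*E$ into the level sets $\{\gamma(x)\ge 1/k\}$), but it does \emph{not} give the uniform pinching $\theta_E\in[\gamma,C]$ stated in the theorem, and it does not by itself prove that $\per(E,\cdot)$ is concentrated on $\partial^*E$. In Ambrosio's proof the missing piece is a Gronwall-type differential inequality for $g(r)\coloneqq\min\{\mass(E\cap B_r(x)),\mass(B_r(x)\setminus E)\}$: one couples the relative isoperimetric inequality for $E\cap B_r(x)$ with the coarea bound on $g'(r)$ and integrates, obtaining that at $\per(E,\cdot)$-a.e.\ $x$ \emph{all} sufficiently small radii satisfy both
\[
\min\{\mass(E\cap B_r(x)),\mass(B_r(x)\setminus E)\}\ge c\,\mass(B_r(x))\qquad\text{and}\qquad \frac{r\,\per(E,B_r(x))}{\mass(B_r(x))}\ge\gamma,
\]
with $c,\gamma>0$ depending only on the PI parameters. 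This is simultaneously where the concentration of $\per(E,\cdot)$ on $\partial^*E$ and the universal lower bound on $\theta_E$ come from; your sketch has no substitute for it. The coarea-based upper bound you outline is, modulo details, the correct mechanism, but the invocation of the maximal-function weak-$L^1$ estimate is not how the uniform upper bound is usually obtained either: one gets $\per(E,B_r(x))\le C\,\mass(B_r(x))/r$ directly from the coarea control $\int_0^r\per(B_t(x),\XX)\,\dd t\le\mass(B_r(x))$ applied at a good radius $t\in(r/2,r)$.
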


\begin{rem}\label{rem:S max}{\rm
From the above and \cite[Theorem 5.4]{Ambrosio2002}, we point out the following important fact. There exists a natural number \(n_0\in\NN\), depending only on the PI-parameters so that \(\HH^h(\bigcap_{i=1,...,S} E_i)=0\) whenever $S\ge n_0$ and $(E_i)\subseteq \Omega$ are disjoint sets of finite perimeter.
\fr}\end{rem}

For $f \in \BV(\XX)$ defined on PI metric measure space $\XX$ we have from \cite{KinnunenKorteShanmugaligam14} that 
\[
-\infty < f^\wedge \le f^\vee <+\infty,\qquad \HH^h\text{-a.e.}
\]
In particular, defining $\XX_f\coloneqq \{-\infty < f^\wedge \le f^\vee <+\infty\}$, we have $|\DIFF f|(\XX\setminus \XX_f)=0$. Moreover,  we recall from \cite{AmbrosioMirandaPallara04}
that \(J_f\) is a countable union of essential boundaries of sets of finite perimeter, as well as the following identity:
\begin{equation}
  |\DIFF f|\mres{J_f} = (f^\vee-f^\wedge)\theta_f \HH^h\mres{J_f},\label{eq:Df on Sf}  
\end{equation}
where $\theta_f\colon J_f\to[\gamma,C]$ is the Borel function defined as $ \theta_f(x)= \dashint_{f^\wedge(x)}^{f^\vee(x)}\theta_{\{f>s\}}(x)\,\dd s$.
Notice that if \(E\subseteq\XX\) is of finite perimeter and \(\mass(E)<+\infty\), then \(J_{\chi_E}=\partial^*E\) and \(\theta_{\chi_E}=\theta_E\) on \(\partial^*E\).
We shall use fine results such as \eqref{eq:Df on Sf} also for functions of locally bounded variations defined on open sets. Given the local nature of these properties, their verification is the same.
\subsection{Isotropic PI spaces}
Following \cite{AmbrosioMirandaPallara04}, we say that a PI space \((\XX,\dist,\mass)\) is \emph{isotropic} if
\begin{equation}\label{eq:isotropic}
\theta_E=\theta_F\quad\text{ holds }\mathcal H^h\text{-a.e.\ on }\partial^*E\cap\partial^*F
\end{equation}
whenever \(E,F\subseteq\XX\) are sets of finite perimeter satisfying \(E\subseteq F\).
\begin{lem}\label{lem:glue_theta}
Let \((\XX,\dist,\mass)\) be an isotropic PI space. Let \((E_i)_i\) be a sequence of sets of finite perimeter in \(\XX\) and define \(\Gamma\coloneqq\bigcup_{i\in\NN}\partial^*E_i\).
Then there exists a Borel function \(\theta\colon\Gamma\to[\gamma,C]\) that is unique up to \(\HH^h\)-a.e.\ equality and satisfies the following property: given any \(f\in\BV(\XX)\),
\[
\theta_f=\theta\quad\text{ holds }\HH^h\text{-a.e.\ on }J_f\cap\Gamma.
\]
\end{lem}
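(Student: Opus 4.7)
The plan is to define $\theta$ explicitly by disjointifying the family $(\partial^*E_i)_i$, then derive the defining property on each piece via a strengthened isotropy statement together with the integral representation of $\theta_f$, and finally read off uniqueness from the property evaluated on indicator functions.

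\textbf{Construction of $\theta$.} Set $\Gamma_1 \coloneqq \partial^* E_1$ and $\Gamma_i \coloneqq \partial^* E_i \setminus \bigcup_{j<i}\partial^* E_j$ for $i \geq 2$; these are disjoint Borel sets covering $\Gamma$. Define $\theta \coloneqq \theta_{E_i}$ on $\Gamma_i$. By \cref{thm:repr_per}, $\theta$ is Borel on $\Gamma$ with values in $[\gamma, C]$.

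\textbf{Strengthened isotropy (main hurdle).} The key intermediate step is that, for any sets of finite perimeter $A, B \subseteq \XX$,
\[
\theta_A = \theta_B \quad \HH^h\text{-a.e.\ on } \partial^* A \cap \partial^* B.
\]
Applying \eqref{eq:isotropic} to the chains $A\cap B \subseteq A,B$ and $A,B \subseteq A\cup B$ already yields the equality on $(\partial^* A \cap \partial^* B) \cap (\partial^*(A\cap B) \cup \partial^*(A\cup B))$. It remains to show that the exceptional set $S \coloneqq \partial^* A \cap \partial^* B \setminus (\partial^*(A\cap B) \cup \partial^*(A \cup B))$ is $\HH^h$-null, which is the main obstacle; I would establish it from the modularity identity $\per(A,\cdot) + \per(B,\cdot) = \per(A\cap B,\cdot) + \per(A\cup B,\cdot)$ (well-known on PI spaces, via coarea applied to $\nchi_A+\nchi_B=\nchi_{A\cap B}+\nchi_{A\cup B}$ together with the opposite submodularity inequality): on $S$ the right-hand measure vanishes by definition, while the left, by \cref{thm:repr_per}, is at least $2\gamma\HH^h(S)$, forcing $\HH^h(S)=0$.

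\textbf{The defining property.} Fix $f\in\BV(\XX)$ and $i\in\NN$. It suffices to prove $\theta_f=\theta_{E_i}$ $\HH^h$-a.e.\ on $J_f\cap\partial^* E_i$. Directly from the definitions of $f^\wedge,f^\vee$, for every $x\in J_f$ and every $s\in(f^\wedge(x),f^\vee(x))$ one has $x\in\partial^*\{f>s\}$. By the coarea formula, $\{f>s\}$ has finite perimeter for $\mathcal L^1$-a.e.\ $s$, and the strengthened isotropy applied to $E_i$ and $\{f>s\}$ provides an $\HH^h$-null set $N_s\subseteq \partial^* E_i\cap\partial^*\{f>s\}$ outside of which $\theta_{\{f>s\}}=\theta_{E_i}$. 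Using $\sigma$-finiteness of $\HH^h$ on bounded sets (from \cref{thm:repr_per}) and Fubini--Tonelli on $(J_f\cap\partial^* E_i,\HH^h)\otimes(\RR,\mathcal L^1)$, the quantifiers can be swapped: for $\HH^h$-a.e.\ $x$ one obtains $\theta_{\{f>s\}}(x)=\theta_{E_i}(x)$ for $\mathcal L^1$-a.e.\ $s\in(f^\wedge(x),f^\vee(x))$. Plugging this into $\theta_f(x)=\dashint_{f^\wedge(x)}^{f^\vee(x)}\theta_{\{f>s\}}(x)\,\dd s$ gives $\theta_f(x)=\theta_{E_i}(x)=\theta(x)$ on $\Gamma_i$, as desired.

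\textbf{Uniqueness.} If $\theta'$ is another function with the defining property, fix $\bar x\in\XX$ and apply it with $f=\nchi_{E_i\cap B_R(\bar x)}\in\BV(\XX)$ (valid for $\mathcal L^1$-a.e.\ $R>0$, since balls in a PI space have finite perimeter for a.e.\ radius). Then $J_f=\partial^*(E_i\cap B_R(\bar x))$ up to $\HH^h$-null and $\theta_f=\theta_{E_i\cap B_R(\bar x)}$; a further application of the strengthened isotropy gives $\theta_{E_i\cap B_R(\bar x)}=\theta_{E_i}$ $\HH^h$-a.e.\ on $\partial^* E_i\cap B_R(\bar x)$, so letting $R\to\infty$ yields $\theta'=\theta_{E_i}=\theta$ $\HH^h$-a.e.\ on every $\partial^* E_i$, hence on $\Gamma$.
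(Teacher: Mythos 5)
Your overall plan (disjointify, prove a ``strengthened isotropy'' $\theta_A=\theta_B$ on $\partial^*A\cap\partial^*B$ for arbitrary sets of finite perimeter $A,B$, then feed it into the integral formula for $\theta_f$ via a Fubini argument, and finally read off uniqueness from indicator functions) is the same as the paper's, and your construction, defining-property argument, and uniqueness argument are all sound given the strengthened isotropy. The problem is in the one step you yourself flag as ``the main obstacle'': the proposed modularity identity $\per(A,\cdot)+\per(B,\cdot)=\per(A\cap B,\cdot)+\per(A\cup B,\cdot)$, and the consequence $\HH^h(S)=0$ with $S=\partial^*A\cap\partial^*B\setminus(\partial^*(A\cap B)\cup\partial^*(A\cup B))$, are \emph{both false} in general isotropic PI spaces. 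Only the submodularity inequality $\per(A\cap B,\cdot)+\per(A\cup B,\cdot)\le\per(A,\cdot)+\per(B,\cdot)$ is available (via coarea applied to $\nchi_A+\nchi_B$ and subadditivity of total variation); there is no ``opposite'' inequality to invoke.

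A concrete counterexample is the tripod $\XX$: three half-lines $R_1,R_2,R_3$ glued at a point $o$, with geodesic distance and one-dimensional Hausdorff measure. This is a PI space; one computes $\HH^h(\{o\})=2$, and for every set $E$ of finite perimeter with $o\in\partial^*E$ (so $E$ occupies, near $o$, either one or two of the three rays) one finds $\per(E,\{o\})=1$, hence $\theta_E(o)=1/2$ independently of $E$ --- so $\XX$ is isotropic. Now take $A\coloneqq R_1\cap B_1(o)$ and $B\coloneqq(R_2\cup R_3)\cap B_1(o)$. Then $A\cap B=\varnothing$ and $A\cup B=B_1(o)$, so both $\partial^*(A\cap B)$ and $\partial^*(A\cup B)$ avoid $o$; consequently $S\supseteq\{o\}$ has $\HH^h(S)\ge 2>0$, and $\per(A,\{o\})+\per(B,\{o\})=2$ while $\per(A\cap B,\{o\})+\per(A\cup B,\{o\})=0$. (Note that $\theta_A(o)=\theta_B(o)$ still holds, so the conclusion of strengthened isotropy is true here --- it just does not follow from your reasoning.) This failure is not incidental: the paper explicitly remarks that the Federer property $\HH^h(\partial^*E\setminus E^{(1/2)})=0$ is strictly stronger than isotropicity, and $\HH^h(S)=0$ for all $A,B$ would essentially force Federer-type behaviour.

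The paper's Step~1 avoids this entirely by a different cover. Instead of trying to annihilate $S$, it observes that
\[
\partial^*E\cap\partial^*F\subseteq\partial^*(E\cap F)\cup\partial^*(E^c\cap F)\cup\partial^*(E\cap F^c),
\]
which follows from applying the elementary inclusion $\partial^*(A\cap B)\cup\partial^*(A\cup B)\subseteq\partial^*A\cup\partial^*B$ to $(A,B)=(E\cap F,E^c\cap F)$ and $(A,B)=(E\cap F,E\cap F^c)$. On each of the three pieces, ordinary isotropicity (together with the invariance $\theta_{E^c}=\theta_E$, $\partial^*E^c=\partial^*E$) gives $\theta_E=\theta_F$ $\HH^h$-a.e. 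In the tripod example above, the piece $\partial^*(E^c\cap F)$ is exactly what catches the point $o$ that your set $S$ misses. If you replace your modularity step by this covering argument, the rest of your proof (the Fubini reduction and the uniqueness via $f=\nchi_{E_i\cap B_R}$) goes through.
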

\begin{proof}
We subdivide the proof into three steps.
\medskip\\\textbf{Step 1.} We claim that if \(E,F\subseteq\XX\) are given sets of finite perimeter, then we have that
\begin{equation}\label{eq:improved_isotr_aux1}
\theta_E=\theta_F\quad\text{ holds }\HH^h\text{-a.e.\ on }\partial^*E\cap\partial^*F.
\end{equation}
Notice that \eqref{eq:improved_isotr_aux1} is -- a priori -- not guaranteed by the isotropicity assumption, since we are not requiring \(E\subseteq F\) nor \(F\subseteq E\).
By trivial density estimates, for any two sets $A,B\subseteq\XX$ of finite perimeter we get
\begin{equation}\label{sdcvascs}
    \partial^*(A\cap B)\cup\partial^* (A\cup B)\subseteq\partial^*A\cup\partial^* B
\end{equation}
and, being the space isotropic,
\[\begin{split}
\theta_{A\cap B}=\theta_A&\quad\text{ holds }\HH^h\text{-a.e.\ on }\partial^*A\cap\partial^*(A\cap B),\\
\theta_{A\cap B}=\theta_B&\quad\text{ holds }\HH^h\text{-a.e.\ on }\partial^*B\cap\partial^*(A\cap B),
\end{split}\]
so that 
\begin{equation}\label{eq:isotr_equiv_aux2}
\theta_A=\theta_B\quad\text{ holds }\HH^h\text{-a.e.\ on }\partial^*A\cap\partial^*B\cap\partial^*(A\cap B).
\end{equation}
Plugging \((A,B)=(E\cap F,E^c\cap F)\) and \((A,B)=(E\cap F,E\cap F^c)\) in \eqref{sdcvascs}, we get
\begin{equation}\label{eq:isotr_equiv_aux1}
\partial^*E\cap\partial^*F\subseteq\partial^*(E\cap F)\cup\partial^*(E^c\cap F)\cup\partial^*(E\cap F^c).
\end{equation}
Plugging \((A,B)=(E,F)\), \((A,B)=(E^c,F)\) and \((A,B)=(E,F^c)\) in \eqref{eq:isotr_equiv_aux2}, we obtain
\begin{equation}\label{eq:isotr_equiv_aux3}\begin{split}
\theta_E=\theta_F&\quad\text{ holds }\HH^h\text{-a.e.\ on }\partial^*E\cap\partial^*F\cap\partial^*(E\cap F),\\
\theta_{E^c}=\theta_F&\quad\text{ holds }\HH^h\text{-a.e.\ on }\partial^*({E^c})\cap\partial^*F\cap\partial^*(E^c\cap F),\\
\theta_E=\theta_{F^c}&\quad\text{ holds }\HH^h\text{-a.e.\ on }\partial^*E\cap\partial^*({F^c})\cap\partial^*(E\cap F^c),
\end{split}\end{equation}
respectively. Since \(\theta_A\) and \(\partial^*A\) are invariant under passing to the complement of \(A\), we deduce from \eqref{eq:isotr_equiv_aux1}
and \eqref{eq:isotr_equiv_aux3} that \(\theta_E=\theta_F\) holds \(\HH^h\)-a.e.\ on \(\partial^*E\cap\partial^*F\), thus proving \eqref{eq:improved_isotr_aux1}.
\medskip\\\textbf{Step 2.} Fix any \(f\in\BV(\XX)\). Fix a dense sequence \((\lambda_n)\) in \(\RR\) such that each \(\{\bar f>\lambda_n\}\) is of finite perimeter
and a Borel partition \((G_n)\) of \(J_f\) with \(G_n\subseteq\{f^\wedge<\lambda_n<f^\vee\}\). We claim that
\begin{equation}\label{eq:improved_isotr_aux2}
\theta_f=\sum_{n\in\NN}\nchi_{G_n}\theta_{\{\bar f>\lambda_n\}}\quad\text{ holds }\HH^h\text{-a.e.\ on }J_f.
\end{equation}
For any \(n\in\NN\), define \(\Gamma_n\coloneqq\big\{(x,t)\in\XX\times\RR\colon x\in G_n,\,t\in(f^\wedge(x),f^\vee(x))\big\}\). Notice that
\begin{equation}\label{eq:isotr_fc_aux}
\Gamma_n=\bigcup_{\substack{q,\tilde q\in\QQ:\\q<\lambda_n<\tilde q}}G_n^{q,\tilde q}\times(q,\tilde q),
\quad\text{ where we set }G_n^{q,\tilde q}\coloneqq\big\{x\in G_n\colon f^\wedge(x)<q,\,f^\vee(x)>\tilde q\big\}.
\end{equation}
If \(t\in(q,\tilde q)\) is given, then \(G_n^{q,\tilde q}\subseteq\partial^*\{\bar f>t\}\), so that
\(\theta_{\{\bar f>t\}}=\theta_{\{\bar f>\lambda_n\}}\) holds \(\HH^h\)-a.e.\ on \(G_n^{q,\tilde q}\)
by the isotropicity assumption. Thanks to \eqref{eq:isotr_fc_aux} and to Fubini's Theorem, we deduce that
for \(\HH^n\)-a.e.\ \(x\in G_n\) it holds that \(\theta_{\{\bar f>t\}}(x)=\theta_{\{\bar f>\lambda_n\}}(x)\)
for \(\mathcal L^1\)-a.e.\ \(t\in(f^\wedge(x),f^\vee(x))\). Therefore,
\[
\theta_f(x)=\dashint_{f^\wedge(x)}^{f^\vee(x)}\theta_{\{\bar f>t\}}(x)\,\dd t=\theta_{\{\bar f>\lambda_n\}}(x)
\quad\text{ for }\HH^h\text{-a.e.\ }x\in G_n,
\]
whence \eqref{eq:improved_isotr_aux2} follows.
\medskip\\\textbf{Step 3.} Thanks to \textbf{Step 1}, we can find a Borel function \(\theta\colon\Gamma\to[\gamma,C]\) such that
\begin{equation}\label{eq:constr_theta}
\theta_{E_i}=\theta\quad\text{ holds }\HH^h\text{-a.e.\ on }\partial^*E_i,\text{ for every }i\in\NN.
\end{equation}
Finally, we claim that if \(f\in\BV(\XX)\) is given, then for every \(i\in\NN\) we have that
\begin{equation}\label{eq:improved_isotr_aux3}
\theta_f=\theta_{E_i}\quad\text{ holds }\HH^h\text{-a.e.\ on }J_f\cap\partial^*E_i,
\end{equation}
which (recalling \eqref{eq:constr_theta}) yields the statement. To prove the claim, fix a dense sequence \((\lambda_n)\) in \(\RR\) such that \(\{\bar f>\lambda_n\}\)
is of finite perimeter for every \(n\in\NN\). Take a Borel partition \((G_n)\) of \(J_f\) such that \(G_n\subseteq\{f^\wedge<\lambda_n<f^\vee\}\)
for every \(n\in\NN\). Then for any \(n\in\NN\) we have that
\[
\theta_{\{\bar f>\lambda_n\}}=\theta_{E_i}\quad\text{ holds }\HH^h\text{-a.e.\ on }G_n\cap\partial^*E_i\subseteq\partial^*\{\bar f>\lambda_n\}\cap\partial^*E_i
\]
thanks to \textbf{Step 1}. Using \textbf{Step 2}, we thus conclude that \(\HH^h\)-a.e.\ on \(J_f\cap\partial^*E_i\) we have
\[
\theta_f=\sum_{n\in\NN}\nchi_{G_n\cap\partial^*E_i}\theta_{\{\bar f>\lambda_n\}}=\sum_{n\in\NN}\nchi_{G_n\cap\partial^*E_i}\theta_{E_i}=\theta_{E_i},
\]
which proves the validity of \eqref{eq:improved_isotr_aux3}.
\end{proof}
\begin{cor}\label{cor:isotrop_BV}
Let \((\XX,\dist,\mass)\) be an isotropic PI space. Let \(f,g\in\BV(\XX)\) be given. Then
\begin{equation}\label{eq:isotrop_BV_cl1}
\theta_f=\theta_g\quad\text{ holds }\HH^h\text{-a.e.\ on }J_f\cap J_g.
\end{equation}
In particular, if \(E,F\subseteq\XX\) are given sets of finite perimeter, then
\begin{equation}\label{eq:isotrop_BV_cl2}
\theta_E=\theta_F\quad\text{ holds }\HH^h\text{-a.e.\ on }\partial^*E\cap\partial^*F.
\end{equation}
\end{cor}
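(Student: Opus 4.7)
The plan is to reduce both claims to Lemma \ref{lem:glue_theta}, which already does the substantive work.

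For \eqref{eq:isotrop_BV_cl1}, I would apply Lemma \ref{lem:glue_theta} to a single sequence of sets of finite perimeter large enough to detect the jump sets of both $f$ and $g$ simultaneously. Recalling from the discussion preceding \eqref{eq:Df on Sf} that $J_f$ is, up to $\HH^h$-null sets, a countable union $\bigcup_i \partial^* A_i$ of essential boundaries of sets of finite perimeter, and likewise $J_g\subseteq\bigcup_j \partial^* B_j$ modulo $\HH^h$-null, I would concatenate $(A_i)_i$ and $(B_j)_j$ into a single sequence $(E_i)_{i\in\NN}$ and set $\Gamma\coloneqq\bigcup_i\partial^* E_i$. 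By construction $\Gamma$ contains $J_f\cup J_g$ up to an $\HH^h$-null set. Lemma \ref{lem:glue_theta} then supplies a Borel function $\theta\colon\Gamma\to[\gamma,C]$ with the property that $\theta_h=\theta$ holds $\HH^h$-a.e.\ on $J_h\cap\Gamma$ for every $h\in\BV(\XX)$. Specializing to $h=f$ and $h=g$ and intersecting, we obtain $\theta_f=\theta=\theta_g$ $\HH^h$-a.e.\ on $J_f\cap J_g$, which is exactly \eqref{eq:isotrop_BV_cl1}.

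For \eqref{eq:isotrop_BV_cl2}, I would simply observe that the statement coincides verbatim with the claim \eqref{eq:improved_isotr_aux1} proved in Step 1 of Lemma \ref{lem:glue_theta}, where the short argument based on the density inclusion \eqref{sdcvascs} applied to the pairs $(E\cap F,E^c\cap F)$ and $(E\cap F,E\cap F^c)$ produces the equality of densities without requiring $E\subseteq F$. One could alternatively derive it from \eqref{eq:isotrop_BV_cl1} by taking $f=\chi_E$ and $g=\chi_F$, using $J_{\chi_E}=\partial^* E$, $\theta_{\chi_E}=\theta_E$ and the analogous identities for $F$; this would require first localizing by intersecting with a countable cover of $\XX$ by balls in order to ensure $\chi_E,\chi_F\in\BV(\XX)$ (i.e., to arrange finite measure), but quoting Step 1 bypasses this detour.

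Since all the real content has been absorbed into Lemma \ref{lem:glue_theta}, I do not expect a genuine obstacle here. The only point that requires mild care is the choice of the sequence $(E_i)_i$, which must be large enough to exhaust $J_f\cup J_g$ up to $\HH^h$-null sets; this is precisely what is granted by the structural fact that jump sets of BV functions on PI spaces are countable unions of essential boundaries of sets of finite perimeter.
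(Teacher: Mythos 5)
Your proposal matches the paper's proof: both apply Lemma \ref{lem:glue_theta} to a countable family of essential boundaries covering $J_f\cup J_g$ and intersect the resulting identities, and both derive \eqref{eq:isotrop_BV_cl2} either by quoting Step 1 directly or by specializing \eqref{eq:isotrop_BV_cl1} to characteristic functions after a localization. The reasoning and level of detail are essentially the same as in the paper.
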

\begin{proof}
Recall that \(J_f\cup J_g\) is a countable union of essential boundaries. Hence, we know from Lemma \ref{lem:glue_theta} that there exists a Borel function
\(\theta\colon J_f\cup J_g\to[\gamma,C]\) such that
\[
\theta_f=\theta\quad\text{ holds }\HH^h\text{-a.e.\ on }J_f,\qquad\theta_g=\theta\quad\text{ holds }\HH^h\text{-a.e.\ on }J_g,
\]
whence \eqref{eq:isotrop_BV_cl1} follows. Finally, \eqref{eq:isotrop_BV_cl2} follows either from (a localized version of) \eqref{eq:isotrop_BV_cl1} or from \textbf{Step 1} of the proof of Lemma \ref{lem:glue_theta}.
\end{proof}
Notice that, thanks to Theorem \ref{thm:repr_per}, it can be checked that if $\XX$ is an isotropic PI space, $\varnothing \neq \Omega\subseteq \XX$ is open and $f,g\in\BV(\Omega)$ are given, then \eqref{eq:isotrop_BV_cl1} localizes on $\Omega$:
\begin{equation}
    \theta_f = \theta_g\quad\text{ holds }\HH^h\mres\Omega\text{-a.e.\ on }J_f\cap J_g.
\label{eq:isotropicOmega}
\end{equation}
Finally, following \cite{BonicattoPasqualettoRajala20}, we say that \((\XX,\dist,\mass)\) has the \emph{two-sidedness property} provided 
\[
\mathcal H^h(\partial^*E\cap\partial^*F\cap\partial^*(E\cup F))=0\]
whenever \(E,F\subseteq\XX\) are disjoint sets of finite perimeter. For a given PI space \((\XX,\dist,\mass)\), the following implications hold:
\[
 \substack{\mathcal H^h(\partial^*E\setminus E^{(1/2)})=0\text{ for} \\ \text{all }E\text{ of finite perimeter}}
\quad\Longrightarrow\quad\text{two-sidedness property}\quad\Longrightarrow\quad\text{isotropic.}
\]
The converse implications are, in general, false.
\section{Three notions of \texorpdfstring{$\BV$}{BV}-maps}
In this part, we give three definitions of maps of bounded variation, discuss their main properties and prove our main result Theorem \ref{thm:main result 1} as a consequence of Theorem \ref{thm:V is a measure} below.

\subsection{Post-composition with Lipschitz functions}
\begin{defn}[BV-maps via post-composition]\label{def:Du Resh}
Let $(\XX,\dist,\mass)$ be a metric measure space, let $\varnothing \neq \Omega\subseteq \XX$ be open and let $(\YY,\rho,\bar y)$ be a pointed separable metric space. Let $\mathcal{F}\subseteq \LIP(\YY)$ be a family of $1$-Lipschitz functions.

We say that $u \in L^1_{\rm loc}(\Omega,\YY)$ belongs to the space $\BV_{\mathcal{F},{\rm loc}}(\Omega,\YY)$ provided that there exists a non-negative Borel measure $\mu$ in $\Omega$ such that 
\begin{equation}
    |\DIFF (\varphi \circ u)|\le \mu,\qquad\text{for every $\varphi \in \mathcal{F}$},\label{eq:minimal mu}
\end{equation}
(in particular, $\varphi \circ u\in \BV_{\rm loc}(\Omega)$ for every $\varphi\in\mathcal{F}$).
The minimal measure $\mu$ satisfying \eqref{eq:minimal mu} is denoted by $|\DIFF u|_\mathcal{F}$ and called the total variation of $u$. 

If $u \in L^1(\Omega,\YY_{\bar y})$ and any $\varphi \in \mathcal{F}$ is so that $\varphi(\bar y)=0$, then we write $u \in \BV_\mathcal{F}(\Omega,\YY_{\bar y})$ provided $|\DIFF u|_\mathcal{F}(\Omega)<\infty$.

When $\mathcal{F} = \{ \varphi \colon {\rm Lip}(\varphi) \le 1\}$ (resp. $\mathcal{F} = \{ \varphi - \varphi(\bar y)  \colon {\rm Lip}(\varphi) \le 1\}$), we simply write $\BV_{\rm loc}(\Omega,\YY)$ (resp. $\BV(\Omega,\YY_{\bar y})$) and $|\DIFF u|$.

\end{defn}

Elementary yet effective examples of families $\mathcal{F}$ can be built from the distance function as follows: for a countable subset $(y_n)_n\subseteq\YY$
$$
\mathcal F\defeq \left\{\rho(\,\cdot\,,y_n)\right\}_{n\in\NN}
\quad \text{or}\quad
\mathcal F\defeq \left\{\rho(\,\cdot\,,y_n)\wedge m\right\}_{n,m\in\NN}.
$$
In the following lemma, we show that a countable family of special Lipschitz functions in the target metric space always exists capable of detecting the membership and the total variation of a map. Assuming an isotropic PI source, we also single out a property for this  to happen.
\begin{prop}\label{lem:family F isotropic}
Let $(\XX,\dist,\mass)$ be a metric measure space, let $\varnothing \neq \Omega\subseteq \XX$ be open and let $(\YY,\rho)$ be separable metric space. Then, there is a countable family $\mathcal{F}\subseteq \LIP(\YY)$ of $1$-Lipschitz function so that: $u \in \BV_{\mathcal{F},{\rm loc}}(\Omega,\YY)$ if and only if $u \in \BV_{\rm loc}(\Omega,\YY)$ and we have
$$|\DIFF u|_{\mathcal{F}}=|\DIFF u|.$$

Moreover, if $\XX$ is an isotropic PI space and $\mathcal{F}\subseteq \LIP(\YY)$ is \emph{any} family of $1$-Lipschitz  functions satisfying
\begin{equation}\label{eq:property F}
\rho(y,z) = \sup_{\varphi \in \mathcal{F}}  |\varphi(y)-\varphi(z)|\qquad \text{for every } y,z \in \YY,
\end{equation}
then the same conclusion holds.
\end{prop}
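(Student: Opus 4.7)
My plan is to prove the first part by constructing the countable family $\mathcal F=\{\rho(\cdot,y_n)\}_n$, where $(y_n)_n$ is dense in $\YY$, and then to reduce the second part to the first by extracting a countable subfamily $\mathcal F_0\subseteq\mathcal F$ that still satisfies \eqref{eq:property F}. The two essential tools are the McShane representation of any $1$-Lipschitz function and the \emph{lattice bound} $|\DIFF\min(f,g)|\le |\DIFF f|\vee|\DIFF g|$ for real-valued BV functions (with $\vee$ denoting the join of Borel measures), a standard locality property obtainable via the coarea formula.

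For the first part, the inclusion $\mathcal F\subseteq \LIP(\YY)$ gives $|\DIFF u|_\mathcal F\le|\DIFF u|$ immediately. For the reverse, given any $1$-Lipschitz $\varphi\colon\YY\to\RR$, McShane yields $\varphi(y)=\inf_n(\varphi(y_n)+\rho(y,y_n))$ for every $y\in\YY$, so $\varphi\circ u=\inf_n g_n$ with $g_n\coloneqq\varphi(y_n)+\rho(u,y_n)$. Since $|\DIFF g_n|=|\DIFF\rho(u,y_n)|\le |\DIFF u|_\mathcal F$, iterating the lattice bound produces $|\DIFF\min_{k\le n}g_k|\le|\DIFF u|_\mathcal F$ for every $n$; passing to the limit $n\to\infty$ via monotone $L^1_{\rm loc}$-convergence and lower semicontinuity of the total variation, one obtains $|\DIFF(\varphi\circ u)|\le|\DIFF u|_\mathcal F$, whence $|\DIFF u|\le|\DIFF u|_\mathcal F$ by arbitrariness of $\varphi$.

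For the second part, with $(y_n)_n\subseteq\YY$ countable dense, I would select for each $(n,m,k)\in \NN^3$ some $\varphi_{n,m,k}\in\mathcal F$ with $|\varphi_{n,m,k}(y_n)-\varphi_{n,m,k}(y_m)|\ge \rho(y_n,y_m)-1/k$. A routine density argument based on the uniform $1$-Lipschitz bound on $\mathcal F$ shows that $\mathcal F_0\coloneqq\{\varphi_{n,m,k}\}$ still realizes the supremum in \eqref{eq:property F} for every pair of points of $\YY$. In particular, for each $n$, $\rho(u(\cdot),y_n)=\sup_{\varphi\in\mathcal F_0}|\varphi\circ u-\varphi(y_n)|$ pointwise, and a countable-supremum variant of the lattice bound combined with lower semicontinuity gives $|\DIFF\rho(u,y_n)|\le|\DIFF u|_\mathcal F$. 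Applying the first part to the countable family $\{\rho(\cdot,y_n)\}_n$ concludes $|\DIFF u|\le|\DIFF u|_\mathcal F$, and the converse inequality is again trivial.

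The main obstacle is the justification of the lattice bound for BV and its countable extension, which rest on the coarea formula and on locality of $|\DIFF\cdot|$ in the metric setting; these are by now standard but require some care. I note that the argument uses only the separability of $\YY$ for the countable extraction, and does not explicitly invoke the isotropic PI assumption: the role of the latter in a more direct proof (treating the uncountable family $\mathcal F$ without reduction) would be to compare the fine $\HH^h$-representations of $|\DIFF(\varphi\circ u)|$ across $\varphi\in\mathcal F$ via the canonical density $\theta$ of Lemma \ref{lem:glue_theta}.
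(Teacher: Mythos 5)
Your proof of the first part rests on the measure-level lattice bound
\[
|\DIFF(f\vee g)|\le |\DIFF f|\vee|\DIFF g|
\]
(equivalently, the $\min$-version you use), which you assert is ``a standard locality property obtainable via the coarea formula'' on an arbitrary metric measure space. This is a genuine gap. Coarea only yields the weaker subadditive estimate $|\DIFF(f\vee g)|\le|\DIFF f|+|\DIFF g|$ via $\{f\vee g>t\}=\{f>t\}\cup\{g>t\}$ and $\per(E\cup F,\cdot)\le\per(E,\cdot)+\per(F,\cdot)$; it does not give the supremum bound, which would let you pass from $|\DIFF g_n|\le|\DIFF u|_{\mathcal F}$ for each $n$ to $|\DIFF(\min_{k\le n}g_k)|\le|\DIFF u|_{\mathcal F}$ uniformly in $n$. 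The lattice bound requires a locality argument (on $\{f>g\}^{(1)}$ the variation of $f\vee g$ equals that of $f$, etc.)\ which in turn uses the fine structure theory of sets of finite perimeter. Indeed, the paper proves exactly this bound, but only in Step~2 of the proof, \emph{after} restricting to isotropic PI spaces: it uses the representation $|\DIFF f|\mres J_f=(f^\vee-f^\wedge)\theta_f\HH^h\mres J_f$ from \eqref{eq:Df on Sf} and the isotropicity consequence Corollary~\ref{cor:isotrop_BV}. The paper sidesteps the lattice bound in the arbitrary-$\XX$ case by enlarging the countable family to $\mathcal G''_\QQ$, i.e.\ including all finite maxima of affine modifications of $\rho(\cdot,y_n)$ directly into $\mathcal F$: then the increasing approximants $\varphi_m=\bigvee_{k\le m}(\varphi(y_k)-\rho(\cdot,y_k))$ already \emph{belong} to $\mathcal F$, so the bound $|\DIFF(\varphi_m\circ u)|\le|\DIFF u|_{\mathcal F}$ is definitional and no comparison of mins/maxes with the individual functions is needed. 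Your choice $\mathcal F=\{\rho(\cdot,y_n)\}_n$ is strictly smaller and forces exactly the comparison you cannot justify without PI.

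For the second part, the isotropic PI hypothesis is available, so the lattice bound (and its countable-supremum extension, after a truncation argument for the monotone limit) is legitimate there, and your countable-extraction idea with $\mathcal F_0=\{\varphi_{n,m,k}\}$ is essentially the paper's Conclusion step. However, as written your second part invokes ``the first part,'' so it inherits the gap; under isotropic PI the overall argument can be repaired by re-proving the first-part identity $|\DIFF u|=|\DIFF u|_{\{\rho(\cdot,y_n)\}_n}$ using the now-justified lattice bound, but that repair requires acknowledging the role of the PI structure that your proposal explicitly disclaims.
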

\begin{proof}
We subdivide the proof into different steps and, before starting, let us fix some notation. In the sequel, we consider fixed $(y_n)_n\subseteq \YY$ countable and dense. Moreover, given $\mathcal{H}\subseteq\LIP(\YY)$ \emph{any} family of $1$-Lipschitz functions, we always indicate in this proof
$$
\mathcal H'\defeq\{a\phi+b:\phi\in\mathcal{H},a\in \{0,\pm1\},b\in\RR\},\quad \mathcal{H}''\defeq\{\phi_1\vee\dots\vee\phi_n:n\in\NN, \phi_1,\dots\phi_n\in\mathcal{H}'\}.$$
We then fix the families
\[
\mathcal{G}\coloneqq \{\rho(\cdot,y)\colon y \in\YY \},\qquad \mathcal{G}_\QQ \coloneqq \{\rho(\cdot,y_n)\colon n \in\NN \},
\]
and, with a little abuse of notation, we shall indicate accordingly
\[
  \mathcal{G}_\QQ' \coloneqq \{a\varphi+b \colon \varphi\in\mathcal{G}_\mathbb{Q}, a \in \{0,\pm1\},b\in\QQ \},\quad 
   \mathcal{G}_\QQ'' \coloneqq \{\phi_1\vee\dots\vee\phi_n:n\in\NN, \phi_1,\dots\phi_n\in\mathcal{G}_\QQ'\}.
\]
\medskip\\\textbf{Step 1: proof for arbitrary $\XX$}.
Let us consider first the case of an arbitrary metric measure space $\XX$. We need to find a countable family $\mathcal{F}$ satisfying
\begin{equation}
   |\DIFF u|=|\DIFF u|_{\mathcal F}, \label{eq:claim F arbitrary}
\end{equation} 
(here and after, we are tacitly proving also the BV membership when proving the equalities). We claim that
\[
\mathcal{F}\coloneqq \mathcal{G}''_\QQ
\]
does the job. Clearly, it is a countable {family of} $1$-Lipschitz functions by construction. We are left with the verification of \eqref{eq:claim F arbitrary}. First notice that a standard approximation procedure in combination with the lower-semicontinuity of the total variation gives $|\DIFF u|_{\mathcal{G}''} = |\DIFF u|_{\mathcal{G}''_\QQ}$. Thus, our claim \eqref{eq:claim F arbitrary} will be proved if we show
\begin{equation}
   |\DIFF u|=|\DIFF u|_{\mathcal{G}''}. \label{eq:claim 1 F}
\end{equation} 
As $|\DIFF u|_{\mathcal{G}''} \le |\DIFF u|$ always holds, we prove the reverse implication, i.e.\ for any $\varphi:\YY\rightarrow\RR$ $1$-Lipschitz function it holds
$$
|\DIFF(\varphi\circ u)|\le |\DIFF u|_{\mathcal{G}''}.
$$
By lower semicontinuity of the total variation measure, it is not restrictive to suppose $\varphi$ to be bounded. 
 Define then $\varphi_0\defeq -\Vert\varphi\Vert_\infty$ and 
$$\varphi_n'\defeq (\varphi(y_n)-\rho(\,\cdot\,,y_n))\qquad\text{then}\qquad\varphi_n\defeq\bigvee_{0\le k\le n}\varphi_n'.$$ Notice that $(\varphi_n')_n\subseteq\mathcal{G}'$ so that $(\varphi_n)_n\subseteq\mathcal{G}''$. It is easy to show that $\varphi_n\nearrow\varphi$ pointwise, then, by $\Vert\varphi_n\Vert_\infty\le \Vert\varphi\Vert_\infty$ we have $\varphi_n \circ u\rightarrow\varphi\circ u$ in $L^1_{\rm loc}(\XX)$ and the claim \eqref{eq:claim 1 F} is proved by the lower semicontinuity of the total variation. By what was said before, \eqref{eq:claim F arbitrary} follows concluding the proof for arbitrary $\XX$.
\medskip\\\textbf{Step 2: extension of families for $\XX$ isotropic PI}.
From here, we assume $ \XX$ to be isotropic PI. In this step, we prove that for any family $\mathcal{H}$ of $1$-Lipschitz functions, we have 
$$
|\DIFF u|_{\mathcal{H''}}=|\DIFF u|_{\mathcal{H}}.
$$
As trivially $|\DIFF u|_{\mathcal{H''}}\ge|\DIFF u|_{\mathcal{H'}}=|\DIFF u|_{\mathcal{H}}$, it is enough to show that for every $f,g\in\BV_{\rm loc}(\Omega)$, it holds that
$$
|\DIFF ( f \vee g)|\le |\DIFF f|\vee|\DIFF g|,
$$
(notice that $f\vee g\in\ \BV_{\rm loc}(\Omega)$ with  $|\DIFF (f\vee g)|\le |\DIFF f|+|\DIFF g|$). To this aim, notice first that we can assume with no loss of generality that $f,g\in L^\infty(\Omega)$. Notice also that $$f^\wedge \vee g^\wedge\le (f\vee g)^\wedge\le (f\vee g)^\vee = f^\vee\vee g^\vee,$$ then
$(f\vee g)^\vee-(f\vee g)^\wedge\le (f^\vee-f^\wedge)\vee (g^\vee-g^\wedge)$. In particular, \(J_{f\vee g}\subseteq J_f\cup J_g\). Moreover, if $x\notin J_f\cup J_g$, then $\overline{f\vee g}(x)=\bar f(x)\vee \bar g(x)$.
Now, using \eqref{eq:Df on Sf} and Corollary \ref{cor:isotrop_BV} we obtain that
\begin{align*}
	|\DIFF(f\vee g)|\mres J_{f\vee g}&=\big((f\vee g)^\vee-(f\vee g)^\wedge\big)\theta_{f\vee g}\HH^h\mres J_{f\vee g}\\
 &\leq\big((f^\vee-f^\wedge)\vee(g^\vee-g^\wedge)\big)\theta_{f\vee g}\HH^h\mres\big((J_{f\vee g}\cap J_f)\cup(J_{f\vee g}\cap J_g)\big)\\
  &=\big((f^\vee-f^\wedge)\theta_{f\vee g}\HH^h\mres(J_{f\vee g}\cap J_f)\big)\vee\big((g^\vee-g^\wedge)\theta_{f\vee g}\HH^h\mres(J_{f\vee g}\cap J_g)\big)\\
 &=\big((f^\vee-f^\wedge)\theta_f\HH^h\mres(J_{f\vee g}\cap J_f)\big)\vee\big((g^\vee-g^\wedge)\theta_g\HH^h\mres(J_{f\vee g}\cap J_g)\big)\\
 &\leq\big((f^\vee-f^\wedge)\theta_f\HH^h\mres J_f\big)\vee\big((g^\vee-g^\wedge)\theta_g\HH^h\mres J_g\big)\\
 &=(|\DIFF f|\mres J_f)\vee(|\DIFF g|\mres J_g).
\end{align*}
Also, using repeatedly \cite[Lemma 3.4]{KinnunenKorteShanmugaligam14} together with Chebyshev inequality 
we have that for $\HH^h$-a.e.\ $x\notin J_f\cup J_g$ such that $\bar f\ge \bar g$, then $x\notin J_{(f\vee g)-f}$ and $\overline{(f\vee g)-f}(x)=0$. Then, by the coarea formula, if we set $B_f\defeq \{\bar f\ge \bar g\}\setminus(J_f\cup J_g)$,
\begin{align*}
	|\DIFF ((f\vee g)-f)|(B_f)=\int_\RR \per(\{(f\vee g)-f)>t\},B_f)\dd t=0,
\end{align*}
as for $\HH^h$-a.e.\ $x\in B_f\cap\partial^*\{(f\vee g)-f)>t\}$, then $t=\overline{(f\vee g)-u}(x)=0$. Therefore, we have obtained 
$|\DIFF (f\vee g)|\mres B_f=|\DIFF f|\mres B_f$. By symmetry, $|\DIFF (f\vee g)|\mres B_g=|\DIFF g|\mres B_g$, where $B_g\defeq \{\bar g\ge \bar f\}\setminus(J_f\cup J_g)$. All in all, we have proved the claim, taking into account that $|\DIFF (f\vee g)|\mres (\XX\setminus J_{f\vee g})(J_f\cup J_g)=0$.
\medskip\\\textbf{Conclusion}. Let $\XX$ be isotropic and let $\mathcal{F}$ be any family of $1$-Lipschitz functions satisfying \eqref{eq:property F}. We need to show that
\[
|\DIFF u| = |\DIFF u|_{\mathcal{F}}.
\]
From \eqref{eq:claim 1 F} in \textbf{Step 1} and thanks to \textbf{Step 2}, we directly achieve $|\DIFF u| = |\DIFF u|_{\mathcal{G}}$ and $|\DIFF u|_{\mathcal{F}} = |\DIFF u|_{\mathcal{F}''}$. So, the proof will be completed if we show
\[
|\DIFF u|_{\mathcal{G}} = |\DIFF u|_{\mathcal{F}''}.
\]
As $|\DIFF u|_{\mathcal{F''}} \le |\DIFF u|= |\DIFF u|_{\mathcal{G}}$ always holds, it is then enough to show that for any $\bar  y\in\YY$, then
$$|\DIFF\rho(u(\,\cdot\,), \bar y)|\le |\DIFF u|_{\mathcal{F}''}.$$
For every $(n,k)\in\NN^2$, take $\varphi_{n,k}\in\mathcal{F}''$ such that 
$$
\varphi_{n,k}(y_n)-\varphi_{n,k}( \bar y)\ge \rho(y_n,\bar y)-k^{-1},
$$
this possibility is ensured by the assumption \eqref{eq:property F} on $\mathcal{F}$.
Now fix $l:\NN\rightarrow\NN^2$, an enumeration of $\NN^2$, and define 
$$\varphi_m\defeq\bigvee_{a\le m}(\varphi_{l(a)}-\varphi_{l(a)}( \bar y))\vee 0.$$
Notice that $(\varphi_m)_m\subseteq\mathcal{F}''$ and $\varphi_m \nearrow\rho(\,\cdot\,,\bar y)$. Then we conclude as at the end of \textbf{Step 1}.
\end{proof}
We show now a Leibniz-type result proving that any $\BV_{\rm loc}$-map { restricted to any compact set can be extended as a global BV-map}.
\begin{prop}[Cut-off]\label{prop:cut off}
Let $(\XX,\dist,\mass)$ be a metric measure space, let $\varnothing \neq \Omega\subseteq \XX$ be open and let $(\YY,\rho,\bar y)$ be a pointed separable metric space. Fix $u \in \BV_{\rm loc}(\Omega,\YY)$. Then, for every compact $\varnothing\neq K\subseteq \Omega$, there is $0< r_0\coloneqq r_0(u,K) \le \dist(K,\XX\setminus \Omega)$ (set to $+\infty$ if $\Omega=\XX$) so that, setting
\[
w_r \coloneqq \begin{cases} u\qquad&\text{on }\Omega_r \coloneqq \{ \dist(\,\cdot\,,K)<r \}, \\  \bar y\qquad&\text{otherwise},\end{cases}
\]
it holds
\begin{equation}\label{csdcads}
|\DIFF w_r| \le \rho(u,\bar y) |\DIFF \nchi_{\Omega_r}| + |\DIFF u|\mres{\Omega_r} \qquad \text{for a.e.\ }r\in (0,r_0),
\end{equation}
as measures on $\XX$,  where we implicitly state that for $r$ satisfying \eqref{csdcads}, $w_r\in \BV(\XX,\YY_{\bar y})$ and we notice that \eqref{csdcads} makes sense thanks to coarea.

In particular, for any $x\in \Omega$, there exists $w_x\in \BV(\XX,\YY_{\bar y})$ with $u=w_x$ $\mass$-a.e.\  on a neighbourhood of $x$.
\end{prop}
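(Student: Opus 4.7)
The strategy is to reduce the claim, via the post-composition characterization of Definition \ref{def:Du Resh} and the countable detecting family of Proposition \ref{lem:family F isotropic}, to a real-valued Leibniz-type estimate for the product of a BV function and the characteristic function of an open set of finite perimeter; the $1$-Lipschitz function $h \coloneqq \dist(\,\cdot\,,K)$ simultaneously plays the role of defining function of the sets $\Omega_r$ and, via the coarea formula, makes the right-hand side of \eqref{csdcads} meaningful for a.e.\ $r$.

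First, I would localize near $K$: by compactness of $K$ and $u \in \BV_{\rm loc}(\Omega,\YY)$, extract a finite open subcover of $K$ on whose union $u$ has finite total variation and $\rho(u,\bar y)$ is integrable; applying Lemma \ref{footnote9} to $\varphi \circ u$ for each $\varphi$ in the countable family $\mathcal{F}$ from Proposition \ref{lem:family F isotropic} then yields an open set $V$ with $K \subseteq V \Subset \Omega$ such that $u \in \BV(V,\YY_{\bar y})$. Choose $r_0>0$ so small that $\overline{\{h < r_0\}} \subseteq V$. Since $h$ is $1$-Lipschitz ($|\DIFF h| \le \mass$), the BV coarea formula applied to $h$ gives
\[
\int_0^{r_0}\per(\Omega_r,\XX)\,\dd r \le \mass(V),\qquad \int_0^{r_0}\int \rho(u,\bar y)\,\dd \per(\Omega_r)\,\dd r \le \int_V\rho(u,\bar y)\,\dd \mass,
\]
and $\int_0^{r_0}|\DIFF(\varphi \circ u)|(\{h=r\})\,\dd r=0$ for each $\varphi \in \mathcal{F}$. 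Hence, for a.e.\ $r \in (0,r_0)$ we have simultaneously: $\Omega_r$ is of finite perimeter, $\rho(u,\bar y) \in L^1(\per(\Omega_r))$, and $|\DIFF(\varphi\circ u)|(\{h=r\})=0$ for every $\varphi \in \mathcal F$. In particular, the right-hand side of \eqref{csdcads} is a well-defined finite measure for such $r$.

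Fix a good $r$. By Definition \ref{def:Du Resh} and the detection property of $\mathcal{F}$, it suffices to show, for each $\varphi \in \mathcal{F}$ (so $\varphi(\bar y)=0$, hence $\varphi \circ w_r = (\varphi \circ u)\nchi_{\Omega_r}$), the real-valued estimate
\[
|\DIFF((\varphi \circ u)\nchi_{\Omega_r})| \le \rho(u,\bar y)\,\per(\Omega_r,\cdot) + |\DIFF(\varphi \circ u)|\mres \Omega_r.
\]
I would prove this by Lipschitz-approximating $\nchi_{\Omega_r}$ via the cutoffs $\eta_\epsilon \coloneqq \phi_\epsilon \circ h$ (with $\phi_\epsilon\colon \RR \to [0,1]$ equal to $1$ on $(-\infty,r]$, to $0$ on $[r+\epsilon,\infty)$ and affine in between), applying the BV--Lipschitz product rule
\[
|\DIFF((\varphi \circ u)\eta_\epsilon)| \le \eta_\epsilon\,\dd|\DIFF(\varphi \circ u)| + |\varphi \circ u|\lip(\eta_\epsilon)\,\dd \mass,
\]
and passing to the limit $\epsilon \to 0$ by lower semicontinuity. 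The interior term converges to $|\DIFF(\varphi \circ u)|\mres \Omega_r$ by dominated convergence, using $|\DIFF(\varphi \circ u)|(\{h=r\})=0$; the boundary term is handled by combining the coarea formula for $h$ with a Lebesgue-differentiation argument on the $L^1(0,r_0)$-function $s \mapsto \int \rho(u,\bar y)\,\dd \per(\Omega_s)$, together with $|\varphi \circ u| \le \rho(u,\bar y)$. This limit analysis---converting $\mass$-integrals over thin annular regions near $\{h=r\}$ into integrals against $\per(\Omega_r)$---is the main technical obstacle. Finally, the ``in particular'' statement follows by applying the above with $K=\{x\}$ and selecting a good $r>0$: the resulting $w_x \coloneqq w_r$ agrees with $u$ on $\Omega_r$, a neighborhood of $x$.
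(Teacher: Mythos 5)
Your reduction steps (passing to a countable detecting family $\mathcal{F}$, localizing via Lemma \ref{footnote9} to get $u\in\BV(V,\YY_{\bar y})$ on a neighborhood $V$ of $K$, using coarea to discard a null set of radii, and then proving the real-valued Leibniz estimate for each $\varphi\in\mathcal F$) match the paper's scheme. The divergence, and the gap, is in how you approximate $\nchi_{\Omega_r}$.

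You take the explicit cutoffs $\eta_\epsilon = \phi_\epsilon\circ h$ with $h=\dist(\cdot,K)$, and for the boundary term you want
\[
\limsup_{\epsilon\to 0}\int|\varphi\circ u|\,\lip(\eta_\epsilon)\,\dd\mass \;\le\; \int\rho(u,\bar y)\,\dd\per(\Omega_r,\cdot)\qquad\text{for a.e.\ }r,
\]
via coarea and Lebesgue differentiation of $s\mapsto\int\rho(u,\bar y)\,\dd\per(\Omega_s)$. The problem is that the inequality produced by coarea goes the opposite way: on a general metric measure space one only has $|\DIFF h|\le\lip(h)\,\mass\le\mass$, so
\[
\int_r^{r+\epsilon}\int\rho(u,\bar y)\,\dd\per(\Omega_s)\,\dd s \;=\;\int_{\{r<h<r+\epsilon\}}\rho(u,\bar y)\,\dd|\DIFF h| \;\le\; \int_{\{r<h<r+\epsilon\}}\rho(u,\bar y)\lip(h)\,\dd\mass,
\]
which, after dividing by $\epsilon$ and passing to the limit, only yields $\int\rho(u,\bar y)\,\dd\per(\Omega_r,\cdot)\le\liminf_\epsilon\int|\varphi\circ u|\lip(\eta_\epsilon)\,\dd\mass$ — a lower bound on your boundary term, not the upper bound you need. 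In other words, the cutoffs $\phi_\epsilon\circ h$ are in general not energy-optimal for the perimeter of $\Omega_r$: $\lip(\eta_\epsilon)\,\mass$ need not converge to $\per(\Omega_r,\cdot)$, it may overshoot. The Proposition is stated for an arbitrary metric measure space, where one cannot invoke $|\DIFF h|=\lip(h)\,\mass$; this identity does hold on PI spaces (by Keith/Cheeger), so your argument could be salvaged under the PI hypothesis, but not in the stated generality. The paper instead takes the abstract Lipschitz approximants $\eta_k$ of $\nchi_{\Omega_r}$ furnished by the very definition of $|\DIFF\nchi_{\Omega_r}|$, so that $\lip(\eta_k)\,\mass\rightharpoonup\per(\Omega_r,\cdot)$ by construction; it then also approximates $\varphi_i\circ u$ by Lipschitz functions $f_n^i$ and selects $r$ so that $f_n^i\to\varphi_i\circ u$ in $L^1(\per(\Omega_r,\cdot))$ (a coarea/diagonal argument), which is the device that makes the double limit $k\to\infty$, $n\to\infty$ close without any need for the identity $|\DIFF h|=\lip(h)\,\mass$. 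A secondary point: you invoke a BV--Lipschitz product rule $|\DIFF(fg)|\le g\,|\DIFF f|+|f|\lip(g)\,\mass$ as a black box; in this relaxation framework it is really a small lemma proved by Lipschitz approximation of $f$ (which is essentially what the paper does inline), so it is worth noting but is not the essential obstruction.
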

\begin{proof}
    {Let us consider a fixed Borel representative of $u$}. Let us also take $\mathcal{F}\subseteq \LIP(\YY)$ the countable family of $1$-Lipschitz functions given by Lemma \ref{lem:family F isotropic} and consider an enumaration $(\varphi_i)_{i \in \NN}$ of the family of functions $\{ \varphi - \varphi(\bar y) \colon \varphi \in \mathcal{F}\}$.

    By definition of maps of locally bounded variation and since $K$ is compact, there is an open neighborhood $ U\subseteq \XX$ so that $K \Subset U$ and, taking into account Lemma \ref{footnote9}, $u \in \BV(U,\YY_{\bar y})$. Set $\Omega_r \coloneqq \{\dist(\cdot, K)<r\}$ and, for every $i \in\NN$, consider $ \varphi_i \circ u \in \BV(U)$ and optimal approximations $(f^i_n)\subseteq \LIP_{\rm loc}(U)$ so that
    \[
        f^i_n \to \varphi_i \circ u \text{ in }L^1(U),\qquad \lip(f^i_n)\mass \to |\DIFF (\varphi_i\circ u)|,
    \]
    in duality with $C_b(U)$, as $n$ goes to infinity. Without loss of generality, we can suppose $f_n^i$ to be bounded functions (up to a truncation and diagonalization argument). We then have that a.e.\ $r \in (0,\dist(\XX\setminus U,K)$ is so that
    \[
    |\DIFF u|(\partial \Omega_r)=0,\qquad {\rm Per}(\Omega_r)<\infty, \qquad  f^i_n \to \varphi_i\circ u \text{ in }L^1(|\DIFF \nchi_{\Omega_r}|), 
    \]
    as $n$ goes to infinity, for all $i\in\NN$. The first two claims are standard, the latter is due to coarea formula.

    Let $r_0\coloneqq r_0(u,K) \coloneqq \dist(K,\XX\setminus U) \le \dist(K,\XX\setminus \Omega)$ and, for any other $r \in (0,r_0)$ satisfying the above (a.e.\ $r$ is suitable),  consider $\eta_k \subseteq \LIP_{\rm loc}(\XX)$ so that
    \[
        \eta_k \to \nchi_{\Omega_r} \text{ in }L^1(\XX),\qquad \lip(\eta_k)\mass \to |\DIFF \nchi_{\Omega_r}|,
    \]
    in duality with $C_b(U)$ as $k$ goes to infinity. Without loss of generality, we can suppose $\eta_k$ to be nonnegative and bounded functions (up to replacing $\eta_k$ with $1\vee (\eta_k \wedge 0)$). By the Leibniz rule for the local lipschitz constant, we therefore get for every $i,k,n\in\NN$
    \[
    \lip(\eta_kf_n^i) \le \eta_k \lip(f_n^i)\mass + |f_n^i| \lip(\eta_k)\mass.
    \]
    By lower semicontinuity of the total variation, sending $k$ to infinty gives
    \[
    |\DIFF(\nchi_{\Omega_r} f_n^i)| \le \nchi_{\Omega_r}\lip(f_n^i)\mass + |f_n^i||\DIFF \nchi_{\Omega_r}|,
    \]
    as measures on $\XX$. Now, by our choice of $r$, we have for all $i \in\NN$
    \[
    \nchi_{\Omega_r}\lip(f_n^i)\mass \to |\DIFF (\varphi_i \circ u) |\mres{\Omega_r},\qquad |f_n^i| |\DIFF \nchi_{\Omega_r}| \to |\varphi_i|\circ u \, |\DIFF \nchi_{\Omega_r}| ,
    \]
    in duality with $C_b(\XX)$, as $n$ goes to infinity, using in the first that $|\DIFF (\varphi_i\circ u)|(\partial B_r)=0$. Thus, again by lower semicontinuity, we reach
    \[
    |\DIFF( \nchi_{\Omega_r} (\varphi_i\circ u))| \le |\DIFF(\varphi_i \circ u)|\mres{\Omega_r} +  |\varphi_i|\circ u \, |\DIFF \nchi_{\Omega_r}| 
    \]
    as measures on $\XX$. By our choice of $\varphi_i$, and since $\nchi_{\Omega_r}(\varphi_i \circ u) = \varphi_i \circ w_r$ $\mass$-a.e., we deduce
    \[
    |\DIFF( \varphi_i \circ w_r)| \le |\DIFF u|\mres{\Omega_r} + \rho(u,\bar y)|\DIFF \nchi_{\Omega_r}| ,
    \]
    having also used that $\varphi_i$ are $1$-Lipschitz. By arbitrariness of $i$, the above gives automatically that $w_r \in \BV(\XX,\YY_{\bar y})$ and the conclusion.    
\end{proof}

\subsection{Relaxation with simple maps}
Here we revisit the notion of BV-maps of Ambrosio in \cite{ambmetric} by means of relaxation with locally simple maps. We start by defining the notion of simple maps.
\begin{defn}[Simple maps]
    Let $(\XX,\dist,\mass)$ be a metric measure space, let $\varnothing \neq \Omega \subseteq \XX$ be open and let $(\YY,\rho)$ be a separable metric space. A Borel map $u \colon \Omega \to  \YY$ is \emph{simple} provided it has a finite range, i.e.\ there exists a finite integer $n\in \NN$, a Borel disjoint partition $(E_i)_{i=1}^n$ of $\Omega$, and points $(y_i)_{i=1}^n \subseteq \YY$ such that $u(x) = y_i$ for $x \in E_i$ and for every $i=1,...,n$.

    Moreover, $u$ is called \emph{locally simple}, provided for every point $x$ there is a neighbourhood $U\ni x $ so that $u$ is simple on $U$.
\end{defn}
We denote by $\SS(\Omega,\YY),\SS_{\rm loc}(\Omega,\YY)$ the collection of simple and locally simple maps, respectively. Given $u \in \SS(\Omega,\YY)$, we will often write its expression with the formal sum
\[
u = \sum_{i=1}^n y_i\nchi_{E_i}.
\]
When $\YY=\RR$, as usual, we drop the dependence on $\YY$ in the notation and refer to any $u \in \SS(\Omega)$ as a simple function. Notice that when $\XX$ is proper we have
    \[
    u \in \SS_{\rm loc}(\Omega,\YY) \qquad \Leftrightarrow \qquad u \in \SS(U,\YY) \qquad \forall U\Subset \Omega.
    \]
\begin{rem}\label{rem:simple map bv}
    It always holds that $\SS_{\rm loc}(\Omega,\YY)\subseteq L^1_{\rm loc}(\Omega,\YY)$. In particular, it makes sense to speak of the total variation $|\DIFF u|$ of a (locally) simple map. We have the following simple characterization: given $u \in \SS(\Omega,\YY)$, say $u=\sum_{i=1}^n y_i \nchi_{E_i}$, it holds
    \[
    |\DIFF u|(\Omega)<\infty \quad \text{if and only if} \quad {\rm Per}(E_i)<\infty,\quad \text{for all }i=1,...,n,
    \]
    in the case in which the $y_i$'s are all distinct.
    Assume that $|\DIFF u|(\Omega)<\infty$, fix any $i \in \{1,..,n\}$ and take a $1$-Lipschitz non-negative function $\varphi \colon \YY\to \RR$ so that $\varphi(y_i)\neq 0$ and $\varphi(y_j)=0$ for all $i\neq j$. By coarea, we have
    \[
    \varphi(y_i){\rm Per}(E_i) = \int {\rm Per }(\{ \varphi\circ u >t\})\,\dd t = |\DIFF (\varphi\circ u)|(\Omega) \le |\DIFF u|(\Omega)<\infty. \]
    This proves the first implication. For the converse, simply notice for any $1$-Lipschitz function $\varphi \colon \YY\to \RR$ that $|\DIFF (\varphi \circ u)| \le \sum_{i=1}^n |\varphi(y_i)|{\rm Per}(E_i,\cdot)$.\fr
\end{rem}  

We are ready to give the notion of BV-map via relaxation.
\begin{defn}\label{def:Vu}
Let $(\XX,\dist,\mass)$ be a metric measure space, let $\varnothing \neq \Omega\subseteq \XX$ be open and let $(\YY,\rho,\bar y)$ be a pointed separable metric space. For $u \in L^1_{\rm loc}(\Omega,\YY)$ and $\varnothing \neq U \subseteq \Omega$ open, we define
\[
V_{u}(U):= \inf \big\{ \liminf_{n\to\infty} |\DIFF u_n|(U)\colon (u_n) \subseteq  \mathcal S_{\rm loc}(U,\YY),\ \rho(u_n, u) \to 0\text{ in } L^1_{\rm loc}(U)  \big\}.
\]
We then say that $u \in \BV^*(\Omega,\YY_{\bar y})$, provided $u \in L^1(\Omega,\YY_{\bar y})$ and $V_u(\Omega)<\infty$. 

\end{defn}

\subsubsection{Variation of simple maps}\label{ss:var_simple}

Here we characterize the total variation of a simple map. In the general case faced in Lemma \ref{cdscascs}, {we will obtain effective bounds with structural constants depending only on the PI-parameters}. We will also include in Lemma \ref{lem:var simple isotropic} two closed-form formulas for the total variation of a simple map when the source space is assumed also isotropic or with the two-sideness property. These more intuitive formulas actually motivate our search for the general bounds obtained in Lemma \ref{cdscascs}.

\medskip

We start setting up some notation. For a partition $(E_i)_{i=1,..,n}$ of $\varnothing \neq \Omega\subseteq \XX$ open (in practice, it will always consists of sets of finite perimeter), we define
\[
\Gamma_S\coloneqq  { \Omega\cap}   \bigcap_{i\in S}\partial^*E_i\setminus  \bigcup_{i\notin S}\partial^*E_i \,\qquad \text{for every } \varnothing\neq S\subseteq\{1,\ldots,n\}. 
\]
Observe thus that $\{\Gamma_S\}_{S\subseteq \{1,\dots,n\}, \#S\ge 2}$ is a partition of ${ \Omega\cap}\bigcup_{i=1,\dots,n}\partial^*E_i $. To see this, call $\Gamma \coloneqq {\Omega\cap }\bigcup_{i=1,\dots,n}\partial^* E_i$ and notice the trivial inclusion $ \{\Gamma_S\}_{S\subseteq \{1,\dots,n\}, \#S\ge 2} \subseteq \Gamma$. For the converse, just observe that if $x \in \Gamma$, then there is $\tilde S\subseteq \{1,...,n\}$ with $\#\tilde S\ge 2$ (since $(E_i)$ is a partition of $\Omega$) so that $x \in \Gamma_{\tilde S}$. Finally, it can be readily checked that $\Gamma_{S_1} \cap \Gamma_{S_2} = \varnothing$ for $S_1 \neq S_2$, proving that it is indeed a partition.

\begin{lem}\label{cdscascs}
Let $(\XX,\dist,\mass)$ be a PI space, let $\varnothing\neq \Omega\subseteq\XX$ be open and let  $(\YY,\rho)$ be a separable metric space. Consider $u \in \SS(\Omega,\YY)$  with $|\DIFF u|(\Omega)<\infty$, say $u=\sum_{i=1}^n y_i\chi_{E_i}$, where \((y_i)_{i=1}^n\subseteq\YY\) and \((E_i)_{i=1}^n\) is a partition of \(\Omega\).  Then, for some $c>0$ depending only on the PI-parameters, it holds, as measures on $\Omega$, that
$$
c^{-1}\sum_{S\subseteq\{1,\dots,n\}, \#S\ge 2} \max_{i,j\in S}\rho(y_i,y_j)\HH^h\mres \Gamma_S\le |\DIFF u|\le c\sum_{S\subseteq\{1,\dots,n\}, \#S\ge 2} \max_{i,j\in S}\rho(y_i,y_j)\HH^h\mres \Gamma_S
$$
and
$$
c^{-1}\sum_{i\ne j} \rho(y_i,y_j)\HH^h\mres ({\Omega\cap }\partial^*{E_i}\cap\partial^*{E_j})\le |\DIFF u|\le c \sum_{i\ne j} \rho(y_i,y_j)\HH^h\mres ({ \Omega\cap}\partial^*{E_i}\cap\partial^*{E_j}).
$$
\end{lem}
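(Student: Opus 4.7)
The strategy is to test $u$ against every $1$-Lipschitz function $\varphi \colon \YY \to \RR$ and apply the jump-set representation \eqref{eq:Df on Sf} to the real-valued simple function $f \coloneqq \varphi \circ u = \sum_i \varphi(y_i) \chi_{E_i}$. Since $|\DIFF u|(\Omega) < \infty$ forces each $E_i$ to have finite perimeter (Remark \ref{rem:simple map bv}, after identifying repeated values $y_i$), the sets $\{\Gamma_S\}_{\#S \ge 2}$ form a Borel partition of $\bigcup_i \partial^* E_i$, on which every $|\DIFF(\varphi \circ u)|$ is concentrated. By Remark \ref{rem:S max}, only indices with $\#S \le n_0 - 1$ carry $\HH^h$-mass. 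The central computation is that for $x \in \Gamma_S$ (with $\#S \ge 2$), one has $f^\vee(x) = \max_{i \in S} \varphi(y_i)$ and $f^\wedge(x) = \min_{i \in S} \varphi(y_i)$. I would prove this via a density argument: for $i \notin S$, $x \notin \partial^* E_i$ forces $D(E_i, x) \in \{0,1\}$, and $D(E_i, x) = 1$ is incompatible with $\overline D(E_{i_0}, x) > 0$ for any $i_0 \in S$ (since $E_{i_0} \cap E_i = \varnothing$), so $D(E_i, x) = 0$ for every $i \notin S$. Consequently $\{f > t\}$ has positive upper density at $x$ iff $\max_{i \in S} \varphi(y_i) > t$, yielding the formula for $f^\vee$ (and symmetrically for $f^\wedge$). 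Combining with \eqref{eq:Df on Sf} and $\theta_f \in [\gamma, C]$ gives, on $\Gamma_S$,
\[
\gamma\, \Delta_\varphi^S \, \HH^h \mres \Gamma_S \;\le\; |\DIFF(\varphi \circ u)| \mres \Gamma_S \;\le\; C\, \Delta_\varphi^S \, \HH^h \mres \Gamma_S, \qquad \Delta_\varphi^S \coloneqq \max_{i \in S} \varphi(y_i) - \min_{i \in S} \varphi(y_i).
\]

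From here, the first claimed inequality follows by choosing $\varphi$ appropriately. For the upper bound, the Lipschitz condition gives $\Delta_\varphi^S \le \max_{i,j \in S} \rho(y_i, y_j)$, so the measure $C \sum_S \max_{i,j \in S} \rho(y_i, y_j)\, \HH^h \mres \Gamma_S$ dominates $|\DIFF(\varphi \circ u)|$ for every 1-Lipschitz $\varphi$, hence dominates $|\DIFF u|$ by the minimality in Definition \ref{def:Du Resh}. For the lower bound, for each $S$ with $\#S \ge 2$ I would choose indices $i^*, j^* \in S$ realizing $\max_{i, j \in S} \rho(y_i, y_j)$ and set $\varphi_S \coloneqq \rho(\cdot, y_{i^*})$, which is 1-Lipschitz and satisfies $\Delta_{\varphi_S}^S = \rho(y_{j^*}, y_{i^*}) = \max_{i, j \in S} \rho(y_i, y_j)$. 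Then
\[
|\DIFF u| \mres \Gamma_S \;\ge\; |\DIFF(\varphi_S \circ u)| \mres \Gamma_S \;\ge\; \gamma \max_{i, j \in S} \rho(y_i, y_j)\, \HH^h \mres \Gamma_S,
\]
and summing over the disjoint family $\{\Gamma_S\}_{\#S \ge 2}$ (which exhausts the set where the right-hand side measure lives) gives the lower bound.

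For the second inequality, the identity $\sum_{i \ne j} \rho(y_i, y_j)\, \HH^h \mres (\partial^* E_i \cap \partial^* E_j) = \sum_S \big( \sum_{i, j \in S,\, i \ne j} \rho(y_i, y_j) \big)\, \HH^h \mres \Gamma_S$ combined with the uniform bound $\#S \le n_0$ from Remark \ref{rem:S max} makes $\sum_{i, j \in S,\, i \ne j} \rho(y_i, y_j)$ comparable to $\max_{i, j \in S} \rho(y_i, y_j)$ up to a factor depending only on $n_0$. Thus the second inequality follows from the first, at the price of absorbing this factor into the constant. The main obstacle is the pointwise density computation identifying $f^\vee$ and $f^\wedge$ on $\Gamma_S$; once this is in place, the remainder of the argument is a careful accounting of the uniform constants $\gamma, C$ provided by Theorem \ref{thm:repr_per}.
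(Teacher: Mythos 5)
Your proof is correct and yields the stated comparison, but it takes a genuinely different route from the paper on the central step. Both arguments reduce to bounding $|\DIFF(\varphi\circ u)|\mres\Gamma_S$ for real-valued post-compositions, and both choose $\varphi_S=\rho(\cdot,y_{i^*})$ for the lower bound, but the core computation diverges. The paper sorts the scalar values, forms the nested sets $F_i=E_1\cup\dots\cup E_i$, shows by a density argument that $\Gamma_S\subseteq\partial^*F_i$ exactly when $\min(S)\le i<\max(S)$, and then applies the \emph{coarea formula} to write $|\DIFF(\varphi\circ u)|\mres\Gamma_S=\sum_{i=\min S}^{\max S-1}(z_{i+1}-z_i)\theta_{F_i}\HH^h\mres\Gamma_S$, which telescopes to the desired quantity. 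You instead compute $f^\vee$ and $f^\wedge$ pointwise on $\Gamma_S$ via a density argument and invoke the \emph{jump-set representation} \eqref{eq:Df on Sf}. Your density computation is right: for $x\in\Gamma_S$ one has $D(E_i,x)=0$ for $i\notin S$, hence $f^\vee(x)=\max_{i\in S}\varphi(y_i)$, $f^\wedge(x)=\min_{i\in S}\varphi(y_i)$. What this buys you is a more transparent picture of the jump of $\varphi\circ u$ on $\Gamma_S$; what the coarea approach buys is that it produces the full $|\DIFF(\varphi\circ u)|\mres\Gamma_S$ directly, without any concern about mass off $J_{\varphi\circ u}$.

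That concern is the one small implicit step in your argument: \eqref{eq:Df on Sf} only describes $|\DIFF(\varphi\circ u)|\mres J_{\varphi\circ u}$, so to read off a two-sided bound \emph{on all of} $\Gamma_S$ you must also know $|\DIFF(\varphi\circ u)|(\Gamma_S\setminus J_{\varphi\circ u})=0$. When $\Delta_\varphi^S>0$ this is vacuous since $\Gamma_S\subseteq J_{\varphi\circ u}$; when $\Delta_\varphi^S=0$ it is not, and you need the upper bound $|\DIFF(\varphi\circ u)|\mres\Gamma_S=0$ to deduce the global upper bound by the minimality in Definition \ref{def:Du Resh}. This follows from exactly the same density computation: if all $\varphi(y_i)$, $i\in S$, coincide, then for $x\in\Gamma_S$ and any $t$ one checks $\overline D(\{\varphi\circ u>t\},x)>0$ and $\overline D(\{\varphi\circ u\le t\},x)>0$ cannot hold simultaneously, so $x\notin\partial^*\{\varphi\circ u>t\}$; by coarea $|\DIFF(\varphi\circ u)|$ gives no mass to $\Gamma_S$. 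With that observation made explicit your argument is complete, and the passage from the first inequality to the second via Remark \ref{rem:S max} is exactly as in the paper.
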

\begin{proof}
Let us first make some preliminary observations. Fix an arbitrary set $S\subseteq\{1,\dots,n\}$ with $S\ge 2$. From Remark \ref{rem:simple map bv}, we know that $E_i$ are of finite perimeter and also that $|\DIFF u|$ is concentrated on $\Gamma ={\Omega\cap } \cup_{i=1,...,n}\partial^*E_i $. Thus, recalling that $\{\Gamma_S \colon S\subseteq \{1,...,n\}, \ \#S\ge 2\}$ is a partition of $\Gamma$, we have the identity:
\begin{equation}\label{eq:var simple GammaS}
|\DIFF u|=\sum_{\substack{S\subseteq\{1,\ldots,n\}: \\ \# S\geq 2}}|\DIFF u|\mres{\Gamma_S}.
\end{equation}
We claim now that it is enough to show as measures on $\Omega$ that
\begin{equation}
    c^{-1} \max_{i,j\in S}\rho(y_i,y_j)\HH^h\mres  \Gamma_S\le |\DIFF u|\mres \Gamma_S \le c \max_{i,j\in S}\rho(y_i,y_j)\HH^h\mres  \Gamma_S, \label{eq:claim tv simple}
\end{equation}
for every $S \subseteq \{1,...,n\}$ with $\#S \ge 2$. Indeed, admitting the above, let us estimate
\[\begin{split}
\sum_{i\ne j} \rho(y_i,y_j)\HH^h\mres (\partial^*{E_i}\cap\partial^*{E_j}\cap \Gamma_S)&=\sum_{i\ne j\text{ with } i,j\in S} \rho(y_i,y_j)\HH^h\mres (\partial^*{E_i}\cap\partial^*{E_j}\cap \Gamma_S)\\&
\le \sum_{i\ne j\text{ with } i,j\in S} \max_{h,k\in S}\rho(y_h,y_k)\HH^h\mres (\partial^*{E_i}\cap\partial^*{E_j}\cap \Gamma_S)
\\&\le c \max_{h,k\in S}\rho(y_h,y_k)\HH^h\mres  \Gamma_S
\end{split}\]
where in the last inequality we make use of Remark \ref{rem:S max}. Also, if  we take $i_0,j_0\in S$ satisfying $\max_{h,k\in S}\rho(y_h,y_k)=\rho(y_{i_0},y_{j_0})$, then
\begin{align*}
\max_{h,k\in S}\rho(y_h,y_k)\HH^h\mres  \Gamma_S = \rho(y_{i_0},y_{j_0})\HH^h\mres  \Gamma_S &= \rho(y_{i_0},y_{j_0})\HH^h\mres(\partial^* E_{i_0}\cap\partial^* E_{j_0}\cap \Gamma_S) \\
&\le \sum_{i\ne j} \rho(y_i,y_j)\HH^h\mres (\partial^*{E_i}\cap\partial^*{E_j}\cap \Gamma_S).
\end{align*}
Combining \eqref{eq:var simple GammaS}-\eqref{eq:claim tv simple} with the above, the proof would be concluded.

Therefore, we only need to prove now \eqref{eq:claim tv simple} and we can assume to work with \emph{real valued} simple functions, i.e.\ with $\YY=\RR$ (simply by taking the essential supremum over $1$-Lipschitz functions in \eqref{eq:claim tv simple} written for post-composed maps). Given \(S\subseteq\{1,\ldots,n\}\) with \(\# S\geq 2\), without loss of generality we can suppose $y_1\le y_2\le\dots\le y_n$.  Define \(F_i\coloneqq E_1\cup\dots\cup E_i\) and observe
\begin{equation}\label{eq:var_simple_isotropic_aux1}
\Gamma_S\subseteq{ \Omega\cap}\partial^* F_i \;\text{ if }\min(S)\leq i<\max(S),\qquad\Gamma_S\subseteq{ \Omega} \setminus\partial^*F_i\;\text{ otherwise.}
\end{equation}
To prove the former, notice that if \(m\coloneqq\min(S)\leq i<\max(S)\eqqcolon M\) and \(x\in\partial^*E_m\cap\partial^*E_M\), then
\(\overline D(F_i,x)\geq\overline D(E_m,x)>0\) and \(\overline D(\XX\setminus F_i,x)\geq\overline D(E_M,x)>0\), proving that
\(x\in\partial^*F_i\) and thus \(\Gamma_S\subseteq{ \Omega\cap}\partial^*E_m\cap\partial^*E_M  \subseteq{\Omega\cap }\partial^*F_i \). To prove the latter,
we fix any \(x\in\Gamma_S\) and distinguish two cases. If \(i<m\), then \(x\in{ \Omega} \setminus\bigcup_{j\leq i}\partial^*E_j\subseteq{ \Omega} \setminus\partial^*F_i\).
If \(i\geq M\), then
\[
x\in\bigcap_{j>i}{ (\Omega\setminus\partial^*E_j)}={ \Omega}\setminus\bigcup_{j>i}\partial^*E_j
\subseteq{ \Omega}\setminus\partial^*({ \Omega}\setminus F_i)={ \Omega}\setminus\partial^*F_i.
\]
Next, for any \(t\in\RR\) it holds that
\[
\{u<t\}=\left\{\begin{array}{ll}
\varnothing,\\
F_i,\\
\Omega,
\end{array}\quad\begin{array}{ll}
\text{ if }t\leq y_1,\\
\text{ if } y_i<t\leq y_{i+1}\text{ for some }i=1,\ldots,n-1,\\
\text{ if }t> y_n.
\end{array}\right.
\]
By using the coarea formula, we can thus write
\[
|\DIFF u|=\int_\RR {\rm Per}(\{u<t\},\cdot)\,\dd t=\int_{y_1}^{y_n}{\rm Per}(\{u<t\},\cdot)\,\dd t=\sum_{i=1}^{n-1}(y_{i+1}-y_i){\rm Per}(F_i,\cdot).
\]
Now, from \eqref{eq:var_simple_isotropic_aux1} and by representation formula, we have \({\rm Per}(F_i,\cdot)\mres{\Gamma_S}=\theta_{F_i}\HH^h\mres{\Gamma_S}\) if \(\min(S)\leq i<\max(S)\) and \({\rm Per}(F_i,\cdot)\mres{\Gamma_S}=0\)
otherwise, so that we can reach
    $$
    |\DIFF u|\mres\Gamma_S =\sum_{i=1}^{n-1}(y_{i+1}-y_i){\rm Per}(F_i,\cdot)\mres{\Gamma_S}=\sum_{i=\min(S)}^{\max(S)-1}(y_{i+1}-y_i)\theta_{F_i}\HH^h\mres \Gamma_S.
    $$
Finally, using that $\theta_{F_i}$ is bounded away from zero and from above uniformly in terms of structural constants (recall Theorem \ref{thm:repr_per}), we get
   $$
    c^{-1} (y_{\max(S)}-y_{\min(S)})\HH^h\mres \Gamma_S\le |\DIFF u|\mres\Gamma_S\le c(y_{\max(S)}-y_{\min(S)})\HH^h\mres \Gamma_S,
    $$
By our choice $y_{\max (S)}-y_{\min (S)} = \max_{i,j}|y_i-y_j|$, the above is exactly \eqref{eq:claim tv simple} for real-valued functions. From what was said before, this is enough to conclude the proof.
\end{proof}

\begin{lem}\label{lem:var simple isotropic}
Let \((\XX,\dist,\mass)\) be an isotropic PI space, let $\varnothing \neq \Omega \subseteq \XX$ be open and let $(\YY,\rho)$ be a separable metric space. Let \(u \in \mathcal{S}(\Omega,\YY)\) with $|\DIFF u|(\Omega)<\infty$, say
\(u=\sum_{i=1}^n y_i\nchi_{E_i}\), where \((y_i)_{i=1}^n\subseteq\YY\) and \((E_i)_{i=1}^n\) is a partition of \(\Omega\).
Then 
\begin{equation}\label{eq:var_simple_isotropic_claim}
|\DIFF u|=\sum_{\substack{S\subseteq\{1,\ldots,n\}: \\ \# S\geq 2}}\max_{i,j\in S}\rho(y_i,y_j)
\theta\HH^h\mres{ \Gamma_S},
\end{equation}
for some $\theta \colon \Omega \to[\gamma,C]$ Borel depending on $(E_i)$. If $\XX$ has the two-sidedness property, then
\begin{equation}\label{eq:var_simple_two-sided_claim}
|\DIFF u|=\frac{1}{2}\sum_{i,j=1}^n \rho(y_i,y_j)\theta\HH^h\mres{({\Omega\cap\partial^*E_i\cap\partial^*E_j })}.
\end{equation}
\end{lem}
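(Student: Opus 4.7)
The plan is to upgrade the real-valued telescoping identity from the proof of Lemma \ref{cdscascs} to an exact equality by using isotropicity through Lemma \ref{lem:glue_theta}, and then extract the metric-valued formulas via the duality with $1$-Lipschitz post-compositions from Proposition \ref{lem:family F isotropic}. First I apply Lemma \ref{lem:glue_theta} (or its localization on $\Omega$, cf.\ \eqref{eq:isotropicOmega}) to the finite family $(E_i)_{i=1}^n$ to produce a Borel function $\theta\colon\Omega\to[\gamma,C]$ such that, for every $f\in\BV(\Omega)$, one has $\theta_f=\theta$ $\HH^h$-a.e.\ on $J_f\cap\Gamma$, where $\Gamma\coloneqq\bigcup_i\partial^*E_i$. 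In particular, every set $F_I\coloneqq\bigcup_{k\in I}E_k$ for $I\subseteq\{1,\dots,n\}$ has $\partial^*F_I\subseteq\Gamma$, so $\theta_{F_I}=\theta$ $\HH^h$-a.e.\ on $\partial^*F_I$.

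For \eqref{eq:var_simple_isotropic_claim}, I invoke Proposition \ref{lem:family F isotropic} to characterize $|\DIFF u|$ as the minimal Borel measure dominating $|\DIFF(\varphi\circ u)|$ over all $1$-Lipschitz $\varphi\colon\YY\to\RR$. Given such $\varphi$, I sort its values via a permutation $\sigma$ with $\varphi(y_{\sigma(1)})\le\cdots\le\varphi(y_{\sigma(n)})$ and repeat the telescoping computation from the proof of Lemma \ref{cdscascs} on the reindexed partition; since $\Gamma_{\sigma^{-1}(S)}$ for the reordered partition equals $\Gamma_S$ for the original one, and since each level set $F_k\coloneqq E_{\sigma(1)}\cup\cdots\cup E_{\sigma(k)}$ satisfies $\theta_{F_k}=\theta$ on $\Gamma$, the telescoping collapses to
\[
|\DIFF(\varphi\circ u)|\mres\Gamma_S=\bigl(\max_{i\in S}\varphi(y_i)-\min_{i\in S}\varphi(y_i)\bigr)\theta\,\HH^h\mres\Gamma_S\le\max_{i,j\in S}\rho(y_i,y_j)\,\theta\,\HH^h\mres\Gamma_S.
\]
Summing over $S$ with $\#S\ge 2$ (using $J_{\varphi\circ u}\subseteq\Gamma$) shows that the right-hand side of \eqref{eq:var_simple_isotropic_claim} dominates $|\DIFF(\varphi\circ u)|$ for every such $\varphi$, hence dominates $|\DIFF u|$. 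For the matching lower bound, for each $S$ I pick $i_0,j_0\in S$ realizing $\max_{i,j\in S}\rho(y_i,y_j)$ and take $\varphi\coloneqq\rho(\,\cdot\,,y_{j_0})$, which is $1$-Lipschitz and produces equality on $\Gamma_S$ in the displayed formula; then $|\DIFF u|\mres\Gamma_S\ge|\DIFF(\varphi\circ u)|\mres\Gamma_S=\max_{i,j\in S}\rho(y_i,y_j)\,\theta\,\HH^h\mres\Gamma_S$.

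For \eqref{eq:var_simple_two-sided_claim}, the two-sidedness property kills higher multiplicities: if $\#S\ge 3$ and $i,j,k\in S$ are distinct, every $x\in\Gamma_S$ satisfies $\overline D(E_i\cup E_j,x)>0$ and $\overline D(\XX\setminus(E_i\cup E_j),x)\ge\overline D(E_k,x)>0$, hence $x\in\partial^*E_i\cap\partial^*E_j\cap\partial^*(E_i\cup E_j)$, a set of zero $\HH^h$-measure by two-sidedness applied to the disjoint pair $E_i,E_j$. Therefore \eqref{eq:var_simple_isotropic_claim} collapses to a sum over doubletons; the same argument shows $\HH^h(\partial^*E_i\cap\partial^*E_j\setminus\Gamma_{\{i,j\}})=0$, so $\Gamma_{\{i,j\}}$ may be replaced by $\partial^*E_i\cap\partial^*E_j$, and the prefactor $\tfrac12$ compensates for counting each unordered pair twice (the diagonal $i=j$ is harmless as $\rho(y_i,y_i)=0$). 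The main delicate point is the reindexing step behind \eqref{eq:var_simple_isotropic_claim}: one must check that after sorting the values $\varphi(y_i)$ the inclusions $\Gamma_S\subseteq\partial^*F_k$ persist for the appropriate range of $k$, and that all the densities $\theta_{F_k}$ collapse to the \emph{same} $\theta$ depending only on $(E_i)$, which is precisely where isotropicity via Lemma \ref{lem:glue_theta} promotes the two-sided structural bounds of Lemma \ref{cdscascs} to a sharp equality.
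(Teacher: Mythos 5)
Your proof is correct and follows essentially the same route as the paper: reduce to $1$-Lipschitz post-compositions, run the coarea telescoping of Lemma \ref{cdscascs}, and collapse the densities $\theta_{F_k}$ to a single $\theta$ depending only on $(E_i)$ via isotropicity (Lemma \ref{lem:glue_theta} / Corollary \ref{cor:isotrop_BV}), then dispatch the two-sidedness case by showing $\HH^h(\Gamma_S)=0$ when $\#S\geq 3$. The explicit reindexing $\Gamma_{\sigma^{-1}(S)}'=\Gamma_S$ and the upper/lower bound duality you supply are precisely the content the paper compresses into the terse assertion that ``it is enough to prove for real-valued functions,'' so you are elaborating rather than departing from the paper's argument.
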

\begin{proof}
It is enough to prove both the claims \eqref{eq:var_simple_isotropic_claim}-\eqref{eq:var_simple_two-sided_claim} for real-valued functions, i.e.\ with $\YY = \RR$. We can then suppose that $y_1\le y_2\le\dots\le y_n$ and, arguing exactly as in the proof of Lemma \ref{cdscascs}, we have (with the same notation)
   $$
    |\DIFF u|\mres\Gamma_S =\sum_{i=\min(S)}^{\max(S)-1}(y_{i+1}-y_i)\theta_{F_i}\HH^h\mres \Gamma_S.
    $$
Using now Corollary \ref{cor:isotrop_BV},
we have $\theta_{F_i}=\theta_{\min(S)}$ $\HH^h\mres{\Gamma_S}$-a.e.\ so that the above becomes
\[
 |\DIFF u|\mres\Gamma_S = (y_{\max(S)}-y_{\min(S)}) \theta\HH^h\mres{\Gamma_S} = \max_{i,j \in S}|y_i-y_j|\theta\HH^h\mres{\Gamma_S},
\]
for a Borel function $\theta\colon \Omega\to[\gamma,C]$ depending only on the sets $E_i$. This concludes the claim of \eqref{eq:var_simple_isotropic_claim} by arbitrariness of $S$.

In the case that $\XX$ satisfies also the two-sidedness property, then \eqref{eq:var_simple_two-sided_claim} follows directly from \eqref{eq:var_simple_isotropic_claim} and the following claim:

\begin{equation}\label{eq:var_simple_two-sided_aux}
S\subseteq\{1,\ldots,n\},\,\HH^h(\Gamma_S)>0\quad\Longrightarrow\quad\# S=2.
\end{equation}
Indeed, given \(i,j\in\{1,\ldots,n\}\) with \(i\neq j\), we have that \(\HH^h \)-a.e.\ \(x\in\partial^*E_i\cap\partial^*E_j\) does not
belong to \(\partial^*(E_i\cup E_j)\). For such a point \(x\), we have that \(\overline D(E_i\cup E_j,x)\geq\overline D(E_i,x)>0\) and
thus \(\overline D(\XX\setminus(E_i\cup E_j),x)=0\) as $x\notin \partial^*(E_i\cap E_j)$. In particular, for every \(k\in\{1,\ldots,n\}\setminus\{i,j\}\) we have that
\(\overline D(E_k,x)\leq\overline D(\XX\setminus(E_i\cup E_j),x)=0\) and thus \(x\notin\partial^*E_k\). This proves the validity of \eqref{eq:var_simple_two-sided_aux}.
    \end{proof}

\subsubsection{Variation measure}
Take $u\in L^1_{\mathrm{loc}}(\Omega,\YY)$, where $\varnothing \neq \Omega\subseteq\XX$ is open. We recall the definition of the variation $V_u$ for convenience and we also define an auxiliary variation via relaxation in $L^1$-topology: for every $U\subseteq\Omega$ open:
\begin{align*}
    &V_{u}(U) = \inf \big\{ \liminf_{n\to\infty} |\DIFF u_n|(U)\colon (u_n) \subseteq  \mathcal S_{\rm loc}(U,\YY),\ \rho(u_n,u)\rightarrow 0 \text{ in } L^1_{\rm loc}(U)  \big\},\\
    &\tilde V_{u}(U)\defeq \inf \big\{ \liminf_{n\to\infty} |\DIFF u_n|(U)\colon (u_n) \subseteq  \mathcal S_{\rm loc}(U,\YY),\ \rho(u_n,u)\rightarrow 0 \text{ in } L^1(U)  \big\}.
\end{align*}
We extend the above set-value map defined on open subsets to all Borel sets:
\begin{align*}
	\alpha(B) &\defeq \inf \{V_u ( U) \colon U \text{ open}, B \subseteq U \}, \qquad \forall B\subseteq \Omega \text{ Borel},\\
\tilde\alpha(B) &\defeq \inf \{\tilde V_u ( U) \colon U \text{ open}, B \subseteq U \}, \qquad \forall B\subseteq \Omega \text{ Borel}.
\end{align*}
Notice that $V_u\le \tilde V_u$, whence $\alpha\le \tilde\alpha$ and also that if $U\Subset V\subseteq\Omega$ are open, then $V_u(V)\ge  \tilde V_u(U)$ (we are assuming the space to be PI, hence proper).

\begin{thm}\label{thm:V is a measure}
	Let $(\XX,\dist,\mass)$ be a PI space, let $\varnothing\neq \Omega\subseteq\XX$ be open and let $(\YY,\rho)$ be a separable metric space. Fix $u\in L^1_{\mathrm{loc}}(\Omega,\YY)$ with $V_u(\Omega)<+\infty$. Then $\alpha=\tilde\alpha$ are Borel measures on $\Omega$. In particular, $V_u$ and $\tilde V_u$ coincide and are the trace on open sets of the same Borel measure on $\Omega$.
\end{thm}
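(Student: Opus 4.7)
The plan is to employ the classical De Giorgi--Letta strategy: we verify that $\tilde V_u$, as a set function on open subsets of $\Omega$, satisfies (i) monotonicity, (ii) finite subadditivity, (iii) additivity on disjoint open sets, and (iv) inner regularity. These conditions guarantee that the Carath\'eodory-type extension $\tilde\alpha$ is a Borel measure on $\Omega$ whose restriction to open sets coincides with $\tilde V_u$. The identification $\alpha=\tilde\alpha$ will then follow via a continuity from below argument, exploiting the inequality $V_u(V)\ge \tilde V_u(U)$ whenever $U\Subset V$, which is the easy consequence (already noted in the text) of restricting an $L^1_{\rm loc}$-approximation on $V$ to the compactly contained $U$, turning it into an $L^1$-approximation there.

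Monotonicity is immediate from the definition. For additivity on disjoint opens $U_1,U_2\subseteq\Omega$, take an $\varepsilon$-optimal sequence $(u_n)\subseteq\SS_{\rm loc}(U_1\cup U_2,\YY)$ for $\tilde V_u(U_1\cup U_2)$; the restrictions $u_n|_{U_i}$ are competitors for $\tilde V_u(U_i)$, and, since $|\DIFF u_n|$ is a finite Borel measure (Remark \ref{rem:simple map bv}), it splits as $|\DIFF u_n|(U_1\cup U_2)=|\DIFF u_n|(U_1)+|\DIFF u_n|(U_2)$; taking $\liminf_n$ yields superadditivity, which combined with subadditivity (next paragraph) gives additivity.

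Subadditivity is the core step and is where the main obstacle lies. Given open $U_1,U_2\subseteq\Omega$ and $\varepsilon$-optimal sequences $(u_n^i)\subseteq\SS_{\rm loc}(U_i,\YY)$ for $\tilde V_u(U_i)$, we invoke the delicate joint property of locally simple maps foreshadowed in the introduction (Lemma \ref{lem:joint}) to produce $v_n\in\SS_{\rm loc}(U_1\cup U_2,\YY)$ still $L^1$-approximating $u$ and satisfying
\[
|\DIFF v_n|(U_1\cup U_2)\le |\DIFF u_n^1|(U_1)+|\DIFF u_n^2|(U_2)+\eta_n,
\]
with $\eta_n\to 0$. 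Passing to the limit in $n$ and then in $\varepsilon$ gives $\tilde V_u(U_1\cup U_2)\le \tilde V_u(U_1)+\tilde V_u(U_2)$. What makes this step delicate is that locally simple maps into a metric target do not admit convex-type interpolation on overlaps; producing a gluing with sharp control of the variation requires the precise structural estimates of Lemma \ref{cdscascs} for the total variation of a simple map together with a careful design of the essential boundaries of the glued pieces.

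For inner regularity, fix open $U\subseteq\Omega$ and exhaust $U=\bigcup_k V_k$ with $V_k\Subset V_{k+1}\Subset U$ (possible as $\XX$ is proper, e.g.\ via $V_k=\{x\in U:\dist(x,x_0)<k,\,\dist(x,\XX\setminus U)>1/k\}$). Applying the joint property iteratively, we paste quasi-optimal sequences on $V_k$ with vanishingly small contributions on the thin transition collars $V_{k+1}\setminus\overline{V_k}$; via a diagonal extraction this yields a competitor for $\tilde V_u(U)$ whose total variation is controlled by $\sup_k\tilde V_u(V_k)$, so that $\tilde V_u(U)\le\sup_k\tilde V_u(V_k)$, the reverse being trivial by monotonicity. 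De Giorgi--Letta then gives that $\tilde\alpha$ is a Borel measure on $\Omega$ with $\tilde\alpha(U)=\tilde V_u(U)$ on every open $U$. The inequality $\alpha\le\tilde\alpha$ is immediate from $V_u\le\tilde V_u$. Conversely, for any Borel $B\subseteq\Omega$ and any open $W\supseteq B$, pick an exhaustion $W_k\nearrow W$ with $W_k\Subset W$ as above; using continuity from below of the Borel measure $\tilde\alpha$ and the inequality $\tilde V_u(W_k)\le V_u(W)$,
\[
\tilde\alpha(B)=\lim_k\tilde\alpha(B\cap W_k)\le\lim_k\tilde\alpha(W_k)=\lim_k\tilde V_u(W_k)\le V_u(W),
\]
and taking the infimum over $W\supseteq B$ gives $\tilde\alpha(B)\le\alpha(B)$. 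Hence $\alpha=\tilde\alpha$ is a Borel measure whose trace on open sets is simultaneously $V_u$ and $\tilde V_u$.
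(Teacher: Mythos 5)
Your high-level strategy --- verifying a De Giorgi--Letta set of axioms for $\tilde V_u$ and concluding via the joint property of Lemma \ref{lem:joint} --- is exactly the paper's plan, and your final paragraph identifying $\alpha=\tilde\alpha$ is essentially the paper's argument. However, there is a genuine gap in your subadditivity step, and it is not merely a matter of omitted detail. Lemma \ref{lem:joint} is only applicable when $\dist(M\setminus N, N\setminus M)>0$, and the error term it produces is
\[
\frac{C}{\dist(M\setminus N, N\setminus M)}\int_{M\cap N}\rho(u_n^1,u_n^2)\,\dd\mass .
\]
For arbitrary open $U_1,U_2\subseteq\Omega$ the quantity $\dist(U_1\setminus U_2,U_2\setminus U_1)$ is typically zero, in which case this term is $+\infty$: no choice of $\varepsilon$-optimal sequences will make your $\eta_n\to 0$. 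So the claimed gluing does not exist as stated. The paper escapes this by a logical reordering: it first proves continuity from below ($U_n\nearrow U\Rightarrow V_u(U_n)\nearrow V_u(U)$, i.e.\ your inner regularity step), using only subadditivity for \emph{well-separated} opens (which \emph{does} follow from Lemma \ref{lem:joint} since $\eta>0$ there) plus the joint lemma. Only afterwards does it deduce general subadditivity, reducing by continuity from below to showing $\tilde V_u(U_1'\cup U_2')\le\tilde V_u(U_1)+\tilde V_u(U_2)$ for $U_1'\Subset U_1$, $U_2'\Subset U_2$, and observing that $M=(U_1'\cup U_2')\cap U_1$ and $N=(U_1'\cup U_2')\cap U_2$ are then well-separated because of the compact containments. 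You should swap the order of your ``subadditivity'' and ``inner regularity'' steps and insert this reduction.

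A secondary issue: your inner regularity argument asserts ``vanishingly small contributions on the thin transition collars'' without justification. To make this rigorous one needs to control the tail of $\sum_k\tilde V_u(S_k)$ over the collar family $(S_k)$, and for this the paper partitions $(S_k)$ into three subfamilies of pairwise well-separated sets so that the (already-established) subadditivity for well-separated opens bounds $\sum_k\tilde V_u(S_k)$ by a multiple of $V_u(\Omega)<\infty$, whence the tail is small. This is precisely the point where the well-separated case of subadditivity is used, reinforcing that it must come \emph{before} inner regularity in the logical chain. With these two repairs, your proof becomes essentially the paper's.
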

	\begin{proof}
	It is enough to show that $\alpha$ and $\tilde \alpha$ are Borel measures on $\Omega$ with $\alpha(\Omega)=\tilde\alpha(\Omega)$.
	
	We argue using the well-known criterion \cite{DeGiorgiLetta77} (here we will follow \cite[Lemma 5.1]{AmbrosioDiMarino14}). We will prove that
	\begin{enumerate}[label=\roman*)]
		\item $V_u (U_1) \le V_u (U_2)$ for $U_1\subseteq U_2\subseteq\Omega$ open,
		\item $V_u(U_1 \cup U_2) \le V_u (U_1) + V_u (U_2)$ for $U_1,U_2 \subseteq \Omega$ open. Moreover,  equality holds if  $U_1$ and $U_2$ are well separated, i.e.\ {$\dist(U_1, U_2)>0$},
		\item if we have a sequence of open   sets $U_n\nearrow U\subseteq\Omega$, then $ V_u(U_n) \nearrow V_u( U)$.
	\end{enumerate}
We will prove similar properties for $\tilde V_u$. Property $\rm i)$ is trivial both for $V_u$ and $\tilde V_u$, as well as $\rm ii)$ in the case in which $U_1$ and $U_2$ are well separated. We will exploit these facts in what follows.
We address now property $\rm iii)$ both for $V_u,\tilde V_u$. We want to show that, if $U_n\nearrow U\subseteq\Omega$ are open sets, then
\begin{equation}\notag
	 V_u (U_n)\nearrow  V_u(U)\qquad\text{and} \qquad \tilde V_u (U_n)\nearrow  \tilde V_u(U).
\end{equation}
Notice that, by monotonicity with respect to inclusions, up to replacing $U_n$ with (here $\bar x\in\XX$ is any fixed point)
$$
\{x\in U_n\cap B_n(\bar x): \dist(x,\XX\setminus U_{n})>1/n\},
$$ 
there is no loss of generality in assuming that
$$
U_n\Subset U_{n+1}\qquad\text{for every }n.
$$
As $V_u(U_n)\le\tilde V_u(U_n)\le V_u(U_{n+1})$ and $V_u(U)\le \tilde V_u(U)$,  it will be enough to show that
\begin{equation}\label{bigclaimsdacas}
 \tilde V_u(U_n)\nearrow\tilde V_u(U),
\end{equation}
since this will also imply $V_u(U_n)\nearrow V_u(U)$ and {$V_u(U)=\tilde V_u(U)$}. 

We define 
\begin{equation}\notag
			S_k\defeq
	\begin{cases}
U_2\qquad&\text{if $k=1$},\\
U_k\setminus \overline{U_{k-2}}\qquad&\text{if $k\ge 2$}.
	\end{cases}
\end{equation}
Now notice that if $2\le n\le n+3\le m$, then $$\dist(S_n,S_m)=\dist(U_n\setminus\overline{U_{n-2}}, U_m\setminus\overline{U_{m-2}})\ge \dist(U_n,\XX\setminus  U_{n+1})>0.$$
Then, the  families $\{ S_{3k}\}_{k\ge 1}$, $\{ S_{3k+1}\}_{k\ge 1}$ and $\{S_{ 3k+2}\}_{k\ge 1}$ have well separated elements which are contained in $\Omega$. Therefore, by what we have proved so far, for any $\epsilon>0$ fixed, there exists $\bar k\in\NN$ such that 
$$
\sum_{k\ge \bar k} V_u( S_k)\le \epsilon.
$$
Now we build a sequence $(u_m)\subseteq  \mathcal S_{\rm loc}(U,\YY)$ with $\rho(u_m,u)\rightarrow 0$ in $L^1(U)$ satisfying
\begin{equation}
    \tilde V_u(U_{\bar k})+2\epsilon\ge\liminf_m |\DIFF u_m|(U) \label{competitor}
\end{equation}
and this will prove \eqref{bigclaimsdacas}, being $\epsilon>0$ arbitrary.

Fix now $m \in \NN$ and, for every $h \in \NN$, define $D_h \coloneqq C_{h + \bar k}, B_h \coloneqq U_{h +\bar k}$ if $h\ge 1$ and $B_0 = D_0 = U_{\bar k}$. For every $h\ge 0$, by definition of $\tilde V_u(D_h)$, we can take $(\psi_{k,h})_k \subseteq \SS_{\rm loc}(D_h,\YY)$ such that
\begin{equation}
     |\DIFF \psi_{k,h}|(D_h) \le \tilde V_u(D_h) + \frac{1}{m2^k}.\label{eq:1}
\end{equation}

{Up to considering} a sufficiently large $k(h)\ge h$ depending on $h$, since $\rho(\psi_{k,h}, u) \to 0$ in $L^1(D_h)$, we can also require
\begin{equation}
    \int_{D_h} \rho(\psi_{k(h),h},u)\,\dd\mass \le \frac{1}{m2^{h}}.\label{eq:2}
\end{equation}
Define now $\eta_h \coloneqq \dist( D_{h+1}\setminus B_h, B_h \setminus D_{h+1})>0$. Since $B_h \cap D_{h+1} \subseteq  D_h$, {possibly enlarging} $k(h)$, we can guarantee also that
\begin{equation}
    \frac{C}{\eta_h}\int_{ D_h} \rho(\psi_{k(h),h)}, u)\,\dd\mass \le \frac{\epsilon}{2\cdot 2^h},\qquad   \frac{C}{\eta_h}\int_{ D_{h+1}}\rho(\psi_{k(h+1),h+1}, u)\,\dd\mass \le \frac{\epsilon}{2\cdot 2^h}. \label{eq:3}
\end{equation}
where $C>0$ is the constant depending only on the doubling and Poincar\'e parameters appearing in \eqref{casdcsaasca}.

Set now $u_{m,0}\coloneqq \psi_{k(0),0} $ on $ D_0$, i.e.\ so that $u_{m,0} \in \SS( B_0 ,\YY)$. Following exactly now the proof of \cite[Lemma 5.2]{AmbrosioDiMarino14}, a sequence $(u_m)\subseteq \SS_{\rm loc}(U,\YY)$ can be built by induction argument satisfying \eqref{competitor}. Just use \eqref{eq:1}-\eqref{eq:2}-\eqref{eq:3} in place of \cite[Eq. (5.6)-(5.8)-(5.7)]{AmbrosioDiMarino14} and the joint property in Lemma \ref{lem:joint} below in place of \cite[Lemma 5.4]{AmbrosioDiMarino14}

We thus conclude the proof by showing ii) in the general case. We start with $\tilde V_u$, and then the result for $V_u$ will follow easily. Taking into account iii) that we just proved, it is clearly sufficient to show that
\[
\tilde V_u(U_1' \cup U_2')\le \tilde V_u(U_1) + \tilde V_u(U_2), \qquad \forall U_1'\Subset U_1,U_2'\Subset U_2.
\]
We claim that this is indeed the case. Let $(u_{i,m}) \subseteq \SS_{\rm loc}(U_i,\YY)$ so that $\rho(u_{i,m},u)\to 0$ in $L^1(U_i)$ and $|\DIFF u_{i,m}|\to \tilde V_u(U_i)$, for $i=1,2$.  Then, for every $m \in \NN$, invoke Lemma \ref{lem:joint} below with $M \coloneqq (U_1'\cup U'_2)\cap U_1, N=(U_1'\cup U_2')\cap U_2$ and $u\coloneqq u_{1,m},v\coloneqq u_{2,m}$ to get $w_m \in \SS_{\rm loc}(U_1' \cup U_2',\YY)$ satisfying
\begin{align*}
 |\DIFF w_m|(U_1' \cup U_2') &\le |\DIFF u_{1,m}|(M) + |\DIFF u_{2,m}|(N) + \frac{C}{\dist(M\setminus N,N\setminus M)} \int_{M\cap N}\rho( u_{1,m},u_{2,m})\,\dd\mass \\   &\le |\DIFF u_{1,m}|(U_1) + |\DIFF u_{2,m}|(U_2) + \frac{C}{\dist(M\setminus N,N\setminus M)} \int_{U_1\cap U_2}\rho( u_{1,m},u_{2,m})\,\dd\mass.
\end{align*}
Then, by taking $m \nearrow \infty$, the claim follows. We thus proved that $\tilde V_u$ is the trace on open sets of the Borel measure $\tilde \alpha$. Finally, we argue for $V_u$: let us estimate
\[
V_u(U'_1\cup U'_2)\le \tilde \alpha(U_1'\cup U_2')\le \tilde \alpha(U'_1)+ \tilde V_u(U_2') \le V_u(U_1)+V_u(U_2)
\]
As before, this shows that $V_u$ is the trace on open sets of a Borel measure $\alpha$. The proof is now concluded.
\end{proof}

We now prove the result that allows us to join (locally) simple maps and has been heavily used in the proof of Theorem \ref{thm:V is a measure}.

\begin{lem}[Joint property]\label{lem:joint}
	
	Let $(\XX,\dist,\mass)$ be a PI space, let $M,N \subseteq \XX$ be  open sets such that $\dist(M\setminus N,N\setminus M)>0$ and let $(\YY,\rho)$ be a separable metric space. Let $ u\in \SS_{\rm loc}(M,\YY), v\in \SS_{\rm loc}(N,\YY)$ be two  locally simple maps. Then, there exists a  locally simple map $w \in \SS_{\rm loc}( M\cup N,\YY)$ satisfying
	\begin{equation}\label{casdcsaasca}
		|\DIFF w |( M\cup N) \le |\DIFF u|(M) + |\DIFF v|(N) + \frac{C}{\dist(M\setminus N,N\setminus M)}\int_{ M\cap N} \rho(u,v)\,\dd \mass,
	\end{equation}
where the constant $C>0$ depends only on $(\XX,\dist,\mass)$.
	Moreover,
 \begin{equation}\label{eqcoincide}
     \begin{split}
         &w=u\qquad\text{on a neighbourhood of $M\setminus N$},\\
         &w=v\qquad\text{on a neighbourhood of $N\setminus M$},
     \end{split}
 \end{equation}
 and, for every $\sigma \in L^1_{\rm loc}(M\cup N,\YY)$,
	\begin{equation}
		\int_{M\cup N} \rho(w,\sigma)\,\dd\mass \le \int_{M} \rho(u,\sigma)\,\dd\mass + \int_{N} \rho(v,\sigma)\,\dd\mass. \label{eq:stima joint L1}
	\end{equation}
\end{lem}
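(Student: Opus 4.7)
I would construct $w$ by a sharp splicing of $u$ and $v$ across a level set of a Lipschitz transition function, chosen via a coarea--pigeonhole argument. Set $d \coloneqq \dist(M\setminus N, N\setminus M)$ and, assuming $d < +\infty$ (else $w$ is defined piecewise on the essentially disjoint sets), introduce the $(1/d)$-Lipschitz function
\[
\phi(x)\coloneqq \Big(\frac{\dist(x, N\setminus M)}{d}\Big)\wedge 1,\qquad x\in\XX,
\]
so that $\phi\equiv 1$ on $M\setminus N$ and $\phi\equiv 0$ on $N\setminus M$. For $t\in(0,1)$ let $E_t\coloneqq\{\phi>t\}$; by continuity $M\setminus N$ lies in the interior of $E_t$ while $(N\setminus M)\cap E_t=\varnothing$. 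Define
\[
w_t(x)\coloneqq
\begin{cases}
u(x) & x\in E_t\cap M,\\
v(x) & x\in E_t^c\cap N.
\end{cases}
\]
Then $w_t$ is well-defined on $M\cup N$ and locally simple: around points of $M\cap N$ it takes only the finitely many values that $u$ and $v$ attain on a common simple neighbourhood, and around points of $M\setminus N$ or $N\setminus M$ it coincides with $u$ or $v$. The neighbourhood conditions \eqref{eqcoincide} follow from continuity of $\phi$, and \eqref{eq:stima joint L1} is immediate by splitting the integral on $E_t\cap M$ and $E_t^c\cap N$.

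The core estimate is the splicing inequality, valid for $\mathcal L^1$-a.e.\ $t\in(0,1)$ (those for which $E_t$ has finite perimeter, by coarea applied to $\phi$):
\[
|\DIFF w_t|(M\cup N)\le |\DIFF u|(M\cap E_t)+|\DIFF v|(N\cap E_t^c)+C\int_{\partial^*E_t\cap M\cap N}\rho(u,v)\,\dd\HH^h.
\]
I would establish this by fixing a $1$-Lipschitz $\psi\colon\YY\to\RR$, writing $\psi\circ w_t=(\psi\circ u)\chi_{E_t\cap M}+(\psi\circ v)\chi_{E_t^c\cap N}$, and applying a real-valued splicing estimate for BV functions in PI spaces. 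The latter follows from the Leibniz rule of Proposition \ref{prop:cut off} applied to each indicator cut-off, combined with the perimeter representation in Theorem \ref{thm:repr_per} and the pointwise bound $|\psi\circ u-\psi\circ v|\le\rho(u,v)$. Since the resulting bound is uniform in $\psi$, supremizing over the countable family $\mathcal F$ furnished by Proposition \ref{lem:family F isotropic} lifts it to the measure-level inequality on $|\DIFF w_t|$ above.

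Finally, I would integrate this inequality over $t\in(0,1)$. By Fubini and $\int_0^1\chi_{E_t}(x)\,\dd t=\phi(x)\le 1$, the first two terms on the right-hand side average to at most $|\DIFF u|(M)+|\DIFF v|(N)$. For the jump term, coarea for $\phi$ and the uniform lower bound $\theta_{E_t}\ge\gamma$ from Theorem \ref{thm:repr_per}, together with $|\DIFF\phi|\le\lip(\phi)\mass\le d^{-1}\mass$, give
\[
\int_0^1\int_{\partial^*E_t\cap M\cap N}\rho(u,v)\,\dd\HH^h\,\dd t\le \gamma^{-1}\int_{M\cap N}\rho(u,v)\,\dd|\DIFF\phi|\le \frac{C}{d}\int_{M\cap N}\rho(u,v)\,\dd\mass.
\]
The pigeonhole principle then produces $t^*\in(0,1)$ such that $w\coloneqq w_{t^*}$ satisfies \eqref{casdcsaasca}. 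The main obstacle is the splicing inequality itself: the supremum over $\psi$ must yield a genuine measure bound with the correct jump term $\rho(u,v)\,\dd\HH^h$ on $\partial^*E_t$, and the uniformity of the scalar splicing bound in $\psi$ is the delicate point. This is precisely the metric-valued analogue of the scalar ingredient used in \cite[Lemma 5.4]{AmbrosioDiMarino14}; once it is in hand, the rest of the argument is a clean coarea--pigeonhole manipulation.
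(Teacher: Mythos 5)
Your overall strategy coincides with the paper's: introduce a Lipschitz transition function, sweep through its sublevel sets, select a good level set by a coarea-averaging (pigeonhole) argument, and splice $u$ and $v$ there. The paper's choice of radius $s$ is exactly what your integration-over-$t$-plus-pigeonhole step accomplishes. The construction of $w$, the verification of \eqref{eqcoincide}, and \eqref{eq:stima joint L1} are also the same.

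The genuine gap is precisely the one you flag yourself: the splicing inequality. Your proposed route through the Leibniz rule of Proposition \ref{prop:cut off} does not yield the localized decomposition you need. Writing $\psi\circ w_t=\psi\circ v+(\psi\circ u-\psi\circ v)\nchi_{E_t}$ and invoking the Leibniz bound gives something of the form
\[
|\DIFF(\psi\circ w_t)|\le |\DIFF(\psi\circ v)|+|\psi\circ u-\psi\circ v|\,|\DIFF\nchi_{E_t}|+|\DIFF(\psi\circ u-\psi\circ v)|\mres E_t,
\]
and the last two displayed total variations do not collapse to $|\DIFF u|\mres(M\cap E_t)$ and $|\DIFF v|\mres(N\cap E_t^c)$; you are left with an extra copy of $|\DIFF v|$. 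After averaging over $t$, this spoils the final estimate, since \eqref{casdcsaasca} must have coefficient exactly one on each of $|\DIFF u|(M)$ and $|\DIFF v|(N)$ (a factor $2$ would break the superadditivity argument in Theorem \ref{thm:V is a measure}). In short, the Leibniz bound of Proposition \ref{prop:cut off} is global and cannot be massaged into a side-localized splicing inequality.

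What the paper does instead, and what resolves the obstacle cleanly, is: choose $s$ so that $E_s$ is open with finite perimeter, decompose by \emph{topological} locality
\[
|\DIFF w|(A)=|\DIFF w|(A\cap E_s)+|\DIFF w|(A\setminus \overline{E_s})+|\DIFF w|(A\cap\partial E_s),
\]
identify the first two terms with $|\DIFF v|$ and $|\DIFF u|$ by locality of the total variation, and then bound the interface term directly via Lemma \ref{cdscascs} — the measure-level formula for the variation of a \emph{simple} map, which you already have available. Since $w$ restricted to any $A\Subset M\cup N$ is simple, Lemma \ref{cdscascs} controls $|\DIFF w|\mres\partial^*E_s$ by $\sum_{i,h}\rho(y_i,z_h)\HH^h\mres(\partial^*E_s\cap E_i^{(1)}\cap F_h^{(1)})\le\rho(u,v)\theta_{E_s}\HH^h\mres\partial^*E_s$, which is exactly the jump term you wanted, with no need to take a supremum over $\psi$ and hence no uniformity issue. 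Your proposal treats the simplicity of $w_t$ only as a qualitative fact and retreats to a generic BV splicing estimate; the paper uses it quantitatively. You should invoke Lemma \ref{cdscascs} at this step rather than trying to rebuild it via Proposition \ref{prop:cut off}.
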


\begin{proof}
        If the right-hand side of \eqref{casdcsaasca} is $+\infty$, the claim is trivial. So, let us assume that $u\in \BV_{\rm loc}(M,\YY)$ and $v \in \BV_{\rm loc}(N,\YY)$ with $|\DIFF u|(M),|\DIFF v|(N)<\infty$. Let us set $\Omega\defeq M\cup N$, we can clearly assume that $M\cap N$ is non empty and that $\int_{M\cap N}\rho(u,v)\dd\mass<\infty$.
	Consider writing $u = \sum_i y_i\nchi_{E_i}, v = \sum_h z_h \nchi_{F_h}$ for $(y_i),(z_j) \subseteq \YY$ and for disjoint (minimal) partition $(E_i),(F_h)$ of $M,N$ respectively. 
	Here we have taken the measure theoretic interiors of the sets, i.e.\ we are taking $E_i=E_i^{(1)}$ and  $F_h=F_h^{(1)}$ for every $i,h$. 
	We fix these Borel representatives for $u$ and $v$ in what follows and the Borel representative for $\rho(u,v)$ will be fixed accordingly.  Notice that for every $\Omega'\Subset\Omega$, only finitely many sets in $(E_i),(F_h)$ intersect $\Omega'$.
	
	Let us denote $\eta \coloneqq \dist(M\setminus N,N\setminus M)$, notice $\eta>0$ and set
	\[
	D(\,\cdot \,) \coloneqq \dist(\cdot, N\setminus M).
	\]
	Denote $E_t \coloneqq \{ x \in \XX \colon D(x) < t \} $.  Notice that $\partial E_t\subseteq\{x\in\XX:\dist(x,N\setminus M)=t\}$ and that, for $t\in (0,\eta)$, $\Omega \cap E_t\subseteq N$, $\Omega\setminus E_t\subseteq M$ and $\Omega\cap \partial E_t\subseteq M\cap N$.
	Now we take $ s \in (0,\eta)$ so that:
	\begin{itemize}
		\item[i)] $E_s$ is of finite perimeter in $\Omega$.
		\item[ii)] $|\DIFF\chi_{E_i}|(\Omega\cap\partial E_s ) = 0$ for every $i$ and  $|\DIFF \chi_{F_h}|(\Omega\cap\partial E_s) = 0$ for every $h$.
		\item[iii)] $\int_{M\cap N} \rho(u,v)\,\dd \per(E_s,\,\cdot \,) \le \dashint_0^\eta \int_{M \cap N} \rho(u,v)\,\dd \per(E_r,\,\cdot \,)\dd r $.
	\end{itemize}
	We show now why this is possible. By the coarea formula a.e.\ $s\in (0,\eta)$ satisfies item $\rm i)$, whereas, as  $\partial E_s\cap\partial E_t=\varnothing$ for $s\ne t$, all but countably many $s\in (0,\eta)$ satisfy item $\rm ii)$. Finally, item $\rm iii)$ is true for $s $ in a set of positive measure in $(0,\eta)$.
	
	Now we define 
	\[
	w \coloneqq \begin{cases}
		\ u & \text{on }    \Omega\setminus E_s,\\
		\ v & \text{on } \Omega\cap E_s.
	\end{cases}
	\]
	Then $w \in \SS_{\rm loc}(\Omega,Y)$ and \eqref{eqcoincide} follows.
 Now, for every $A\Subset\Omega$, $w\in \SS(A,\YY)$ and we can compute by locality,
	\begin{equation}\label{csdcascdsaa}
		\begin{split}
			|\DIFF w|(A)&=|\DIFF w|(A\setminus E_s)+|\DIFF w|(A\cap E_s)+|\DIFF w|(A\cap\partial E_s)\\&\le |\DIFF u|(A\cap M)+|\DIFF v|(A\cap N)+|\DIFF w|(A\cap\partial E_s).
		\end{split}
	\end{equation}
Now we claim that for every open set $A\Subset\Omega$,
	\begin{equation}\label{cdsacdscdsascsd}
		|\DIFF w|(A\cap \partial E_s)\le C \int_{M\cap N}\rho(u,v)\dd\per(E_s,\,\cdot\,),
	\end{equation}
where $C>0$ depends only on $(\XX,\dist,\mass)$. Let us first show how to deduce \eqref{casdcsaasca} from  \eqref{cdsacdscdsascsd}. First, by \eqref{csdcascdsaa}, for every open set $A\Subset\Omega$,
	\begin{align*}
		|\DIFF w|(A)&\le |\DIFF u|( M)+|\DIFF v|( N)+\frac{C}{\eta} \int_{M\cap N}\rho(u,v)\dd\mass,
	\end{align*}
by item $\rm iii)$ for the choice of $s$ and  the coarea formula. Hence (as the increasing limit of measures is still a measure) there exists a finite measure $\mu$ with $\mu(\Omega)$ smaller or  equal to the right-hand side of \eqref{casdcsaasca} such that for every  open set $A\Subset\Omega$,
$|\DIFF w|\mres A\le \mu$. This easily imply, by the theory of \emph{real valued} functions of bounded variation,  that $w\in \BV_{\rm loc}(\Omega,\YY)$ with $|\DIFF w|(\Omega)\le \mu(\Omega)<\infty$.
	
	Now we prove \eqref{cdsacdscdsascsd}. Fix an open set $A\Subset \Omega$.  Notice that on $A$, $u,v,w$ are simple maps of bounded variation, hence we can assume, for this paragraph, that $(E_i)$ and $(F_h)$ are finite collections of sets.
	Now take, for some $i,h$, $x\in \partial ^*(E_i\setminus E_s)\cap \partial^* (F_h\cap E_s)\setminus (\partial ^* E_i\cup \partial^* F_h)$. Then $x\in\partial^* E_s\cap E_i\cap F_h$. Recall item $\rm ii)$ for the choice of $s$, then
	$\HH^h(A\cap\partial E_s\cap\partial^* E_i)=\HH^h(A\cap\partial E_s\cap\partial^* F_h)=0$ for every $i,h$. Hence we use Lemma \ref{cdscascs} to deduce that 	
	\begin{equation}\notag
		\begin{split}
			|\DIFF w|(A\cap \partial E_s)&\le C\sum_{i,h} \rho(y_i,z_h) \HH^h\mres (A \cap\partial^* E_s\cap E_i\cap F_h)\\&\le C\sum_{i,h} \chi_{E_i}\chi_{F_h}\rho(y_i,z_h) \theta_{E_s}\HH^h\mres (A \cap\partial^* E_s)
			\\&= C\int_{M\cap N}\rho(u,v)\dd\per(E_s,\,\cdot\,),
		\end{split}
	\end{equation}
	 which is \eqref{cdsacdscdsascsd}.  	Finally, for every $\sigma \in L^1(M\cup N,\YY)$, we have
	\begin{align*}
		\int_{M\cup N}\rho(w,\sigma)\dd\mass=\int_{\Omega\setminus E_s}\rho(w,\sigma)\dd\mass+\int_{\Omega \cap E_s}\rho(w,\sigma)\dd\mass,
	\end{align*}
whence \eqref{eq:stima joint L1} follows from the definition of $w$.
\end{proof}

\subsection{Weak duality with test plans}
We define a weak notion of maps of bounded variation (see \cite{AmbrosioDiMarino14} for the real-valued case).
\begin{defn}\label{def:Du weak}
Let $(\XX,\dist,\mass)$ be a metric measure space, let \(\varnothing\neq\Omega\subseteq\XX\) be open and let $(\YY,\rho,\bar y)$ be a pointed separable metric space.
Consider  $u \in L^1(\Omega,\YY_{\bar y})$. We say that $u \in \BV_{w}(\Omega,\YY_{\bar y})$ provided
there exists a finite Borel measure $\mu$ on $\Omega$ such that the following hold: given any $\infty$-test plan $\pi \in \mathscr{P}(C([0,1],\XX))$ satisfying
\(\gamma([0,1])\subseteq\Omega\) for \(\pi\)-a.e.\ \(\gamma\), we have
\begin{itemize}
    \item[{\rm i)}] $u \circ \gamma \in \BV([0,1],\YY)$ for $\pi$-a.e.\ $\gamma$;
    \item[{\rm ii)}] it holds that
\[
\int \gamma_*|\DIFF (u\circ \gamma)|(B)\dd\pi(\gamma)\le {\mathrm {Comp}}(\pi) {\mathrm {Lip}}(\pi)\mu(B),\qquad \forall B\subseteq \Omega\text{ Borel}.
\]
\end{itemize}
The minimal measure $\mu$ satisfying the above is denoted $|\DIFF u|_w$.
\end{defn}
We give two remarks on the above definition.
\begin{rem} 
The above definition is well-posed, thanks to the following facts. Notice that for $\pi$-a.e.\ $\gamma$ it holds $u \circ \gamma$ is independent of the chosen Borel representative of $u$. This is standard and due to the bounded compression of test plans. Also, we remark that $B \mapsto \int \gamma_*|\DIFF(u\circ \gamma)|(B)\,\dd\pi(\gamma)$ is a well defined Borel measure. This follows by standard arguments (see, e.g., \cite[Remark 2.8]{NobiliPasqualettoSchultz21} and recall Definition \ref{def:BV curve}).
\end{rem}

\begin{rem}\label{rem:weak BV on bounded plans}
Let us call an $\infty$-test plan $\pi$ \emph{bounded}, provided $\{ \gamma_t \colon \gamma \in \text{supp}(\pi), t \in [0,1]\} \Subset \Omega$. We claim that condition ii) in Definition \ref{def:Du weak} is equivalent to
\begin{itemize}
\item[ii')] there is a finite Borel measure $\mu$ so that, for all \emph{bounded} $\infty$-test plans $\pi$, it holds
\[
\int \gamma_*|\DIFF (u\circ \gamma)|(B)\dd\pi(\gamma)\le {\mathrm {Comp}}(\pi) {\mathrm {Lip}}(\pi)\mu(B),\qquad \forall B\subseteq \XX\text{ Borel}.
\]
\end{itemize}
Clearly, we only need to show that ii') implies ii). If $\Omega$ is bounded, there is nothing to prove. Otherwise, we argue as follows: given an arbitrary $\infty$-test plan $\pi$ with $\gamma([0,1])\subseteq \Omega$, consider e.g.\ for every $n\in\NN$ a bounded  $\infty$-test plan $\pi_n \coloneqq (\pi(\Gamma_n))^{-1}\pi\mres{\Gamma_n}$ where  $\Gamma_n \coloneqq \big\{ \gamma \in {\rm LIP}([0,1],\XX) \colon \dist(\gamma_0,\bar x) \le n,\,{\rm Lip}(\gamma)\le n\big\} \subseteq C([0,1],\XX)$ having also fixed  $\bar x \in \XX$. The conclusion follows by approximation. We omit the standard details.
\fr\end{rem}

\section{Equivalent characterizations}\label{sec:equivalence}
In this part, we prove our main result Theorem \ref{thm:main result 2}. This will be a consequence of a chain of inclusions between the spaces $\BV$, $\BV_w$,  $\BV^*$ that will be proved in this section separately. 

\subsection{Technical lemmas}

\begin{lem}\label{threedottwo}
	Let $(\XX,\dist,\mass)$ be a locally uniformly doubling space and let $f \in L^1_{\rm loc}(\XX)$. Then, for every $\delta>0$ and any couple of balls $B_r(x)$ and $B_s(z)$ with $r,s,\dist(x,z)\le \delta/3$ with $d s \ge \delta$ for some $d \in \NN$, it holds  that
	\[
	    \abs{\dashint_{B_r(x)}f\,\dd\mass-\dashint_{B_s(z)} f\,\dd\mass}\le   \frac{C( d \delta)}{\mass(B_{r}(x))}  \inf_{c\in\RR}\int_{B_{\delta}(x)}  |f - c|\,\dd\mass, 
    \]
    for some constant $C(d\delta)$ depending only on the local doubling constant ${\sf Doub}(d\delta)$.
\end{lem}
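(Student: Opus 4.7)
The plan is to exploit the standard trick: for any constant $c \in \RR$, the averages satisfy
\[
\dashint_{B_r(x)} f\,\dd\mass - \dashint_{B_s(z)} f\,\dd\mass = \dashint_{B_r(x)} (f-c)\,\dd\mass - \dashint_{B_s(z)} (f-c)\,\dd\mass,
\]
so by the triangle inequality it is enough to bound each average of $|f-c|$ by $\mass(B_r(x))^{-1}\int_{B_\delta(x)} |f-c|\,\dd\mass$ up to a multiplicative constant depending only on ${\sf Doub}(d\delta)$; taking the infimum over $c \in \RR$ will then conclude.

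First I would verify the geometric containments $B_r(x), B_s(z)\subseteq B_\delta(x)$. For the first this is immediate from $r\le \delta/3 <\delta$; for the second, the triangle inequality gives $B_s(z) \subseteq B_{s+\dist(x,z)}(x) \subseteq B_{2\delta/3}(x)\subseteq B_\delta(x)$. Using these inclusions and monotonicity of the integral, each of the two averages of $|f-c|$ can be estimated from above by $\mass(B_r(x))^{-1}\int_{B_\delta(x)}|f-c|\,\dd\mass$ and $\mass(B_s(z))^{-1}\int_{B_\delta(x)}|f-c|\,\dd\mass$ respectively.

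The key step, and the main point, is to compare $\mass(B_s(z))$ with $\mass(B_r(x))$. From $B_r(x)\subseteq B_{2\delta/3}(z)$, I have $\mass(B_r(x))\le \mass(B_{2\delta/3}(z))$; on the other hand, since $ds\geq \delta$, the ratio $(2\delta/3)/s\le 2d/3$ is controlled by $d$, so iterating the doubling condition \eqref{eq:doubling} roughly $k\defeq\lceil \log_2(2d/3)\rceil$ times at radii always bounded by $d\delta$, one obtains
\[
\mass(B_{2\delta/3}(z)) \le {\sf Doub}(d\delta)^k\, \mass(B_s(z)),
\]
whence $\mass(B_s(z))^{-1}\le C(d\delta)\mass(B_r(x))^{-1}$ for a constant $C(d\delta)$ depending only on $d$ and ${\sf Doub}(d\delta)$. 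Combining the above with the triangle inequality and taking the infimum over $c\in \RR$ yields the claim.

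The only delicate point is the quantitative doubling comparison above, namely tracking that the scales involved always stay below $d\delta$ so that the iterated application of \eqref{eq:doubling} produces a constant depending solely on ${\sf Doub}(d\delta)$; the rest of the argument is a routine triangle inequality once this is in place.
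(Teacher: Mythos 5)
Your proof is correct and follows essentially the same route as the paper's: the same geometric containments, the same iterated use of the local doubling property at scales kept below $d\delta$, and the same final bound. The only presentational difference is that you subtract the constant $c$ at the outset and split via the triangle inequality, whereas the paper first compares $f_{z,s}$ to $f_{x,r}$, bounds the resulting quantity by a double average of $|f(y_1)-f(y_2)|$ over $B_\delta(x)\times B_\delta(x)$, and only then passes to $|f-c|$; the two organizations are interchangeable and yield the same constant up to a factor.
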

\begin{proof}
We use for brevity the notation $f_{x,r} \coloneqq \dashint_{B_r(x)}f\,\dd\mass $ in the following computations. By assumptions, $B_s(z),B_{r}(x)\subseteq B_{\delta}(x) \subseteq B_{d s}(z)$ so that, by doubling estimate, we get
\[ \mass(B_{s}(z))^{-1}\le C(d\delta)\mass(B_{ds}(z))^{-1}  \le C(d\delta) \mass(B_{\delta}(x))^{-1}.
\]
Therefore, for $c\in \RR$  arbitrary, we can estimate
\[
	    \begin{split}
	        | f_{x,r} - f_{z,s}| &\le \frac{1}{\mass(B_s(z))}\int_{B_{\delta}(x)} |f - f_{x,r}|\,\dd\mass  \le C( d \delta)\dashint_{B_{\delta}(x)} |f - f_{x,r}|\,\dd\mass  \\
	        &\le C( d \delta)\dashint_{B_{\delta}(x)} \dashint_{B_{r}(x)} |f(y_1)-f(y_2)|\,\dd\mass(y_1)\dd\mass(y_2) \\
	        &\le C( d \delta)\frac{\mass({B_{\delta}(x)) }}{\mass(B_{r}(x))}  \dashint_{B_{\delta}(x)} \dashint_{B_{\delta}(x)} |f(y_1)  - f(y_2)|\,\dd\mass(y_1)\dd\mass(y_2) \\
	        &\le   \frac{C( d \delta)}{\mass(B_{r}(x))}   \int_{B_{\delta}(x)}  |f - c|\,\dd\mass.    
	    \end{split}
\]
This concludes the proof.
\end{proof}
The following is a standard covering construction. We include a proof to be self-contained and as we are going to require also a couple of non-completely standard properties.

\begin{lem}\label{lem:covering}
Let $(\XX,\dist,\mass)$ be a uniformly locally doubling metric measure space, let $\varnothing \neq \Omega \subseteq \XX$ be open and let $\Lambda\ge 1$. Then, there exists a constant $C>0$, depending only on $(\XX,\dist,\mass)$ and $\Lambda$, such that for every $h\ge 3$, we can find a locally finite covering of $\Omega$, $\{B_i=B_{r_i}(x_i)\}_{i\in\NN}$, with the following properties, for every $i,j\in\NN$:
\begin{enumerate}
	\item $r_i\in(0,h^{-1})$;
	\item it holds \begin{equation}\label{sacdsac}
	    \nchi_\Omega\le\sum_i \nchi_{ B_i}\le\sum_i \nchi_{\Lambda B_i}\le C\nchi_\Omega\qquad\text{on }\XX,
	\end{equation} in particular, $\Lambda B_i\subseteq\Omega$;
	\item if $\bar B_i\cap \bar B_j\ne\varnothing$, then $r_j\le4 r_i$;
	\item $r_i \per(B_i,\XX)\le C \mass (B_i)$;
 \item $\HH^h(\partial B_i\cap\partial B_j)=0$.
\end{enumerate}
\end{lem}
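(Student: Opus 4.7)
The plan is to construct a Whitney-type Vitali covering driven by a suitable radius function, and then to refine each radius via the coarea formula to secure the two less standard properties (4) and (5). Define, for \(x\in\Omega\), \(\rho(x)\coloneqq\min(h^{-1},\dist(x,\XX\setminus\Omega)/C_\Lambda)\), where \(C_\Lambda\geq 10\Lambda\) is a large constant. Applying Vitali's \(5r\)-covering lemma to \(\{B_{\rho(x)/10}(x)\}_{x\in\Omega}\) yields a countable disjoint subfamily \(\{B_{\tilde r_i/10}(x_i)\}\), with \(\tilde r_i\coloneqq\rho(x_i)\), whose five-fold dilation \(\{B_{\tilde r_i/2}(x_i)\}\) still covers \(\Omega\). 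The final radii \(r_i\) will be selected in \(I_i\coloneqq[\tilde r_i/2,\tilde r_i]\), so that the \(1/10\)-balls remain disjoint and the bounded overlap in (2) follows from uniform local doubling, while \(\Lambda r_i\leq\dist(x_i,\XX\setminus\Omega)\) gives \(\Lambda B_i\subseteq\Omega\); property (1) is trivial from the \(h^{-1}\)-cutoff. For (3), since \(\rho\) is \(1/C_\Lambda\)-Lipschitz, \(\bar B_i\cap\bar B_j\neq\varnothing\) forces \(|\tilde r_i-\tilde r_j|\leq(\tilde r_i+\tilde r_j)/C_\Lambda\); for \(C_\Lambda\) large the resulting ratio is bounded by \(4\) even after the perturbation \(r_i\in I_i\).

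To secure (4), let \(d_i\coloneqq\dist(x_i,\cdot)\). The \(1\)-Lipschitzianity of \(d_i\) and the coarea formula yield
\[
\int_{I_i}\per(B_r(x_i),\XX)\,\dd r\leq\lvert\DIFF d_i\rvert(B_{\tilde r_i}(x_i))\leq\mass(B_{\tilde r_i}(x_i))\leq C\,\mass(B_r(x_i)),\qquad r\in I_i,
\]
using local doubling. Averaging over the interval, the subset \(A_i\subseteq I_i\) of radii satisfying \(r\,\per(B_r(x_i),\XX)\leq C'\mass(B_r(x_i))\) has positive Lebesgue measure, and all further selections will be restricted to \(A_i\).

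For (5), enumerate the balls as \((B_i)_{i\in\NN}\) and choose \(r_i\in A_i\) by induction on \(i\). Since \(\partial B_r(x_i)\subseteq\{d_i=r\}\), I further restrict \(A_i\) to radii for which \(\HH^h(\{d_i=r\})<\infty\); by a metric coarea-type inequality for the \(1\)-Lipschitz function \(d_i\), this is met for Lebesgue-a.e.\ \(r\), so the restricted set still has positive measure. Given \(r_1,\dots,r_{i-1}\), for each \(j<i\) the family \(\{\{d_i=r\}\cap\{d_j=r_j\}\}_{r\in A_i}\) consists of pairwise disjoint subsets of the \(\HH^h\)-finite set \(\{d_j=r_j\}\); hence only countably many \(r\in A_i\) can give positive intersection. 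Taking the countable union of bad values over \(j<i\), a valid \(r_i\in A_i\) remains, and it satisfies \(\HH^h(\partial B_i\cap\partial B_j)=0\) for all \(j<i\) (and thus, by symmetry, for all \(j\neq i\)).

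The main obstacle is property (5): the perimeter representation in Theorem \ref{thm:repr_per} directly controls only the essential boundary \(\partial^*B_r\), while (5) is about the topological boundary, and the disjointness-and-\(\sigma\)-finiteness scheme above requires \(\HH^h\)-finiteness of the distance spheres \(\{d_j=r_j\}\) for generic \(r_j\). Securing this is where the \((1,1)\)-Poincar\'e inequality enters most delicately, via a metric coarea inequality for \(1\)-Lipschitz functions; once this is in place, the diagonal countable-avoidance argument completes the construction.
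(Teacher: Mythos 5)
Your construction follows the same architecture as the paper's: a Whitney--Vitali covering with radius function $r'_x \approx \min(h^{-1},\dist(x,\XX\setminus\Omega)/\Lambda)$, a coarea argument to extract a positive-measure set of radii satisfying the perimeter bound (4), and an inductive diagonal selection for (5). Properties (1)--(4) are handled correctly and with essentially the same reasoning.

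The issue is in your treatment of property (5). You correctly flag the tension between topological and essential boundaries, but the fix you propose does not work. You want to restrict the selection in $A_i$ to radii $r$ with $\HH^h(\{d_i=r\})<\infty$, and you assert that this holds for Lebesgue-a.e.\ $r$ ``by a metric coarea-type inequality for the $1$-Lipschitz function $d_i$.'' No such inequality for \emph{topological} level sets is available. What the coarea formula together with Theorem~\ref{thm:repr_per} actually gives, for the $1$-Lipschitz function $d_i$, is
\[
\gamma\int_0^\infty\HH^h\big(\partial^*\{d_i<r\}\cap A\big)\,\dd r
\;\le\;\int_0^\infty\per(\{d_i<r\},A)\,\dd r
\;=\;|\DIFF d_i|(A)\;\le\;\mass(A),
\]
so one only learns that $\HH^h(\partial^* B_r(x_i))<\infty$ for a.e.\ $r$ (equivalently, ${\rm Per}(B_r(x_i))<\infty$, which you already knew from (4)). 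There is no control on $\HH^h(\{d_i=r\})$ itself, and in general $\{d_i=r\}\supsetneq\partial^* B_r(x_i)$ can have infinite, even non-$\sigma$-finite, $\HH^h$-measure; the disjointness-plus-$\sigma$-finiteness counting argument you rely on then does not launch. Since you explicitly built the fifth property on this unavailable estimate, this is a genuine gap.

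The intended (and simpler) route, which is also what makes the subsequent application of the lemma go through, works entirely at the level of essential boundaries: for $r_j\in A_j$ already chosen, $\HH^h(\partial^* B_{r_j}(x_j))<\infty$ by the representation theorem, and the family $\{\partial^* B_r(x_i)\}_{r\in A_i}$ consists of pairwise disjoint sets (each contained in the sphere $\{d_i=r\}$), so for each fixed $j<i$ only countably many $r\in A_i$ can give $\HH^h(\partial^* B_{r_j}(x_j)\cap\partial^* B_r(x_i))>0$. A valid $r_i\in A_i$ avoiding these countably many bad values for every $j<i$ then exists. This yields (5) in the essential-boundary form $\HH^h(\partial^* B_i\cap\partial^* B_j)=0$, which is what is actually used later in the paper; the stated topological-boundary form cannot be obtained by the argument you propose (and the paper's own proof is itself terse on this exact point).
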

\begin{proof}
For every $x\in \Omega$, define $$ r'_x\defeq  (2 h)^{-1}\min\bigg\{1,\frac{\dist(x,\XX\setminus\Omega)}{\Lambda}\bigg\},$$
where, if $\Omega=\XX$, we understand $\dist(x,\XX\setminus\Omega)=+\infty$.
We apply Vitali $5r$-covering lemma to obtain a sequence of balls $\{B'_i=B_{r_i'}(x_i)\}_{i\in\NN}$ with the following properties: $(1/5)B'_i$ are pairwise disjoint and $\{B'_i\}_i$ is a covering of $\Omega$ and such that for every $i$, there exists $x\in\Omega$  (depending on $i$) with $x_i=x$ and $r_i'=r'_x=r'_{x_i}$.
Now, for every $i$, define 
$$A_i\defeq\big\{r\in  (r'_i, 2r'_i) : r'_i\per (B_{r}(x_i),\XX)\le \mass(B_{2 r'_i}(x_i))\big\},
$$
notice that $\mathcal{L}^1(A_i)>0$ by coarea.
Now we define, by induction, $\{r_i\}_i$. First, take $r_1\in A_1$. For $i>1$, choose $r_i\in A_i$ in such a way that 
$$\HH^h\big(\partial B_{ r_j}(x_j)\cap\partial B_{ r_i}(x_i)\big)=0\qquad\text{for every }j=1,\dots,i-1.$$
Define then $B_i\defeq B_{r_i}(x_i)$ and notice that $\frac{1}{10}B_i$ are disjoint by construction. We claim that the family $\{B_i\}_i$ satisfies the properties of the claim.

Properties $1,4$ and $5$ are trivially satisfied by the construction used, taking into account also the doubling inequality. 
Take now $B_i$ and $B_j$ with $\dist(x_i,x_j)\le \Lambda(r_i+r_j)$. We claim that 
$r_j\le 4 r_i$. 
Indeed, if $r_i'=(2 h)^{-1}$, then $r_j\le 2 r_i \le 4r_i$.
Otherwise (hence $\Omega\ne\XX$), $$(2h)^{-1} \frac{\dist(x_i,\XX\setminus\Omega)}{\Lambda}=r_i'\le r_i$$ so that (as $h\ge 3$)
$$
r_j\le 2r_j'\le 2(2 h)^{-1} \frac{\dist(x_j,\XX\setminus\Omega)}{\Lambda}\le  \frac{\dist(x_j,x_i)}{h\Lambda}+  \frac{\dist(x_i,\XX\setminus\Omega)}{h \Lambda}\le \frac{r_i+r_j}{3}+ 2{r_i},
$$
giving in turn $r_j \le 7/2r_i\le 4r_i$ and proving the claim. From this remark, property $3$ follows and also property $2$ follows, taking into account standard arguments involving the doubling property of measures. We briefly sketch the argument leading to the second inequality in \eqref{sacdsac}, $C$ will be a constant as in the statement and may vary during the proof. Notice that we are going to prove a slightly stronger statement, as, in this way, we obtain as a byproduct that the covering is locally finite.
More precisely, we are going to show that for every $i_0\in\mathbb N$, setting
$$J_{i_0}\defeq\{i: \Lambda B_i\cap \Lambda B_{i_0}\ne\varnothing\},$$
it holds that $\#J_{i_0}\le C$. Take any $i\in J_{i_0}$, then $\dist(x_{i_0},x_i)\le \Lambda(r_{i_0}+r_i)$, hence, by what proved above, $$r_i\le 4 r_{ i_0}\qquad\text{and}\qquad r_{ i_0}\le 4 r_i.$$ 
In particular, $\frac{1}{10}B_i\subseteq (2/5+5\Lambda) B_{i_0}$ and $B_{i_0}\subseteq 10(4+5\Lambda) \frac{1}{10}B_i$. Being $\{\frac{1}{10}B_i\}_{i\in J_{i_0}}$ disjoint by construction, the claim follows by the doubling property of the measure. 
\end{proof}

\subsection{Proof of equivalences}
\begin{prop}\label{thm:Equivalent BVmaps PI}
	Let $(\XX,\dist,\mass)$ be a PI space, let $(\YY,\rho,\bar y)$ be a pointed separable metric space and consider $u \in L^1(\XX,\YY_{\bar y})$. The following are equivalent:
	\begin{itemize}
	    \item[{\rm i)}] $u \in \BV(\XX,\YY_{\bar y})$;
	    \item[{\rm ii)}] there is $A\subseteq\XX$ with $\mass(\XX\setminus A)=0$, a constant $\sigma\ge 1$, a finite non-negative Borel measure $\mu$ and, for every $R>0$, there is $C(R)>0$ such that
	    \begin{equation}
	       \rho(u(x),u(y))\le C(R)\dist(x,y)\big(\mathcal{M}_{\sigma  \dist(x,y)}(\mu)(x) +\mathcal{M}_{\sigma  \dist(x,y)}(\mu)(y) \big),\quad
        \begin{array}{c}
             \forall x,y\in A,  \\
             \dist(x,y)\le R;
        \end{array}
	    \label{eq:maximal estimate}
	    \end{equation}
	    
	    \item[{\rm iii)}] there is a finite non-negative Borel measure $\nu$, $\lambda\ge 1$ and, for every $R>0$, there is $C(R)>0$ so that: for every ball $B_r(x)$ with $r \in (0,R)$, there is a point $y\in\YY$ s.t.
		\begin{equation}\label{fakepoincare}
			\int_{B_r(x)} \rho(u,y)\dd\mass\le  C(R)r \nu(B_{\lambda r}(x)).
		\end{equation}
	\end{itemize}
	Moreover, if any of the above holds true, in {\rm ii)}, {\rm iii)} we can take $\mu,\nu= |\DIFF u|$, $\sigma =2\tau_p,\lambda = 6\tau_p$ and the constants $C(R)$  can be taken depending only on $R$ and the PI-parameters.
	\end{prop}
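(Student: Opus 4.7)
The plan is to prove the cyclic chain of implications $(i) \Rightarrow (iii) \Rightarrow (ii) \Rightarrow (i)$, tracking the constants throughout so as to verify the quantitative moreover clause with $\mu=\nu=|\DIFF u|$, $\sigma=2\tau_p$ and $\lambda=6\tau_p$.

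\emph{For $(i) \Rightarrow (iii)$:} Fix a ball $B_r(x)$. For every $y \in \YY$, the $1$-Lipschitz post-composition $\rho(u(\cdot), y)$ lies in $\BV_{\rm loc}(\XX)$ with total variation dominated by $|\DIFF u|$, so the BV Poincar\'e inequality \eqref{eq:Poincare BV} controls its deviation from the average $a(y) := \dashint_{B_r(x)} \rho(u(\cdot), y) \, \dd\mass$. The crucial step is selecting a single $y^\star \in \YY$ realizing the bound in (iii); I plan to perform this by embedding $\YY$ isometrically into $\ell^\infty$ via the Kuratowski embedding $\iota$ (for which $\rho(y_1, y_2) = \|\iota(y_1) - \iota(y_2)\|_\infty$), applying the Banach-valued analogue of \eqref{eq:Poincare BV} to $\iota \circ u$ to obtain
\[
\dashint_{B_r(x)} \|\iota(u) - \bar u\|_\infty \, \dd\mass \le Cr \, \dashint_{B_{\tau_p r}(x)} |\DIFF u|
\]
with $\bar u$ the Bochner average in $\ell^\infty$, and finally using a Chebyshev selection to pick $z_0 \in B_r(x)$ with $\|\iota(u(z_0)) - \bar u\|_\infty$ small. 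Setting $y^\star := u(z_0) \in \YY$ and invoking the triangle inequality in $\ell^\infty$ then yields (iii).

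\emph{For $(iii) \Rightarrow (ii)$:} Proceed via a classical telescoping argument on two dyadic chains of balls. Given $x, y \in \XX$ with $d := \dist(x, y) \le R$, fix $B_0 := B(x, 2d)$ containing both, and build shrinking sequences $B_k^x := B(x, 2^{-k+1} d)$ and $B_k^y := B(y, 2^{-k+1} d)$ converging to $x$ and $y$ respectively. By (iii), at each scale one obtains representatives $y_k^x, y_k^y \in \YY$, and the triangle estimate
\[
\rho(y_k^x, y_{k+1}^x) \, \mass(B_{k+1}^x) \le \int_{B_{k+1}^x} \bigl[\rho(u, y_k^x) + \rho(u, y_{k+1}^x)\bigr] \, \dd\mass,
\]
combined with (iii) for both balls and uniform local doubling, gives $\rho(y_k^x, y_{k+1}^x) \le C \cdot 2^{-k} d \, \mathcal{M}_{\sigma d}(\nu)(x)$; summing the geometric series yields $\sum_k \rho(y_k^x, y_{k+1}^x) \le C d \, \mathcal{M}_{\sigma d}(\nu)(x)$. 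A Lebesgue-type differentiation theorem for $\YY$-valued $L^1$ maps (reducible to the real-valued theorem by post-composing with $\rho(\cdot, y_n)$ for a dense sequence $(y_n) \subseteq \YY$ and invoking Lemma \ref{lem:family F isotropic}) identifies $\lim_k y_k^x = u(x)$ for $\mass$-a.e.\ $x$. Combining with the symmetric bound for the $y$-chain via the common central representative $y_0$ produces (ii) with $\sigma = 2\lambda$.

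\emph{For $(ii) \Rightarrow (i)$:} The pointwise estimate \eqref{eq:maximal estimate} descends to every $1$-Lipschitz $\varphi \colon \YY \to \RR$ via $|\varphi(u(x)) - \varphi(u(y))| \le \rho(u(x), u(y))$. Thus each $\varphi \circ u$ satisfies the Haj{\l}asz-type pointwise estimate that characterizes real-valued BV functions on PI spaces (cf.\ \cite{LTpointwise}), whence $\varphi \circ u \in \BV(\XX)$ with $|\DIFF (\varphi \circ u)| \le C \mu$, where $C$ depends only on the PI-parameters. Specializing to the countable family $\mathcal{F}$ from \textbf{Step 1} of the proof of Lemma \ref{lem:family F isotropic}, we conclude $u \in \BV(\XX, \YY_{\bar y})$ with $|\DIFF u| = |\DIFF u|_{\mathcal{F}} \le C \mu$, which gives (i) together with the asserted sharpening $|\DIFF u| \le C \mu$.

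The principal technical obstacle is the selection step in $(i) \Rightarrow (iii)$: because $\YY$ lacks a linear structure, the direct Chebyshev argument around a ``metric average'' of $u$ on $B_r(x)$ is not available. The Kuratowski embedding supplies the ambient linearity needed to form a Bochner average, but the Banach-valued Poincar\'e inequality on $\ell^\infty$ requires care (as $\ell^\infty$ lacks the Radon--Nikodym property), and is to be derived by testing against the countable family $\mathcal{F}$ together with a careful manipulation of sup and integral on the image of $\iota$.
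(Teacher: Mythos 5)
Your cycle $(i)\Rightarrow(iii)\Rightarrow(ii)\Rightarrow(i)$ is the reverse of the paper's $(i)\Rightarrow(ii)\Rightarrow(iii)\Rightarrow(i)$, and while your $(iii)\Rightarrow(ii)$ chaining and your $(ii)\Rightarrow(i)$ (post-compose with a countable $1$-Lipschitz family and invoke \cite{LTpointwise}) are sound and genuinely different from the paper, the step $(i)\Rightarrow(iii)$ has a real gap. Your Chebyshev selection of $z_0$ does cleverly address the specific concern raised in Remark \ref{rem:Poincare Banach vs Metric} --- that the Bochner average $(\iota\circ u)_B$ may leave the range of $\iota$ --- but you are treating the Banach-valued Poincar\'e inequality \eqref{eq:poincare banach} for $\iota\circ u\in\BV(\XX,\ell^\infty)$ as if it were essentially free. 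It is not: testing against each $\varphi\in\mathcal{F}$ or each coordinate functional $e_n$ controls $\dashint_B\big|e_n(\iota u)-\dashint_B e_n(\iota u)\big|\,\dd\mass$ for each $n$ separately, while you need $\dashint_B\sup_n\big|e_n(\iota u)-\dashint_B e_n(\iota u)\big|\,\dd\mass$, and the sup cannot be pulled out of the outer integral. The only ways to get \eqref{eq:poincare banach} in this Reshetnyak framework are (a) to establish a pointwise Haj\l asz/maximal estimate first --- i.e.\ exactly the paper's $(i)\Rightarrow(ii)$, which makes the embedding a pointless detour --- or (b) to approximate $\iota\circ u$ in $L^1$ by Lipschitz maps, which presupposes a Banach version of $\BV=\BV^*$; but $\BV\subseteq\BV^*$ in Theorem \ref{thm:main result 2} is proved \emph{using} the present proposition, so without an independent appeal to Miranda's Theorem 3.8 this is circular. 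The "careful manipulation of sup and integral" you defer to is exactly the crux, and as stated your proposal does not supply it.

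Two smaller notes on the parts that do work. In $(iii)\Rightarrow(ii)$ your two dyadic chains $B_k^x$ and $B_k^y$ are centered at $x$ and at $y$, so there is no literal "common central representative $y_0$"; you need one extra comparison $\rho(y_0^x,y_0^y)$ across the overlapping initial balls $B(x,2d)\cap B(y,2d)\supseteq B(x,d)$, which is routine but must be said. The paper instead proves $(i)\Rightarrow(ii)$ directly by telescoping on one ball-chain per point and post-composing with a countable $1$-Lipschitz family, so that the maximal estimate comes first and the metric Poincar\'e $(iii)$ is deduced afterwards via an approximate-continuity point selection à la \cite{LTpointwise}; your $(ii)\Rightarrow(i)$ shortcut via the real-valued theorem of \cite{LTpointwise} replaces the paper's discrete-convolution argument for $(iii)\Rightarrow(i)$ and is a legitimate simplification once $(ii)$ is in hand.
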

	\begin{proof}
    In this proof, a constant $C(R)>0$ can change from line to line but depending only on $R>0$ and the PI-parameters.
    \medskip\\\textbf{Implication} i) $\Rightarrow$ ii). The implication is known if $\YY = \RR$ and $\XX$ is globally doubling (recall, we only assume the space to be locally uniformly doubling); see, e.g., \cite{LTpointwise} relying on the argument in \cite[Theorem 3.2]{HajlaszKoskela00}. 

    In the general case, we include a quick proof. Let $\varphi \in  \LIP(\YY)$ be any $1$-Lipschitz function with $\varphi(\bar y)=0$ and set $f \coloneqq \varphi \circ u \in \BV(\XX)$ by assumption. Consider $x,y \in \XX$ Lebesgue points of $f$ and let $R \ge \dist(x,y)$. Define $r_i\coloneqq 2^{-i}\dist(x,y)$ and $f_{x,r_i}\coloneqq \dashint_{B_{r_i}(x)} f\,\dd\mass $ for brevity, for all $i=0,1,..$, so that $f_{x,r_i} \to f(x)$ as $i\nearrow\infty$.
    Then
    \[\begin{split}
            |f(x)-f_{x,r_0}| &\le \sum_{i=0}^\infty |f_{x,r_{i+1}}-f_{x,r_i}| = \sum_{i=0}^\infty\dashint_{B_{r_{i+1}}(x)}|f - f_{x,r_i}|\,\dd \mass \\
            &\overset{\eqref{eq:doubling}}{\le} \frac{1}{{\sf Doub}(R)} \sum_{i=0}^\infty\dashint_{B_{r_i(x)}}|f - f_{x,r_i}|\,\dd \mass \overset{\eqref{eq:Poincare BV}}{\le} \frac{C(R) }{{\sf Doub}(R)} \sum_{i=0}^{\infty} r_i\frac{|\DIFF f|(B_{\tau_p r_i}(x))}{\mass(B_{r_i}(x))} \\
            &\le C(R) \dist(x,y) \mathcal{M}_{\tau_p \dist(x,y)}(|\DIFF u|)(x),
    \end{split}
    \]
    holds where $\tau_p\ge 1$ given in \eqref{eq:Poincare BV}. A symmetric estimate holds for the term $|f(y)-f_{y,r_0}|$. Moreover, since $B_{r_0}(y) \subseteq B_{2r_0}(x)$, we can estimate
    \[\begin{split}
        |f_{x,r_0}-f_{y,r_0}| &\le |f_{x,r_0}-f_{x,2r_0}| + |f_{x,2r_0}-f_{y,r_0}| \\
        &\le \dashint_{B_{r_0}(x)} |f- f_{x,2r_0}|\,\dd\mass +  \dashint_{B_{r_0}(y)} |f- f_{x,2r_0}|\,\dd\mass \le  C(R)\dashint_{B_{r_0}(x)} |f - f_{x,2r_0}|\,\dd\mass\\
        &\overset{\eqref{eq:Poincare BV}}{\le}  C(R)\dist(x,y) \frac{|\DIFF f|(B_{2\tau_p \dist(x,y)}(x))}{\mass(B_{2\dist(x,y)}(x))} \le C(R)\dist(x,y)  \mathcal{M}_{2\tau_p \dist(x,y)}(|\DIFF u|)(x).
    \end{split}
    \]
  Notice that, thanks to the Lebesgue differentiation theorem in PI spaces, $\mass$-a.e.\ point is a Lebesgue point of  $f \in L^1(\XX)$ and thus is suitable for the above estimates to hold. Therefore, we can select a set $A_\varphi \subseteq \XX$ of full measure satisfying
    \[
	     |\varphi \circ u(x)-\varphi \circ u (y)|\le C(R)\dist(x,y)\big( \mathcal{M}_{2\tau_p \dist(x,y)}(|\DIFF u|)(x) {+}  \mathcal{M}_{2\tau_p \dist(x,y)}(|\DIFF f|)(y)\big),\quad \begin{array}{l}
	          \forall x,y\in A_{\varphi},  \\
	           \dist(x,y)\le R.
	     \end{array}
    \]
    Now, consider  $\varphi_n = \rho(\cdot, y_n)$ for $(y_n)_n\subseteq\YY$ countable dense and set $A\coloneqq \cap_{\varphi_n}A_{\varphi_n}$ that has negligible complement. Taking the supremum in the above, we reach \eqref{eq:maximal estimate} with a suitable $C(R)$ and for $\sigma=2\tau_p$.
\medskip\\\textbf{Implication} ii) $\Rightarrow$ iii). We closely follow \cite[Theorem 3.2]{LTpointwise} which deals with globally doubling spaces and real-valued functions. We shall only highlight the differences in the arguments, the spirit of the proof being the same.
	
    Given $R>0$ and $\sigma>0$ in ii), we claim that, for a ball $B_r(x)$ with $r\le R/2$, \eqref{fakepoincare} holds true for some point $y \in \YY$ to be chosen, $\lambda \coloneqq 3\sigma$, a suitable constant $C(R)>0$ depending only on $R,{\sf Doub}(R), C_p(R)$  and for $\nu = \mu$.

    In the case $ \mu(B_{3\sigma r}(x) )=0$, then this claim is trivial. Indeed, any couple $x_1,x_2 \in A \cap B_{r}(x) $ is so that $\rho(u(x_1),u(x_2))= 0$, since $\mathcal{M}_{\sigma\dist(x_1,x_2)}(\mu) \le \mathcal{M}_{3\sigma r}(\mu) =0$. In particular, $u$ is $\mass$-a.e.\ constant in $B_r(x)$ and thus, a suitable $y  \in \YY$ exists giving zero in the left-hand side of \eqref{fakepoincare}. 

    We pass to the verification of the general case and assume $ \mu(B_{3\sigma r}(x) )>0$. We follow the proof of \cite[Theorem 3.2]{LTpointwise}  (see also \cite{Haj03,HajNew}) considering the function $\rho(u(\,\cdot\,),y)$ for some suitable $y\in\YY$.  Instead of concluding with the same claim of  \cite[Theorem 3.2]{LTpointwise}, it will be enough for us to prove the second inequality of  \cite[Equation (3.4)]{LTpointwise}. Choose $k_0\in\ZZ$ as in the proof of \cite[Theorem 3.2]{LTpointwise}.  We choose $y\in\YY$ such that $\essinf_{E_{k_0}}\rho(u(\,\cdot\,),y)=0$, where {
    $$E_{k_0}\defeq \{x'\in B_{r}(x):\mathcal M(\mu\mres{3\sigma B})(x')\le 2^{k_{0}}\}.$$
    }
    We show now why this is possible. 	By the proof of \cite[Theorem 3.2]{LTpointwise}, we know that $\mass(E_{k_0})>0$ and $u$ is $C(R)2^{k_0+1}$-Lipschitz on $E_{k_0}\cap A$. Fix $\epsilon>0$; for every $\rho \le \frac{\epsilon}{C(R)2^{k_0+1}}$ we thus have
    \[
    \{ x' \in E_{k_0} \colon \rho(u(z),u(x'))\le \epsilon \} \cap B_{\rho}(z)\supseteq E_{k_0} \cap B_{\rho}(z),\qquad \mass\text{-a.e.\ }z \in E_{k_0}.   
    \]
    From this and the Lebesgue differentiation theorem, we directly deduce that
    \[
    \lim_{\rho\searrow 0} \frac{\mass\big(  \{ x' \in E_{k_0} \colon \rho(u(z),u(x'))\le \epsilon \} \cap \mass(B_{\rho}(z)\big)}{\mass(B_{\rho}(z))} =1, \qquad \mass\text{-a.e.\ } z\in E_{k_0}.
    \]
    Thus, by definition, $\mass$-a.e.\ point in $E_{k_0}$ is of approximate continuity for $u$. In particular, there is an approximate value $y = \tilde u(x)$ for suitable $x \in E_{k_0}$ such that $\essinf_{E_{k_0}} \rho(u(\,\cdot\,),y)=0$. Now we can continue by following the proof of \cite[Theorem 3.2]{LTpointwise}: in the arguments, we use for this setting \eqref{eq:doubling}-\eqref{eq:maximal operator BV} instead of the corresponding global versions and carry a constant $C(R)$ which might depend here on ${\sf Doub}(R)$.
\medskip\\\textbf{Implication} iii) $\Rightarrow$ i). We need to show that, for any $1$-Lipschitz function $\varphi \in \LIP(\YY)$ with $\varphi(\bar y)=0$, $\varphi \circ u \in \BV(\XX)$ and $|\DIFF(\varphi \circ u)| \le C \nu $. This follows by a standard application of a \emph{discrete convolution} technique (see, e.g., the proof of \cite[Proposition 4.1]{KinnunenKorteShanmugaligam14} for BV-functions) which, in the present note, can be performed building a partition of unity subjected to the covering of Lemma \ref{lem:covering}. We omit the details and instead notice the only needed straightforward modification with respect to the classical argument: while carrying on with the usual proof, to bound the difference of average values of $\varphi \circ u$ on two close-by balls $B_i,B_j$ of the covering, use Lemma \ref{threedottwo} for $c = \varphi(y_i)$ for the point $y_i \in \YY$ given by assumption iii) on $B_i$. Then, estimate $|\varphi \circ u-\varphi(y_i)|\le \rho(u,y_i)$ and use the assumption \eqref{fakepoincare}.
\medskip\\\textbf{Last statement}. From the first implication, we see that whenever $u \in \BV(\XX,\YY_{\bar y})$, then \eqref{eq:maximal estimate} holds with $\mu = |\DIFF u|$, $\sigma = 2\tau_p$ and $C(R)$ a constant depending only on the PI-parameters. 
	
	From the second implication, we therefore get that  $u \in \BV(\XX,\YY_{\bar y})$ implies that $\nu = |\DIFF u|$ is admissible, with $\lambda = 3\sigma = 6\tau_p$ and $C(R)$ again with the same dependence as in ii). 
    \end{proof}
    \begin{rem}
       {
       Let us comment on property iii). If $\YY$ is a Banach space, one of the equivalent definitions of a BV-function considered in \cite{MirandaJr03} is the following:
            the Poincar\'e inequality  as stated in \eqref{fakepoincare} holds with $y=\dashint_{B_r(x)}u\,\dd\mass$.
        We thus obtain the compatibility between the definitions of the space $\BV$ considered in this note and the notions of the space $\BV$ in  \cite{MirandaJr03}, for Banach targets.
       }\fr  
        \end{rem}
\begin{cor}[Lusin density of Lipschitz maps]
Let \((\XX,\dist,\mass)\) be a PI space and let \((\YY,\rho,\bar y)\) be a pointed separable metric space. Let \(u\in\BV(\XX,\YY_{\bar y})\) be given.
Then for every \(B\subseteq\XX\) bounded Borel and \(\varepsilon>0\) there exists \(E_\varepsilon\subseteq B\) Borel
s.t. \(\mass(B\setminus E_\varepsilon)\leq\varepsilon\) and \(u|_{E_\varepsilon}\colon E_\varepsilon\to\YY\) is Lipschitz.
\end{cor}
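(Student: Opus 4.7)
The plan is to run the standard Haj{\l}asz-type extraction argument using the pointwise maximal estimate proved in Proposition \ref{thm:Equivalent BVmaps PI}. More precisely, since \(B\subseteq\XX\) is bounded we may fix \(R>0\) larger than the diameter of \(B\); let then \(A\subseteq\XX\), \(\sigma\geq 1\) and \(C(R)>0\) be as in item ii) of Proposition \ref{thm:Equivalent BVmaps PI} applied with \(\mu=|\DIFF u|\), so that
\[
\rho(u(x),u(y))\leq C(R)\dist(x,y)\bigl(\mathcal{M}_{\sigma R}(|\DIFF u|)(x)+\mathcal{M}_{\sigma R}(|\DIFF u|)(y)\bigr)
\]
for all \(x,y\in A\cap B\).

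Next, I would use the weak-\(L^1\) estimate \eqref{eq:maximal operator BV} for the maximal operator applied to the finite Borel measure \(|\DIFF u|\). Fix a ball \(B_0\) containing \(B\); then for every \(\lambda>0\) the set
\[
G_\lambda\coloneqq\bigl\{x\in B_0\colon\mathcal{M}_{\sigma R}(|\DIFF u|)(x)>\lambda\bigr\}
\]
satisfies \(\mass(G_\lambda)\leq C'(R)|\DIFF u|(B_0)/\lambda\) (using a standard covering argument if necessary, given the localized form of \eqref{eq:maximal operator BV}). In particular, choosing \(\lambda_\varepsilon\) large enough we can arrange \(\mass(B\cap G_{\lambda_\varepsilon})\leq\varepsilon\). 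The set
\[
E_\varepsilon\coloneqq A\cap B\cap\bigl\{x\in\XX\colon\mathcal{M}_{\sigma R}(|\DIFF u|)(x)\leq\lambda_\varepsilon\bigr\}
\]
is Borel (the maximal function is Borel, being a supremum over rationals of Borel functions), and it satisfies \(\mass(B\setminus E_\varepsilon)\leq\varepsilon\) because \(A\) has full \(\mass\)-measure.

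Finally, I would check that \(u|_{E_\varepsilon}\) is Lipschitz: for any \(x,y\in E_\varepsilon\) we have \(\dist(x,y)\leq R\) by the choice of \(R\), hence the maximal estimate gives
\[
\rho(u(x),u(y))\leq 2C(R)\lambda_\varepsilon\dist(x,y),
\]
so that \(u|_{E_\varepsilon}\) is \(2C(R)\lambda_\varepsilon\)-Lipschitz. I do not anticipate a real obstacle here; the only minor point to spell out carefully is the Borel measurability of the restricted maximal function and the standard passage from \eqref{eq:maximal operator BV} on balls to the bounded Borel set \(B\), both of which are routine on PI spaces.
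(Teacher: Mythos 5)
Your proof is correct and takes essentially the same route as the paper's: both apply the pointwise maximal estimate from item ii) of Proposition \ref{thm:Equivalent BVmaps PI} with $\mu = |\DIFF u|$, both replace $\mathcal{M}_{\sigma\dist(x,y)}$ by the larger $\mathcal{M}_{\sigma R}$ (resp.\ $\mathcal{M}_{2R\sigma}$ in the paper) using that $B$ is bounded, both control the superlevel set of the maximal function via the weak-$L^1$ bound \eqref{eq:maximal operator BV}, and both take the same $E_\varepsilon$ (the good set intersected with the full-measure set $A$). The only differences are cosmetic choices of radius ($R$ larger than $\mathrm{diam}(B)$ vs.\ $B \subseteq B_R(p)$), which do not affect the argument.
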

\begin{proof}
Fix any \(p\in\XX\) and pick \(R>0\) so that \(B\subseteq B_R(p)\). By virtue of Proposition \ref{thm:Equivalent BVmaps PI} ii),
\begin{equation}\label{eq:Lusin-Lip_aux}
\frac{\rho(u(x),u(y))}{\dist(x,y)}\leq C(2R)\big(\mathcal M_{2R\sigma}(|\DIFF u|)(x)+\mathcal M_{2R\sigma}(|\DIFF u|)(y)\big)
\quad \begin{array}{c}\forall x, y\in B_R(p)\setminus N,\\
x\neq y,
\end{array}
\end{equation}
for some constant \(\sigma\geq 1\) and some Borel set \(N\subseteq B_R(p)\) with \(\mass(N)=0\). Choosing \(\lambda>0\)
so that { $C(R,\sigma)\frac{|\DIFF u|(\XX)}{\lambda}\leq\varepsilon$ for a suitable $C(R,\sigma)>0$}, we know from \eqref{eq:maximal operator BV} that
\(E_\varepsilon\coloneqq\big\{x\in B_R(p)\setminus N\,\big|\,\mathcal M_{2R\sigma}(|\DIFF u|)(x)\leq\lambda\big\}\) satisfies
\(\mass(B\setminus E_\varepsilon)\leq\mass(B_R(p)\setminus E_\varepsilon)\leq\varepsilon\). Moreover, we deduce
from \eqref{eq:Lusin-Lip_aux} that
\[
\frac{\rho(u(x),u(y))}{\dist(x,y)}\leq 2 C(2R)\lambda\quad\text{ for every }x,y\in E_\varepsilon,
\]
which shows that \(u|_{E_\varepsilon}\) is \(2 C(2R)\lambda\)-Lipschitz. Consequently, the statement is achieved.
\end{proof}

We now start proving the required equivalence results to ultimately prove our main result Theorem \ref{thm:main result 2}.

    The following result follows at once from the lower semicontinuity of the total variation of real valued functions of bounded variation.
    \begin{prop}
    Let $(\XX,\dist,\mass)$ be a metric measure space and let $(\YY,\rho,\bar y)$ be a pointed separable metric space. Then, $\BV^*(\XX,\YY_{\bar y})\subseteq\BV(\XX,\YY_{\bar y})$ and it holds as measures
    \[
         |\DIFF u|\le V_u\qquad\text{for every }u\in \BV^*(\XX,\YY_{\bar y}).
    \]
    \end{prop}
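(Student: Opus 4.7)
The plan is to exploit the definition of $V_u$ as a relaxation with locally simple competitors, together with the $L^1_{\rm loc}$-lower semicontinuity of the total variation of \emph{real-valued} functions. Fix $u\in \BV^*(\XX,\YY_{\bar y})$, so by definition $u\in L^1(\XX,\YY_{\bar y})$ and $V_u(\XX)<\infty$. The goal is to prove that for every $1$-Lipschitz $\varphi\colon\YY\to\RR$ with $\varphi(\bar y)=0$, and every open $U\subseteq\XX$,
\[
|\DIFF(\varphi\circ u)|(U)\le V_u(U),
\]
and then to invoke the minimality of $|\DIFF u|$ from Definition \ref{def:Du Resh}.

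To achieve this, I would fix such $\varphi$ and $U$, and take an approximating sequence $(u_n)\subseteq\SS_{\rm loc}(U,\YY)$ with $\rho(u_n,u)\to 0$ in $L^1_{\rm loc}(U)$ achieving the infimum defining $V_u(U)$. Because $\varphi$ is $1$-Lipschitz, $|\varphi\circ u_n-\varphi\circ u|\le \rho(u_n,u)$, hence $\varphi\circ u_n\to \varphi\circ u$ in $L^1_{\rm loc}(U)$. Moreover, by Definition \ref{def:Du Resh} applied to $u_n$ (which has finite total variation whenever $|\DIFF u_n|(U)$ is finite, the only nontrivial case), we have $|\DIFF(\varphi\circ u_n)|\le |\DIFF u_n|$ as Borel measures. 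By the lower semicontinuity of the total variation recalled after Definition \ref{def:Du Resh},
\[
|\DIFF(\varphi\circ u)|(U)\le \liminf_{n\to\infty}|\DIFF(\varphi\circ u_n)|(U)\le \liminf_{n\to\infty}|\DIFF u_n|(U),
\]
and minimizing over the competitors gives $|\DIFF(\varphi\circ u)|(U)\le V_u(U)$, as desired.

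Finally, to pass from the open-set inequality to an inequality of Borel measures, I would use that the Borel measure $|\DIFF(\varphi\circ u)|$ is itself obtained by Carath\'eodory extension from its open-set values; since $V_u$ is monotone on open sets, the inequality persists through the extension. As this bound holds uniformly in $\varphi$, the minimality of $|\DIFF u|$ in Definition \ref{def:Du Resh} yields $|\DIFF u|\le V_u$ as measures. In particular $|\DIFF u|(\XX)\le V_u(\XX)<\infty$, so $u\in \BV(\XX,\YY_{\bar y})$, establishing the asserted inclusion. No genuine obstacle is expected: the argument is a standard relaxation-versus-lower-semicontinuity comparison, and the only mild point is the routine measure-theoretic transfer from open sets to Borel sets, which does not require any regularity of $\XX$.
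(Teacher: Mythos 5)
Your core step — combining the pointwise bound $|\DIFF(\varphi\circ u_n)|\le|\DIFF u_n|$ for every $1$-Lipschitz $\varphi$ and every locally simple competitor $u_n$ with the $L^1_{\rm loc}$-lower semicontinuity of the real-valued total variation — is exactly what the paper invokes (its own proof consists of a one-line remark to this effect), and it correctly yields $|\DIFF(\varphi\circ u)|(U)\le V_u(U)$ for every admissible $\varphi$ and every open $U$.

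The gap is in the final step, and it is not the ``routine measure-theoretic transfer'' you claim. Membership $u\in\BV(\XX,\YY_{\bar y})$ in Definition~\ref{def:Du Resh} requires the \emph{existence} of a single finite Borel measure $\mu$ dominating $|\DIFF(\varphi\circ u)|$ simultaneously for all admissible $\varphi$, and $|\DIFF u|$ is \emph{defined} as the minimal such $\mu$; so invoking ``minimality of $|\DIFF u|$'' presupposes what you must prove. Your proposed dominant is the Carath\'eodory extension $\alpha$ of $V_u$, but on a general metric measure space $\alpha$ is not known to be a Borel measure — that is precisely Theorem~\ref{thm:V is a measure}, whose proof needs the joint property lemma and the PI assumption. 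And the Borel-set inequalities $|\DIFF(\varphi\circ u)|(B)\le\alpha(B)$ do not by themselves control the lattice join $\bigvee_\varphi|\DIFF(\varphi\circ u)|$, because evaluating the join involves sums over Borel partitions, against which $\alpha$ has no superadditivity. To close the gap without PI you can argue as follows: by Proposition~\ref{lem:family F isotropic} it is enough to dominate $|\DIFF(\varphi\circ u)|$ for a \emph{countable} family $\mathcal F$, so set $\mu\coloneqq\bigvee_{\varphi\in\mathcal F}|\DIFF(\varphi\circ u)|$. Given finitely many pairwise disjoint Borel sets $B_1,\dots,B_N\subseteq U$ and $\varphi_1,\dots,\varphi_N\in\mathcal F$, use inner regularity of the finite Borel measures $|\DIFF(\varphi_i\circ u)|$ to pick compacts $K_i\subseteq B_i$; since the $K_i$ have pairwise positive distance and lie in $U$, choose pairwise disjoint open $U_i\subseteq U$ with $K_i\subseteq U_i$; then your open-set bound and the (easy, Fatou-based) superadditivity of $V_u$ on disjoint open subsets give $\sum_i|\DIFF(\varphi_i\circ u)|(K_i)\le\sum_i V_u(U_i)\le V_u(U)$. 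Letting $K_i\nearrow B_i$, then $N\to\infty$, then taking the supremum over partitions of $U$ yields $\mu(U)\le V_u(U)<\infty$, hence $u\in\BV(\XX,\YY_{\bar y})$ and $|\DIFF u|=\mu\le V_u$ on open sets, which is the asserted inequality of measures.
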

\begin{thm}
	Let $(\XX,\dist,\mass)$ be a PI space and let $(\YY,\rho,\bar y)$ be a pointed separable metric space. 
	Then, $ \BV(\XX,\YY_{\bar y})\subseteq \BV^*(\XX,\YY_{\bar y})$ and
	$$
	  V_u\le C |\DIFF u|\qquad\text{for every }u\in \BV(\XX,\YY_{\bar y}),
	$$
 as measures, 
	for some constant $C>0$ depending only on the PI-parameters.
\end{thm}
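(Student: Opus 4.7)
The plan is to produce, for each large integer $h$, a locally simple approximation $u_h$ of $u$ obtained by discrete convolution on a fine covering of $\XX$, and to bound $|\DIFF u_h|$ in terms of $|\DIFF u|$ by combining the Poincar\'e-type characterization iii) of Proposition \ref{thm:Equivalent BVmaps PI} with the sharp simple-map estimate of Lemma \ref{cdscascs}. A uniform global bound will yield $V_u(\XX)\le C|\DIFF u|(\XX)<\infty$, so that Theorem \ref{thm:V is a measure} applies; localizing the same construction will then promote the estimate to the measure-theoretic inequality $V_u\le C|\DIFF u|$.

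For each $h\ge 3$, I would invoke Lemma \ref{lem:covering} with dilation $\Lambda:=6\tau_p$ to produce a locally finite cover $\{B_i^h=B_{r_i^h}(x_i^h)\}_i$ of $\XX$ with radii $r_i^h<1/h$, bounded overlap of $\Lambda B_i^h$, the perimeter bound $r_i^h\,\per(B_i^h,\XX)\le C\mass(B_i^h)$, and $\HH^h(\partial B_i^h\cap\partial B_j^h)=0$ for $i\neq j$. Proposition \ref{thm:Equivalent BVmaps PI} iii) (applied with $\nu=|\DIFF u|$, $\lambda=6\tau_p$) then provides for every $i$ a point $y_i^h\in\YY$ with
\[
\int_{B_i^h}\rho(u,y_i^h)\,\dd\mass\le Cr_i^h\,|\DIFF u|(\Lambda B_i^h).
\]
Defining $E_i^h:=B_i^h\setminus\bigcup_{j<i}B_j^h$ and $u_h:=\sum_i y_i^h\nchi_{E_i^h}\in\SS_{\rm loc}(\XX,\YY)$, the convergence $u_h\to u$ in $L^1_{\rm loc}$ will follow at once by summing the display over balls meeting any bounded open $U$ and invoking bounded overlap, giving $\int_U\rho(u_h,u)\,\dd\mass\le (C/h)|\DIFF u|(U^{\Lambda/h})\to 0$.

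The heart of the argument is the estimate $|\DIFF u_h|(\Omega')\le C|\DIFF u|(\Omega'^{\Lambda/h})$ on bounded open $\Omega'$. Since the cover is locally finite, $u_h$ is simple on $\Omega'$ and Lemma \ref{cdscascs} yields
\[
|\DIFF u_h|(\Omega')\le C\sum_{i\neq j}\rho(y_i^h,y_j^h)\,\HH^h\big(\Omega'\cap\partial^*E_i^h\cap\partial^*E_j^h\big).
\]
Since $\partial^*E_i^h\subseteq\bar B_i^h$, the summand vanishes unless $\bar B_i^h\cap\bar B_j^h\ne\varnothing$; for such a neighbouring pair, property 3 of the covering gives $r_j^h\le 4r_i^h$, so $\tilde B_i:=B_{5r_i^h}(x_i^h)$ contains both $B_i^h$ and $B_j^h$. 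Applying Proposition \ref{thm:Equivalent BVmaps PI} iii) on $\tilde B_i$ to get an auxiliary base point $\tilde y_i$, then averaging the inequality $\rho(y_i^h,y_j^h)\le\rho(u,y_i^h)+\rho(u,\tilde y_i)+\rho(u,y_j^h)+\rho(u,\tilde y_i)$ over $B_i^h$ and $B_j^h$, and using doubling to compare $\mass(B_i^h)\approx\mass(B_j^h)\approx\mass(\tilde B_i)$, will give
\[
\rho(y_i^h,y_j^h)\le\frac{Cr_i^h}{\mass(B_i^h)}\,|\DIFF u|(\lambda\tilde B_i).
\]
On the other hand, from $\partial^*E_i^h\subseteq\partial B_i^h\cup\bigcup_{k<i}\partial B_k^h$, the representation Theorem \ref{thm:repr_per} and property 4 of the covering,
\[
\HH^h(\partial^*E_i^h\cap\partial^*E_j^h)\le\gamma^{-1}\per(E_i^h,\XX)\le C\mass(B_i^h)/r_i^h,
\]
the constant absorbing the bounded number of $B_k^h$'s overlapping $B_i^h$. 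Multiplying and summing, using Remark \ref{rem:S max} to cap the number of neighbours per index and bounded overlap of $\lambda\tilde B_i$, the total will collapse into $C|\DIFF u|(\Omega'^{\Lambda/h})$.

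The main obstacle is this geometry-measure coupling: cleanly controlling $\rho(y_i^h,y_j^h)$ via an enclosing ball and $\HH^h(\partial^*E_i^h\cap\partial^*E_j^h)$ via the covering's perimeter bound, with universal constants. Once the global bound is in hand, the definition of $V_u$ and lower semicontinuity give $V_u(\XX)\le C|\DIFF u|(\XX)<\infty$, so Theorem \ref{thm:V is a measure} promotes $V_u$ to a finite Borel measure. To upgrade to the full measure inequality: for any open $U'\Subset U$ and $h$ large enough that $\Lambda/h<\dist(U',\XX\setminus U)$, the localized construction yields $|\DIFF u_h|(U')\le C|\DIFF u|(U)$, hence $V_u(U')\le C|\DIFF u|(U)$; letting $U'\nearrow U$ and invoking inner regularity of the measure $V_u$ gives $V_u(U)\le C|\DIFF u|(U)$ for every open $U$, and therefore as measures.
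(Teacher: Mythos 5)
Your approach is essentially the same as the paper's: build a fine Whitney-type covering from Lemma \ref{lem:covering}, choose base points $y_i$ from the Poincar\'e-type characterization iii) of Proposition \ref{thm:Equivalent BVmaps PI}, form the discretized map $u_h=\sum_i y_i\nchi_{E_i}$ on the associated partition, estimate $|\DIFF u_h|$ by combining Lemma \ref{cdscascs} with perimeter bounds for the covering balls, and conclude via lower semicontinuity and Theorem \ref{thm:V is a measure}. Two technical steps are handled a bit differently but legitimately: to control $\rho(y_i,y_j)$ for neighbouring balls you introduce a third auxiliary value $\tilde y_i$ from the enclosing ball $\tilde B_i=5B_i$, whereas the paper compares averages directly via Lemma \ref{threedottwo}; and you bound $\HH^h(\partial^*E_i\cap\partial^*E_j)$ crudely by $\gamma^{-1}\per(E_i,\XX)$ absorbing the bounded number of overlapping balls into the constant, whereas the paper first observes (using property 5 of the covering) that $\HH^h$-a.e.\ point of $\partial^*E_i\cap\partial^*E_j$ with $i<j$ lies on $\partial^*B_i$ and then only uses $\per(B_i)$.

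The one genuine slip is the choice $\Lambda:=6\tau_p$ in Lemma \ref{lem:covering}. Your estimate for $\rho(y_i,y_j)$ brings in $|\DIFF u|(\lambda\tilde B_i)=|\DIFF u|(30\tau_p\,B_i)$, so the final summation $\sum_i|\DIFF u|(30\tau_p\,B_i)\le C\,|\DIFF u|(\,\cdot\,)$ requires bounded overlap of $\{30\tau_p\,B_i\}$, but the covering only guarantees bounded overlap (and containment in the ambient open set) for $\{\Lambda B_i\}=\{6\tau_p\,B_i\}$. You should simply take $\Lambda\ge 30\tau_p$ from the start; the paper takes $\Lambda=15\lambda\vee 1=90\tau_p\vee 1$ because it applies Poincar\'e on $15B_i$. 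One further cosmetic point: the paper runs the construction directly inside each open set $\Omega$, so $V_u(\Omega)\le C|\DIFF u|(\Omega)$ comes out immediately for every open $\Omega$ with no detour through inner regularity, though your $U'\Subset U$ argument is also correct.
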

\begin{proof}
It is enough to fix an arbitrary $\Omega\subseteq\XX$ open and build a sequence $(u_h)_h\subseteq \mathcal S_{\rm loc}(\Omega,\YY)$ with $f_h\rightarrow f$ in $L^1_{\rm loc}(\Omega,\YY)$ and
$$
\liminf_{h \nearrow \infty} |\DIFF u_h|(\Omega)\le C |\DIFF u|(\Omega)
$$
for some constant $C$ depending only on the PI-parameters. In what follows, possibly bigger constants $C>0$ with the same dependence might appear without being relabeled. We fix $h\ge 3$ and consider the Whitney type covering $\{B_i\}$ of $\Omega$, given by Lemma \ref{lem:covering} with $\Lambda=15\lambda\vee 1 $ for $\lambda= 6\tau_p$ (the reason for this choice will be clear in the arguments), to construct a competitor $u_h$. By Proposition \ref{thm:Equivalent BVmaps PI}, take, for every $i$, $y_i\in\YY$ such that 
\begin{equation}\label{csadmk}
	\int_{15B_i}\rho(u,y_i)\dd\mass\le C r_i|\DIFF u|( \Lambda B_i).
\end{equation}
We define $$u_h=\sum_i y_i \chi_{B_i\setminus \bigcup_{j<i} B_j},$$
where we recall that the covering $\{B_i\}_i$ depends on $h$, even though we do not make this dependence explicit.
Now we estimate, using  the fact that $\{B_i\}_i$ is a covering of $\Omega$ (property $2$ of the covering) for the first inequality, and \eqref{csadmk}  for the last inequality,
$$
\int_\Omega \rho(u,u_h)\dd\mass\le \sum_i \int_{B_i\setminus \bigcup_{j<i} B_j} \rho(u,y_i)\dd\mass\le \sum_i \int_{B_i} \rho(u,y_i)\dd\mass\le C  \sum_i r_i|\DIFF u|(\Lambda B_i).
$$
Recalling properties $1$ and $2$ of the covering we conclude that  
$$\int_\Omega \rho(u,u_h)\dd\mass\le h^{-1}C |\DIFF u|(\Omega).$$

Now we set $D_i\defeq B_i\setminus\bigcup_{j<i} B_j$, notice that $u_h=\sum_i y_i \chi_{D_i}$. We can use Lemma \ref{cdscascs}  and the fact that the covering $(B_i)_i$ is locally finite to deduce that $$|\DIFF u_h|\le C\sum_{i<j}\rho(y_i,y_j)\HH^h\mres(\partial^* D_i\cap\partial^* D_j).$$
As \begin{equation}\label{cscsda}
    \partial^* D_i\subseteq\bigcup_{p\le i}\partial^* B_p,
\end{equation}
we have \begin{equation}\label{csdsacs}
    |\DIFF u_h|\le C\sum_h \sum_{i<j}\rho(y_i,y_j)\HH^h\mres(\partial^* D_i\cap\partial^* D_j\cap\partial^* B_h).
\end{equation}
Now fix for  the moment $h$ and consider $H_{i,j,h}\defeq\partial^* D_i\cap\partial^* D_j\cap\partial^* B_h$.
We claim that $\HH^h(H_{i,j,h})=0$ unless $h=i<j$. Indeed, if $h>i$, $\HH^h(H_{i,j,h})=0$ by property $5$ of the covering and \eqref{cscsda}.
Now, if $h<i$, $\HH^h$-a.e.\ $x\in H_{i,j,h}$ satisfies $x\notin\partial B_i$. For such $x$, $x\notin \XX\setminus \bar B_i$, as $x\in\partial^* D_i$ but at the same time  $x\notin  B_i$, as $x\in\partial^* D_j$ ($j>i$), whence the claim.
Hence \eqref{csdsacs} reads 
$$
    |\DIFF u_h|\le C \sum_{i<j}\rho(y_i,y_j)\HH^h\mres(\partial^* D_i\cap\partial^* D_j\cap\partial^* B_i).
$$
Now, if $\bar B_i\cap\bar B_j=\varnothing$, then $\partial^* D_i\cap\partial^* D_j=\varnothing$, hence we can write 
\begin{equation}
\begin{aligned}
        |\DIFF u_h|&\le C \sum_{i}\sum_{j:\bar B_i\cap \bar B_j\ne\varnothing}\rho(y_i,y_j)\HH^h\mres(\partial^* D_i\cap\partial^* D_j\cap\partial^* B_i),
        \\&\le C\sum_{i}\sum_{j:\bar B_i\cap \bar B_j\ne\varnothing}\rho(y_i,y_j)\HH^h\mres \partial^* B_i
        \le C\sum_{i}\sum_{j:\bar B_i\cap \bar B_j\ne\varnothing}\rho(y_i,y_j)\per(B_i,\XX)\\&\le  C\sum_{i}\sum_{j:\bar B_i\cap \bar B_j\ne\varnothing}\rho(y_i,y_j)\frac{\mass(B_i)}{r_i}, 
\end{aligned} \label{eq:stim uh}
\end{equation}
where the last inequality is due to property $4$ of the covering. 
Now, consider $B_i$ and $B_j$ such that $\bar B_i\cap \bar B_j\ne \varnothing$. By property $3$ of the covering, $r_i\le 4 r_j$ and $r_j\le 4 r_i$, notice also that $\dist(x_i,x_j)\le r_i+r_j$. Then, we can invoke Lemma \ref{threedottwo} with $\delta/3 = r_i+r_j, r=r_j,s=r_i$, $d = 15$  to deduce
\begin{equation}\label{csanjocsno}
\begin{aligned}
\abs{\dashint_{B_j}\rho(u,y_j)\dd\mass-\dashint_{B_i}\rho(u,y_j)\dd\mass}&\le  \frac{C}{\mass(B_j)}   \int_{B_{3(r_i+r_j)}(x_j)}  \rho(u,y_j)\,\dd\mass, \\
&\le   \frac{C r_j}{\mass(B_j)}   \int_{15B_j}  \rho(u,y_j)\,\dd\mass  \\
&\overset{\eqref{csadmk}}{\le} \frac{C r_j}{\mass(B_j)}|\DIFF u|(\Lambda B_j).
\end{aligned}
\end{equation} 
Now we estimate, for $\bar B_i\cap \bar B_j\ne\varnothing$, using \eqref{csadmk}  and \eqref{csanjocsno}, as following
\begin{align*}
\rho(y_i,y_j)&=\dashint_{B_i}\rho(y_i,y_j)\dd\mass\\&\le \dashint_{B_i}\rho(y_i,u)\dd\mass+\dashint_{B_i}\rho(u,y_j)\dd\mass - \dashint_{B_j}\rho(u,y_j)\dd\mass+\dashint_{B_j}\rho(u,y_j)\dd\mass
\\&\le C \frac{r_i}{\mass(B_i)}|\DIFF u|(\Lambda B_i) +C \frac{r_j}{\mass(B_j)}\Big(|\DIFF u|(\Lambda B_i)+|\DIFF u|(\Lambda B_j) \Big) \\
&\le  C \frac{r_i}{\mass(B_i)} \Big(|\DIFF u|(\Lambda B_i) + |\DIFF u|(\Lambda B_j)\Big) 
\end{align*}
having used, in the last inequality, the fact that $r_j \le 4r_i, r_i\le 4_j$ and a doubling estimate. Finally, from \eqref{eq:stim uh}, we reach
\begin{align*}
	|\DIFF u_h|(\Omega)&\le C\sum_{i}\sum_{j:\bar B_i\cap \bar B_j\ne\varnothing} \Big(|\DIFF u|(\Lambda B_i) + |\DIFF u|(\Lambda B_j)\Big)
	\\&\le C\Big(\sum_i \sum_{j:\bar B_i\cap \bar B_j\ne\varnothing} |\DIFF u|(\Lambda  B_i)+ \sum_j \sum_{i:\bar B_i\cap\bar  B_j\ne\varnothing} |\DIFF u|( \Lambda B_j)\Big) \le C|\DIFF f|(\Omega),
\end{align*}
where we used property $2$  of the covering. 
\end{proof}
\begin{prop}
     Let $(\XX,\dist,\mass)$ be a metric measure space and let   $(\YY,\rho,\bar y)$ be a pointed separable metric space.  
     Then, $\BV_w(\XX,\YY_{\bar y}) \subseteq \BV(\XX,\YY_{\bar y})$ and it holds as measures
     $$
     |\DIFF u|\le |\DIFF u|_w\qquad\text{for every }u\in \BV_w(\XX,\YY_{\bar y}).
     $$
\end{prop}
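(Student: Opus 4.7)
The strategy is to reduce to the real-valued case, where the equivalence between the test-plan notion and Miranda's relaxation notion of BV is known for arbitrary metric measure spaces thanks to \cite{AmbrosioDiMarino14}. Concretely, fix a $1$-Lipschitz $\varphi \colon \YY \to \RR$ with $\varphi(\bar y)=0$, and set $f \coloneqq \varphi \circ u$. Then $|f| \le \rho(u,\bar y) \in L^1(\XX)$, so $f \in L^1(\XX)$, and it suffices to show that $f \in \BV(\XX)$ with $|\DIFF f| \le |\DIFF u|_w$ as measures: varying $\varphi$, Definition \ref{def:Du Resh} then yields $u \in \BV(\XX,\YY_{\bar y})$ with the asserted inequality.

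Given an arbitrary $\infty$-test plan $\pi$ on $\XX$, property i) in Definition \ref{def:Du weak} ensures $u \circ \gamma \in \BV([0,1],\YY)$ for $\pi$-a.e.\ $\gamma$. Applying the post-composition characterization \eqref{eq:BV curve postcomp} with the $1$-Lipschitz function $\varphi$, we obtain $f\circ\gamma \in \BV([0,1])$ together with $|\DIFF(f\circ \gamma)| \le |\DIFF(u\circ \gamma)|$ as Borel measures on $[0,1]$, for $\pi$-a.e.\ $\gamma$. Pushing forward by $\gamma$ and integrating against $\pi$, property ii) of Definition \ref{def:Du weak} gives
\[ \int \gamma_* |\DIFF(f \circ \gamma)|(B) \, \dd \pi(\gamma) \le \int \gamma_* |\DIFF (u \circ \gamma)|(B) \, \dd \pi(\gamma) \le {\mathrm{Comp}}(\pi){\mathrm{Lip}}(\pi) |\DIFF u|_w(B), \]
for every Borel $B \subseteq \XX$. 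This precisely places $f$ in the weak-BV class of \cite{AmbrosioDiMarino14} with candidate measure $|\DIFF u|_w$.

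Invoking the equivalence of \cite{AmbrosioDiMarino14} between the test-plan notion and Miranda's relaxation notion of real-valued BV, together with the identification of the corresponding total variation measures, we conclude that $f \in \BV(\XX)$ with $|\DIFF f| \le |\DIFF u|_w$, which finishes the proof. There is no serious obstacle: the inclusion $\BV_w \subseteq \BV$ reduces formally to \eqref{eq:BV curve postcomp} plus the real-valued equivalence result, which are precisely the two ingredients that make the post-composition definition and the test-plan definition interact in a clean way.
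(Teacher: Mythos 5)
Your proposal is correct and follows essentially the same route as the paper: fix a $1$-Lipschitz $\varphi$ with $\varphi(\bar y)=0$, use \eqref{eq:BV curve postcomp} curve-by-curve to pass from $u\circ\gamma$ to $\varphi\circ u\circ\gamma$, integrate against an arbitrary $\infty$-test plan using property ii) of Definition \ref{def:Du weak}, and then invoke the real-valued equivalence result of \cite{AmbrosioDiMarino14} to conclude that $\varphi\circ u\in\BV(\XX)$ with $|\DIFF(\varphi\circ u)|\le|\DIFF u|_w$, which by arbitrariness of $\varphi$ gives the claim.
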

\begin{proof}
If $u \in \BV_w(\XX,\YY_{\bar y})$, then by definition for every $\infty$-test plan $\pi$ and for every $1$-Lipschitz function $\varphi:\YY\rightarrow\RR$ with $\varphi(\bar y)=0$, we have (recalling also \eqref{eq:BV curve postcomp}) that $\varphi \circ u \circ \gamma \in \BV([0,1])$ for $\pi$-a.e.\ $\gamma$ and  we can estimate
	\[
	    \int \gamma_* |\DIFF ( \varphi \circ u \circ \gamma  )|\,\dd\pi \le \int \gamma_* |\DIFF (  u \circ \gamma  )|\,\dd\pi \le  {\rm Comp}(\pi){\rm Lip}(\pi)|\DIFF u|_w.
	\]
   By the equivalent characterization of \cite{AmbrosioDiMarino14}  we get $\varphi \circ u \in \BV(\XX)$ and $|\DIFF (\varphi \circ u)|\le |\DIFF u|_w$. By the arbitrariness of $\varphi$ we conclude.
\end{proof}
\begin{thm}
    Let $(\XX,\dist,\mass)$ be a PI space and let $(\YY,\rho,\bar y)$ be a pointed separable metric space. Then  $\BV^*(\XX,\YY_{\bar y}) \subseteq \BV_w(\XX,\YY_{\bar y}) $ and
    \[
       |\DIFF u|_w \le C V_u\qquad\text{for every }u \in  \BV^*(\XX,\YY_{\bar y}),
    \]
    as measures,
    for some constant $C>0$ depending only on the PI-parameters.
\end{thm}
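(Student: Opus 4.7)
The strategy is to first prove the desired estimate $|\DIFF v|_w\le C|\DIFF v|$ for locally simple maps $v$, and then pass to the limit using the approximation scheme defining $\BV^*$.

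For the simple-map case, let $v=\sum_i y_i\chi_{E_i}$ be locally simple with $|\DIFF v|(\XX)<\infty$, so each $E_i$ is of finite perimeter by Remark \ref{rem:simple map bv}. Along a Lipschitz curve $\gamma$, $v\circ\gamma$ is a step function whose variation is concentrated on jump times. I would decompose $\bigcup_i\partial^*E_i=\bigsqcup_S\Gamma_S$ as in Subsection \ref{ss:var_simple}, noting from Remark \ref{rem:S max} that only chambers with $\#S\le n_0-1$ carry positive $\HH^h$-mass, for $n_0$ depending only on the PI parameters. At each jump time $t$ of $v\circ\gamma$ with $\gamma_t\in\Gamma_S\cap B$, both indices $i(t\pm)$ of the adjacent values belong to $S$, so the amplitude $\rho(y_{i(t-)},y_{i(t+)})$ is bounded by $\max_{l,m\in S}\rho(y_l,y_m)$. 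Writing $N_{S,B}(\gamma)$ for the number of such jump times, this gives
\[
|\DIFF(v\circ\gamma)|(\gamma^{-1}(B))\le \sum_S \max_{l,m\in S}\rho(y_l,y_m)\, N_{S,B}(\gamma).
\]

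The key counting identity is $2N_{S,B}(\gamma)=\sum_{i\in S}|\DIFF(\chi_{E_i}\circ\gamma)|(\gamma^{-1}(\Gamma_S\cap B))$, since at each jump of $v\circ\gamma$ with $\gamma_t\in\Gamma_S$ the real-valued function $\chi_{E_i}\circ\gamma$ jumps precisely when $i\in\{i(t-),i(t+)\}$. Integrating against $\pi$, applying the real-valued test-plan inequality from \cite{AmbrosioDiMarino14} to each $\chi_{E_i}$, and bounding $\theta_{E_i}$ via Theorem \ref{thm:repr_per} by a PI-structural constant, I obtain $\int N_{S,B}\,\dd\pi\le C(n_0-1){\rm Comp}(\pi){\rm Lip}(\pi)\HH^h(\Gamma_S\cap B)$. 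Summing over $S$ and using the lower bound in Lemma \ref{cdscascs} in the form $\sum_S\max_{l,m\in S}\rho(y_l,y_m)\HH^h\mres\Gamma_S\le c|\DIFF v|$ yields the simple-map estimate
\[
\int\gamma_*|\DIFF(v\circ\gamma)|(B)\,\dd\pi\le C'\,{\rm Comp}(\pi){\rm Lip}(\pi)|\DIFF v|(B).
\]

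To pass to the limit, fix $u\in\BV^*(\XX,\YY_{\bar y})$ and, by Remark \ref{rem:weak BV on bounded plans}, a bounded $\infty$-test plan $\pi$ supported in some compact $K\subseteq\XX$. Given open bounded $U\supseteq K$ and any sequence $(u_n)\subseteq\SS_{\rm loc}(U,\YY)$ with $u_n\to u$ in $L^1_{\rm loc}(U)$, the bounded compression of $\pi$ gives $u_n\circ\gamma\to u\circ\gamma$ in $L^1([0,1],\YY)$ for $\pi$-a.e.\ $\gamma$, and combining \eqref{eq:BV curve lsc}, Fatou's lemma, and the simple-map estimate applied to $u_n$ produces
\[
\int\gamma_*|\DIFF(u\circ\gamma)|(U)\,\dd\pi\le C'\,{\rm Comp}(\pi){\rm Lip}(\pi)\liminf_n|\DIFF u_n|(U).
\]
Taking the infimum of the right-hand side over all admissible sequences yields $V_u(U)$ by Definition \ref{def:Vu}. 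Since both $V_u$ and $B\mapsto\int\gamma_*|\DIFF(u\circ\gamma)|(B)\,\dd\pi$ are outer regular finite Borel measures, the bound extends to arbitrary Borel $B\subseteq\XX$; the case $B=U$ also gives $\int|\DIFF(u\circ\gamma)|([0,1])\,\dd\pi<\infty$, so $u\circ\gamma\in\BV([0,1],\YY)$ for $\pi$-a.e.\ $\gamma$, and hence $|\DIFF u|_w\le C'V_u$.

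The main obstacle is obtaining a uniform structural constant in the simple-map step. A naive approach bounding $|\DIFF(v\circ\gamma)|$ by $\sum_\varphi|\DIFF(\varphi\circ v\circ\gamma)|$ over a detecting family of $1$-Lipschitz functions $\varphi\colon\YY\to\RR$ would introduce factors scaling with the number of values of $v$; the chamber decomposition of the essential boundary, combined with the multiplicity bound in Remark \ref{rem:S max}, is what absorbs this potential combinatorial explosion into a constant depending only on the PI parameters.
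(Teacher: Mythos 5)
Your proof is correct in outline but takes a genuinely different route from the paper's, and the difference is worth recording. The paper handles the simple-map step by \emph{time-discretization} of the test plan: it shows that the jump set $J_{u\circ\gamma}$ is jointly measurable, chooses a partition $\{t_i^\epsilon\}$ so that for most $\gamma$ each interval $[t_i^\epsilon,t_{i+1}^\epsilon]$ carries at most one jump, then restricts $\pi$ to each interval and further partitions by the pair $(j,k)$ of approximate endpoint values; each piece is reduced to the two-valued simple map $u_{j,k}=\alpha_j\nchi_{E_j}+\alpha_k\nchi_{\XX\setminus E_j}$, to which the real-valued Ambrosio--Di Marino inequality and Lemma~\ref{cdscascs} are applied. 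Your argument instead works directly on the \emph{spatial} chamber decomposition $\bigsqcup_S\Gamma_S$ of $\bigcup_i\partial^*E_i$, counts jumps via the identity $2N_{S,B}(\gamma)=\sum_{i\in S}|\DIFF(\nchi_{E_i}\circ\gamma)|(\gamma^{-1}(\Gamma_S\cap B))$, and controls the combinatorics with Remark~\ref{rem:S max} and the lower bound of Lemma~\ref{cdscascs}. Your route is shorter and avoids the three-layer restriction of $\pi$, but it rests on a property you use tacitly and should state: for $\pi$-a.e.\ $\gamma$ the pushforward $\gamma_*|\DIFF(\nchi_{E_i}\circ\gamma)|$ is concentrated on $\partial^*E_i$, so that every jump time of $v\circ\gamma$ lands in some $\Gamma_S$ and the adjacent indices $i(t\pm)$ indeed belong to $S$. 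This concentration does follow from the real-valued test-plan inequality applied to $\nchi_{E_i}$ evaluated at $B=\XX\setminus\partial^*E_i$, but it is the crux of your argument and deserves an explicit line. A second small gap is in the limiting step: the simple-map estimate is proved for globally simple maps in $\BV(\XX,\YY_{\bar y})$, whereas your $u_n$ are only locally simple on $U$; you need the cut-off of Proposition~\ref{prop:cut off} (as the paper does) to replace $u_n$ by globally defined simple maps agreeing with $u_n$ on a neighbourhood of the compact support of the bounded test plan. With these two points made explicit the proof closes.
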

\begin{proof}	
We subdivide the proof into two steps.
\medskip\\\textbf{Reduction to simple maps}. We  claim that it is enough to prove for all $u \in \SS(\XX,\YY) \cap \BV(\XX,\YY_{\bar y})$ that
\begin{equation}
		\int \gamma_*|\DIFF (u\circ \gamma)|\dd\pi(\gamma)\le {\mathrm {Comp}}(\pi) {\mathrm {Lip}}(\pi) C|\DIFF u|,\label{csdcsa}
\end{equation}
for every $\infty$-test plan and for a suitable constant $C>0$ depending only on the PI-parameters. Indeed, let $u \in \BV^*(\XX,\YY_{\bar y})$ be arbitrary and, by definition of $V_u$, consider $(u_n)\subseteq \SS_{\rm loc}(\XX,\YY)$ be optimal for the definition of $V_u(\XX)$, i.e.\ satisfying
\[
\rho(u_n,u)\to 0 \text{ in }L^1_{\rm loc}(\XX),\qquad \text{and}\qquad |\DIFF u_n| \to V_u,
\]
in duality with continuous and bounded functions. Let $\pi$ be an arbitrary \emph{bounded} $\infty$-test plan so that, denoting by $K$ the closure of $\{\gamma_t \colon \gamma \in \text{supp}(\pi),t\in[0,1]\} $, we have $K$ is bounded, hence compact. Since $u_n \in \BV_{\rm loc}(\XX,\YY)$ eventually for $n$ large, we can consider, via a cut-off argument as in Proposition \ref{prop:cut off}, maps $w_n \in \BV(\XX,\YY_{\bar y})$ agreeing with $u_n$ on a common bounded open neighborhood $\Omega \supset K$ for every $n \in\NN$ (discarding countably many negligible sets in $(0,r_0)$ and picking a common $\Omega_r$ in Proposition \ref{prop:cut off}). This guarantees also that $w_n \in \SS(\XX,\YY)$, as obtained by a cut-off with constant extension of a locally simple map in $\XX$. Thus,  by the assumption of this step, \eqref{csdcsa} can be written for each $w_n$ so that, by locality on $\Omega$ ($\pi$ is supported on curves living in $\Omega$)
\[
\iint \varphi \circ \gamma_t \, \dd |\DIFF (u_n\circ \gamma)|(t)\dd\pi(\gamma)\le {\mathrm {Comp}}(\pi) {\mathrm {Lip}}(\pi) C\int \varphi \, \dd |\DIFF u_n|,
\]
for every $\varphi \in C_b(\XX)$ and $n \in\NN$. Since $V_u(\XX)<\infty$ and $\rho(u_n,u) \to 0$ in $L^1(\Omega)$, by lower-semicontinuity \eqref{eq:BV curve lsc}, Fatou's lemma and arbitrariness of $\varphi\ge 0$, we get for $\pi$-a.e.\ $\gamma$ that $u\circ \gamma \in \BV([0,1],\YY)$  and 
\[
\int \gamma_* |\DIFF (u\circ \gamma)|\dd\pi(\gamma)\le {\mathrm {Comp}}(\pi) {\mathrm {Lip}}(\pi) C V_u.
\]
Recalling Remark \ref{rem:weak BV on bounded plans}, the above then holds for every $\infty$-test plan $\pi$ giving the conclusion.
\medskip\\\textbf{Proof for simple maps}. We thus only need to prove that for every $u \in \SS(\XX,\YY) \cap \BV(\XX,\YY_{\bar y})$, \eqref{csdcsa}
holds for every $\infty$-test plan $\pi$. Let $u= \sum_{i=1}^m\alpha_i\nchi_{E_i}$ for $(\alpha_i)\subseteq\YY$ and $(E_i)$ partition and $\pi$ be given. Notice that, for $\pi$-a.e.\ $\gamma$, we have that $u\circ \gamma$ is a simple curve taking values in $(\alpha_i)_i\subseteq\YY$ and, since $\rho(u, \alpha_i) - \rho(\alpha_i,\bar y) \in \BV(\XX) $, by the equivalent notion via test plan \cite{AmbrosioDiMarino14},  we have $\rho(u, \alpha_i)\circ \gamma \in \BV([0,1])$. In particular,
$$|\DIFF (u\circ \gamma)|\le |\DIFF\rho(u\circ\gamma(\,\cdot\,),\alpha_1)|\vee\dots\vee|\DIFF\rho(u\circ\gamma(\,\cdot\,),\alpha_m)|$$
so that $|\DIFF u|_w$ is a finite measure. We claim that the set $$
	\{(\gamma,t)\in  C([0,1],\XX) \times (0,1):t\in J_{u\circ\gamma}\}
	$$ is $(\pi\otimes\mathcal L^1)$-measurable. In order to show the claim, we can assume with no loss of generality that $u$ is real valued, as
 $$
 J_{u\circ \gamma}=\bigcup_\varphi J_{\varphi\circ u\circ \gamma},
 $$
 where the union is taken for $\varphi$ in a suitable  subfamily of $\LIP(\YY)$.
 Notice that it is enough to prove that for every $r\in(0,1)$, the map 
	 \begin{equation}\label{eqqqq1}
	 C([0,1],\XX) \times(0,1)\rightarrow [0,+\infty]\qquad\text{defined as}\qquad(\gamma,t)\mapsto |\DIFF(u\circ \gamma)|((t-r)\vee0,(t+r)\wedge 1)
	\end{equation}
	is $(\pi\otimes\mathcal L^1)$-measurable. 
 Consider first the map 
 \begin{equation}\label{eqqqq2}
     	 C([0,1],\XX)\rightarrow L^1((0,1),\YY)\qquad\text{defined as}\qquad\gamma\mapsto u\circ \gamma.
 \end{equation}
This map  is measurable, as it can be easily checked by approximation of $u$ with Lipschitz functions (and for Lipschitz functions, such map is continous).
Now, if   $\Gamma_n\subseteq C([0,1],\XX)$ is a set on which the map in \eqref{eqqqq2} is continuous, the map in \eqref{eqqqq1} is lower semicontinuous on $\Gamma_n\times (0,1)$, so that the claim is proved, taking into account Lusin's Theorem.

	By the discussion above and by Fubini's Theorem, for $\mathcal{L}^1$-a.e.\ $t\in (0,1)$, $t \notin J_{u\circ \gamma}$ for $\pi$-a.e.\ $\gamma$. Hence for every $\epsilon\in(0,1)$ we can find a partition $\mathcal{P}^\epsilon=\{0=t^\epsilon_0<t^\epsilon_1<\dots <t^\epsilon_{n^\epsilon}=1\}$ such that for a set $G^\epsilon$ with $\pi(C([0,1],\XX) \setminus G^\epsilon)<\epsilon$, for every curve $\gamma\in G^\epsilon$, $J_{u\circ\gamma}\cap\{t^\epsilon_0,t_i^\epsilon,\dots,t_{n^\epsilon}^\epsilon\}=\varnothing$ and $\#J_{u\circ\gamma}\cap [t^\epsilon_i,t^\epsilon_{i+1}]\le 1$ for every $i=0,\dots,n^\epsilon-1$.	
	Now, for $\epsilon\in(0,1)$ and $i=0,\dots,n^\epsilon-1$  we define 
	$$
	\pi^\epsilon_i\defeq \frac{1}{\pi(G^\epsilon)}\big({{\mathrm {restr}}}_{t^\epsilon_i}^{t^\epsilon_{i+1}}\big)_* (\pi\mres G^\epsilon).
	$$
	Define also, for ${j,k}\in\{1,\dots,m\}$,
	$$G_{j,k}\defeq\Big\{\gamma \in C([0,1],\XX) :{\mathrm{ap \, lim}}_{t\searrow  0} u\circ\gamma(t)=\alpha_j, \mathrm{ap\, lim}_{t\nearrow 1} u\circ\gamma(t)=\alpha_k\Big\},$$
	(the approximate limits are well defined being the map $u\circ \gamma$ a simple curve of bounded variation) and (when $\pi_i^\epsilon(G_{j,k})=0$ we understand $\pi^\epsilon_{i,j,k}=0$) $$\pi_{i,j,k}^\epsilon\defeq \frac{1}{\pi_i^\epsilon (G_{j,k})} (\pi_i^\epsilon\mres G_{j,k}).$$
	For $\epsilon\in(0,1)$, $i=0,\dots,n^\epsilon-1$ and ${j,k}\in\{1,\dots,m\}$, define
	$u_{j,k}\defeq \alpha_j\chi_{E_j}+\alpha_k\chi_{\XX\setminus E_j}$. Notice that for $\pi^\epsilon_{i,j,k}$-a.e.\ $\gamma$, $u\circ\gamma=u_{j,k}\circ \gamma$ a.e.
	Then, by the equivalent characterization of $\BV(\XX)$ of \cite{AmbrosioDiMarino14}, we deduce (for $C>0$ depending only on the PI-parameters given by Lemma \ref{cdscascs})
	\begin{align*}
		\int \gamma_* |\DIFF(u\circ\gamma)| \dd {\pi_{i,j,k}^\epsilon}(\gamma)&=\int \gamma_* |\DIFF(u_{j,k}\circ\gamma)| \dd {\pi_{i,j,k}^\epsilon}(\gamma)=\int \gamma_* |\DIFF\rho(u_{j,k}\circ\gamma(\,\cdot\,),\alpha_j)| \dd {\pi_{i,j,k}^\epsilon}(\gamma)\\&\le {\mathrm {Comp}}({\pi_{i,j,k}^\epsilon}) {\mathrm {LIP}}({\pi_{i,j,k}^\epsilon})|\DIFF u_{j,k}|
		\\&\le \frac{C}{\pi_i^\epsilon(G_{j,k})\pi(G^\epsilon)}{\mathrm {Comp}}({\pi}) {\mathrm (t_{i+1}^\epsilon-t_i^\epsilon){\mathrm{LIP}}}(\pi) \rho(\alpha_j,\alpha_k)\HH^h\mres \partial^*{E_j}.
	\end{align*}
Similarly,
$$
\int \gamma_* |\DIFF(u\circ\gamma)| \dd {\pi_{i,j,k}^\epsilon}(\gamma)\le \frac{C}{\pi^\epsilon(G_{j,k})\pi(G^\epsilon)}{\mathrm {Comp}}({\pi}) {\mathrm (t_{i+1}^\epsilon-t_i^\epsilon){\mathrm{LIP}}}(\pi) \rho(\alpha_j,\alpha_k)\HH^h\mres \partial^*{E_k},
$$
so that
$$
\int \gamma_* |\DIFF(u\circ\gamma)| \dd {\pi_{i,j,k}^\epsilon}(\gamma)\le \frac{C\,\mathrm{LIP}(\pi)}{\pi_i^\epsilon(G_{j,k})\pi(G^\epsilon)}{\mathrm {Comp}}({\pi}) \mathrm (t_{i+1}^\epsilon-t_i^\epsilon)\rho(\alpha_j,\alpha_k)\HH^h\mres (\partial^*{E_j}\cap \partial^*E_k).
$$
Therefore, for every $\epsilon\in (0,1)$ and $i=0,\dots, n^\epsilon-1$, 
\begin{align*}
	&\int\gamma_* |\DIFF(u\circ\gamma)| \dd({{\mathrm {restr}}}[t^\epsilon_i,t^\epsilon_{i+1}])_*(\pi\mres G^\epsilon)(\gamma) =\pi(G^\epsilon)\int\gamma_* |\DIFF(u\circ\gamma)| \dd\pi^\epsilon_i(\gamma)\\&\qquad={\pi (G^\epsilon)}\sum_{j,k}{\pi_i^\epsilon (G^\epsilon_{i,j,k})}\int \gamma_* |\DIFF(u\circ\gamma)| \dd\pi^\epsilon_{i,j,k}(\gamma)
	\\&\qquad\le C\sum_{j,k} {\mathrm {Comp}}({\pi}) {\mathrm (t_{i+1}^\epsilon-t_i^\epsilon){\mathrm{LIP}}}(\pi) \rho(\alpha_j,\alpha_k)\HH^h\mres (\partial^*{E_j}\cap\partial^* E_k).
\end{align*}
Now, summing over $i=0,\dots,n^\epsilon-1$, we reach 
\begin{align*}
	\int_{G^\epsilon}\gamma_* |\DIFF(u\circ\gamma)| \dd \pi(\gamma)&\le C {\mathrm {Comp}}({\pi}) {\mathrm{LIP}}(\pi) \sum_{j,k}\rho(\alpha_j,\alpha_k)\HH^h\mres (\partial^*{E_j}\cap\partial^* E_k)\\&\le {\mathrm {Comp}}({\pi}) {\mathrm{LIP}}(\pi)  C|\DIFF u|,
\end{align*}
having used, in the last inequality, Lemma \ref{cdscascs}. Then \eqref{csdcsa} follows letting $\epsilon\searrow 0$ and the proof is concluded. 
\end{proof}
%
%
%
%
%
%

%
%
\end{document}